\newcommand{\addresseshere}{%
  \enddoc@text\let\enddoc@text\relax
}
\numberwithin{equation}{section}
\newcommand{\Mod}[1]{\ (\mathrm{mod}\ #1)}
\newcommand{\stirling}[2]{\genfrac{[}{]}{0pt}{}{#1}{#2}}
\newcommand{\stirlingb}[2]{\genfrac{\{}{\}}{0pt}{}{#1}{#2}}
\newtheorem {theorem}{Theorem}[section]
\newtheorem {proposition}[theorem]{Proposition}
\newtheorem {lemma}[theorem]{Lemma}
\theoremstyle{definition}
\theoremstyle{remark}
\newtheorem {remark}[theorem]{Remark}
\newcommand{\Z}{\mathbb{Z}}
\DeclareMathOperator*{\cotanh}{cotanh}
\def\ba{\begin{array}}
\def\ea{\end{array}}
\def\bea{\begin{eqnarray} \label}
\def\eea{\end{eqnarray}}
\def\be{\begin{equation} \label}
\def\ee{\end{equation}}
\def\bit{\begin{itemize}}
\def\eit{\end{itemize}}
\def\ben{\begin{enumerate}}
\def\een{\end{enumerate}}
\def\E{\mathbb{E}}
\def\II{\mathbb{I}}
\def\JJ{\mathbb{J}}
\def\N{\mathbb{N}}
\def\P{\mathbb{P}}
\def\R{\mathbb{R}}
\def\RRd1{\mathbb{R}^{d+1}}
\def\bS{\mathbb{S}}
\def\b{\beta}
\def\n{\eta}
\def\r{\varrho}
\def\G{\Gamma}
\def\dint{\textup{d}}
\newcommand{\eee}{{\rm e}}
\newcommand{\ind}{\mathbbm{1}}
\newcommand{\eps}{\varepsilon}
\newcommand{\pos}{\mathop{\mathrm{pos}}\nolimits}
\newcommand{\conv}{\mathop{\mathrm{conv}}\nolimits}
\newcommand{\dd}{{\rm d}}
\begin{document}

\title[Expected $f$-vector of the Poisson Zero Polytope]{Expected $f$-vector of the Poisson Zero Polytope and Random Convex Hulls in the Half-Sphere}

\author{Zakhar Kabluchko}
\address{Zakhar Kabluchko: Institut f\"ur Mathematische Stochastik,
Westf\"alische Wilhelms-Universit\"at M\"unster,
Orl\'eans-Ring 10,
48149 M\"unster, Germany}
\email{zakhar.kabluchko@uni-muenster.de}

\date{}

\begin{abstract}
We prove an explicit combinatorial formula for the expected number of faces of the zero polytope of the homogeneous and isotropic Poisson hyperplane tessellation in $\R^d$. The expected $f$-vector is expressed through the coefficients of  the polynomial
$$
(1+ (d-1)^2x^2) (1+(d-3)^2 x^2) (1+(d-5)^2 x^2) \ldots.
$$
Also, we compute explicitly the expected $f$-vector and the expected volume of the spherical convex hull of $n$  random points  sampled uniformly and independently from the $d$-dimensional half-sphere. In the case when $n=d+2$, we compute the probability that this spherical convex hull is a spherical simplex, thus solving the half-sphere analogue of  the Sylvester four-point problem.
\end{abstract}

\subjclass[2010]{Primary: 52A22, 60D05; Secondary: 52B11,  52A20, 51M20, 52A55}
\keywords{Poisson hyperplane tessellation, Poisson zero polytope, Crofton polytope, $f$-vector, random polytope, random cone,  Stirling numbers, conic intrinsic volumes, internal and external angles, beta' polytope, convex hulls on the half-sphere, Sylvester four-point problem}

\maketitle

\section{Main results}\label{sec:results}

\subsection{Poisson zero polytope}
Poisson hyperplane processes and the corresponding random tessellations of the Euclidean space by polytopes have been extensively studied in stochastic geometry since the works of Miles~\cite{miles_thesis,miles_polygons,miles_flats1,miles_synopsis,miles_flats2,miles_aggregates} and Matheron~\cite{matheron72,matheron74,matheron_book}; see Section~4.4 and Chapter~10 of the book by Schneider and Weil~\cite{SW08} for more information and references,  as well as~\cite{goudsmith,richards,santalo,santalo_yanez} for early contributions.
Stationary Poisson hyperplane tessellations give rise to (at least) two natural random polytopes: \textit{the Poisson zero polytope} (defined as the a.s.\ unique polytope of the tessellation containing the origin) and the \textit{typical Poisson polytope} (defined essentially as a polytope picked uniformly at random from the set of polytopes of the tessellation contained in some very large observation window). One of the most interesting characteristics of a random polytope is its expected $f$-vector whose $k$-th component is, by definition, the expected number of $k$-dimensional faces of the polytope. While it is well known that the expected $f$-vector of the typical Poisson polytope coincides  with the $f$-vector of the cube of the same dimension (see, for example, Theorems 10.3.1 and 10.3.2 in~\cite{SW08}), an exact result for the zero polytope is missing.
In the present paper we close this gap by providing an explicit formula for the expected $f$-vector of the zero polytope of the isotropic and homogeneous Poisson hyperplane tessellation on $\R^d$.

Let us recall the definitions of the Poisson hyperplane process and the Poisson zero polytope. Denote by $\|\cdot\|$ and $\langle \cdot, \cdot \rangle$ the Euclidean norm and the standard scalar product on $\R^d$, respectively.  Let $\bS^{d-1}= \{x\in\R^d\colon \|x\| = 1\}$ be the unit sphere in $\R^d$. Let $A(d,d-1)$ be the Grassmannian manifold of all affine hyperplanes in $\R^d$. Every affine hyperplane $H\in A(d,d-1)$ can be represented in the form
$$
H = H(w,\tau) := \{x\in\R^d\colon \langle x,w\rangle = \tau \}
$$
with some ``direction'' $w\in \bS^{d-1}$ and some (possibly negative) ``distance'' $\tau \in\R$.  In fact, $H(w,\tau) = H(-w,-\tau)$, and every hyperplane not passing through the origin has exactly two such representations.
A homogeneous \textit{Poisson hyperplane process} with intensity $\gamma>0$ is a random, countable collection of affine hyperplanes $X=\{H(W_i,T_i)\}_{i\in\Z}$, where
\begin{itemize}
\item [(a)] $\{T_i\}_{i\in\Z}$ are the arrivals of a homogeneous, intensity $\gamma$ Poisson process on the real line;
\item[(b)] $\{W_i\}_{i\in\Z}$ are independent, identically distributed  random vectors with certain centrally symmetric probability distribution $\mu$ on the unit sphere $\bS^{d-1}$;
\item[(c)] $\{T_i\}_{i\in\Z}$ is independent of $\{W_i\}_{i\in\Z}$.
\end{itemize}
Equivalently, we can view the Poisson hyperplane process $X$ as  a Poisson point process on $A(d,d-1)$ whose intensity measure $\Theta$ is given by
$$
\Theta(A) := \gamma \int_{\bS^{d-1}} \left(\int_{-\infty}^{+\infty} \ind_{\{H(w,\tau) \in A\}} \dd \tau \right)\mu (\dd w),
$$
for all Borel sets $A\subset A(d,d-1)$; see~\cite[Section~4.4]{SW08}. In the present paper, we restrict our attention to the \textit{isotropic} case meaning that the direction measure $\mu$ is chosen to be the uniform probability distribution on $\bS^{d-1}$. Without restriction of generality, we may choose $\gamma:=1$.  It is known that $X$ consists of countably many random affine hyperplanes whose probability law is invariant with respect to the natural action of the isometry group of $\R^d$ on the set of hyperplanes $A(d,d-1)$.  The hyperplanes of the Poisson hyperplane process $X$ dissect $\R^d$ into countably many polytopes; see the left panel of Figure~\ref{fig:poisson} for a realization when $d=2$.  The \textit{Poisson zero polytope} or the \textit{Crofton polytope} is the a.s.\ unique polytope of this tessellation that contains the origin.

\begin{figure}[t]
\begin{center}
\includegraphics[width=0.49\textwidth ]{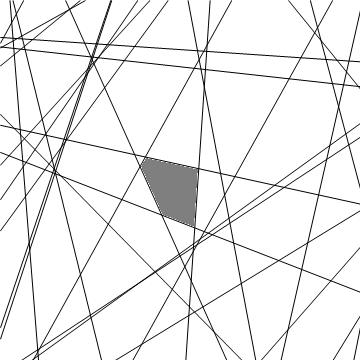}
\includegraphics[width=0.49\textwidth]{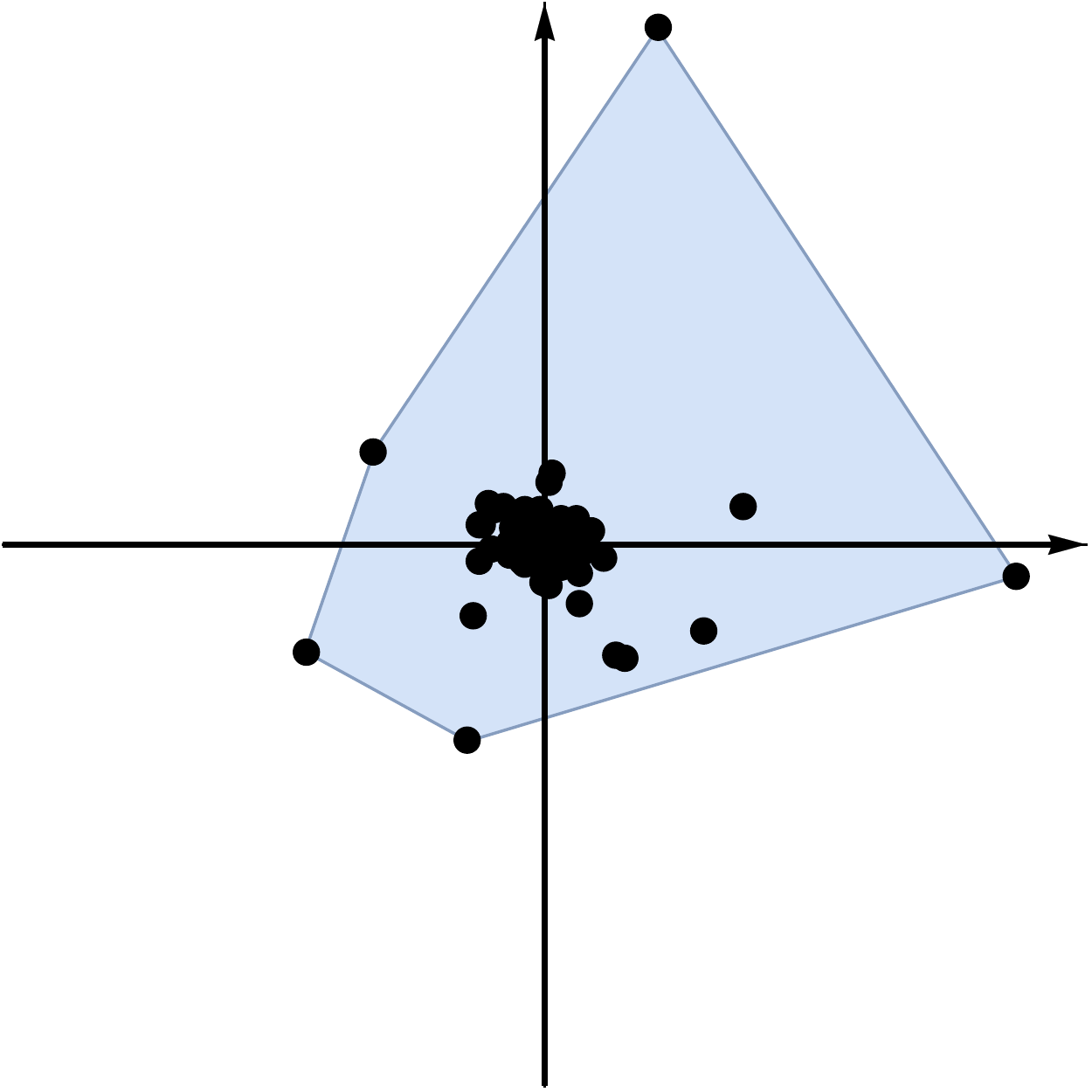}
\end{center}
\caption{
Left: The Poisson line tessellation in the plane, together with the zero polygone.
Right: The dual Poisson point process $\Pi_{2,1}$ on $\R^2$ with intensity $\|x\|^{-3}$, together with its convex hull. The lines of the tessellation correspond to the points of the Poisson process via  projective duality.
}
\label{fig:poisson}
\end{figure}

\subsection{Statement of the main result}\label{subsec:statement_main}
The number of $k$-dimensional faces of a $d$-dimensional polytope $P$ is denoted by $f_k(P)$, for $k\in \{0,1,\ldots,d\}$. For example, $f_0(P)$ is the number of vertices, $f_1(P)$ is the number of edges, $f_{d-1}(P)$ is the number of $(d-1)$-dimensional faces (called \textit{facets}), and $f_d(P)=1$. The \textit{$f$-vector} of the polytope $P$ is defined by $\mathbf f(P):= (f_0(P),\ldots, f_{d-1}(P))$.
The main result of the present paper is the following formula for the expected $f$-vector of the Poisson zero polytope.

\begin{theorem}\label{theo:main}
Let $\mathcal Z_d$ be the $d$-dimensional Poisson zero polytope with $d\in\N$. Then, for all $\ell\in \{0,\ldots,d\}$ we have
\begin{equation}\label{eq:theo:main}
\E f_\ell(\mathcal Z_d) = \frac{\pi^{d-\ell}}{(d-\ell)!} A[d,d-l],
\end{equation}
where $A[n,k]$, $n\in\N_0$, $k\in \Z$, is an array of numbers defined as follows. Consider a sequence of polynomials given by $Q_0(x) = Q_1(x) = 1$ and, for $n\in\{2,3,\ldots\}$,
\begin{align}
Q_n(x)
&= \prod_{\substack{j\in \{1,\ldots,n-1\} \\ j \not\equiv n \Mod{2}}} (1+ j^2 x^2) \label{eq:def_Q_n}\\
&=
\begin{cases}
(1+ (n-1)^2x^2) (1+(n-3)^2 x^2)\ldots (1 + 3^2x^2) (1 + 1^2 x^2),&\text{ if $n$ is even,}\\
(1+ (n-1)^2x^2) (1+(n-3)^2 x^2)\ldots (1+ 4^2 x^2) (1+ 2^2 x^2),&\text{ if $n$ is odd}.
\end{cases}
\notag
\end{align}
Then,
\begin{equation}\label{eq:def_A_n_k}
A[n,k] =
\begin{cases}
[x^k]  Q_n(x), &\text{ if $k$ is even},\\
[x^k] \left(\tanh \left(\frac{\pi}{2x}\right)  \cdot  Q_n(x)\right), &\text{ if $k$ is odd and $n$ is even},\\
[x^k] \left(\cotanh \left(\frac{\pi}{2x}\right) \cdot Q_n(x)\right), &\text{ if $k$ is odd and $n$ is odd}.
\end{cases}
\end{equation}
Here, $[x^k] H(x)$ denotes the coefficient of $x^k$ in $H(x)$, a formal power series in positive and negative powers of $x$.
\end{theorem}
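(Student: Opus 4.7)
The plan is to reduce the expected face count to a spherical-angle integral via projective duality and integral geometry, then to evaluate this integral through a generating-function identity that matches the formal Laurent expansion in~\eqref{eq:def_A_n_k}. First I would use the projective duality shown in the right panel of Figure~\ref{fig:poisson}: the isotropic Poisson hyperplane process is equivalent to a Poisson point process $\Pi$ on $\R^d$ of intensity proportional to $\|y\|^{-(d+1)}$, and the zero cell $\mathcal{Z}_d$ is the polar dual of $\conv(\Pi)$. Since polar duality inverts the face lattice, $\E f_\ell(\mathcal Z_d) = \E f_{d-1-\ell}(\conv(\Pi))$, and the task becomes counting faces of this convex hull.

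To count $(d-1-\ell)$-faces of $\conv(\Pi)$ I would apply the multivariate Mecke formula combined with the affine Blaschke--Petkantschin formula. Mecke writes the expected face count as a $(d-\ell)$-fold integral of a Poisson void probability (``no further point of $\Pi$ lies in the outer half-space'') against the product of single-point intensities. Blaschke--Petkantschin then decomposes the $(d-\ell)$-tuple of points into an affine $(d-\ell-1)$-flat $E$ together with the angular data of the $d-\ell$ points on $E$, contributing a Jacobian equal to the $(d-\ell)$-volume of the simplex spanned by the radial directions. The scaling invariance of the intensity $\|y\|^{-(d+1)}$ makes the void probability a simple exponential in the distance of $E$ from the origin; integrating out the radial direction and averaging isotropically over the orientation of $E$ produces the universal prefactor $\pi^{d-\ell}/(d-\ell)!$ in~\eqref{eq:theo:main}.

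The crucial step is then to identify the remaining angular integral with the combinatorial coefficient $A[d,d-\ell]$. The integrand can be recognised as a sum of expected internal angles of the spherical convex hull of $d-\ell$ independent uniform points on the half-sphere $\SS^{d-1}_+$ --- precisely the object analysed in parallel in the second half of the paper. To match this angle sum with coefficients of $Q_d(x)$, possibly corrected by $\tanh(\pi/(2x))$ or $\cotanh(\pi/(2x))$, I would invoke the Mittag--Leffler expansions
\[\tanh\!\left(\frac{\pi}{2x}\right) = \sum_{n\ge 1}\frac{4x/\pi}{1+(2n-1)^2 x^2}, \qquad \cotanh\!\left(\frac{\pi}{2x}\right) = \frac{2x}{\pi}+\sum_{n\ge 1}\frac{4x/\pi}{1+(2n)^2 x^2}.\]
The poles of $\tanh(\pi/(2x))$ sit at the reciprocals of the odd integers, coinciding with the zeros of $Q_d(x)$ for \emph{even} $d$; the poles of $\cotanh(\pi/(2x))$ sit at the reciprocals of the even integers, coinciding with the zeros of $Q_d(x)$ for \emph{odd} $d$. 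This cancellation is what makes the formal Laurent series in~\eqref{eq:def_A_n_k} yield a genuine polynomial in $x$ whose coefficients are the expected face numbers.

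The identification can then be completed by induction on $d$, using the algebraic recursion $Q_d(x) = (1+(d-1)^2 x^2)\,Q_{d-2}(x)$ and a matching geometric dimensional descent (for instance, conditioning on a fixed hyperplane of the process and reducing to a $(d-2)$-dimensional zero-cell problem on a generic lower-dimensional section). The main obstacle is the clean bookkeeping of the two parity cases and of the switch between $\tanh$ and $\cotanh$ at each inductive step: tracking the contribution of each half-integer pole through the angular integration, and verifying that the resulting residue calculation produces precisely the Stirling-type sums $[x^k]Q_d(x)$, is the delicate combinatorial heart of the argument.
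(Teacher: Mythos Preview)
Your outline captures the correct starting point --- the polar duality between $\mathcal Z_d$ and $\conv \Pi_{d,1}$ and the relevance of internal angles of half-sphere simplices --- but the step you label ``crucial'', where the ``remaining angular integral'' is to be identified with $A[d,d-\ell]$, is precisely where all the work lies, and your proposal supplies no mechanism for it. In the paper, what you call a single angular integral is in fact a sum over $n=d,d-2,d-4,\ldots$ of expected internal angle sums $\tilde\JJ_{n,k+1}(n/2)$ of beta' simplices (formula~\eqref{eq:lim_E_f_k_C_n}), and each of these is a genuinely nontrivial quantity. The paper computes them by showing that they satisfy a triangular linear system coming from the Gauss--Bonnet relation for conic intrinsic volumes of tangent cones (Proposition~\ref{prop:relations}), and then checks that the guessed values~\eqref{eq:internal_explicit} solve this system; this last check is a combinatorial identity (Lemma~\ref{lemma:identity_A_B_1}) proved by induction using the dual recurrences~\eqref{eq:rel_A_n_k_intro} and~\eqref{eq:rel_B_n_k_intro}. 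Your Mecke/Blaschke--Petkantschin reduction can at best reproduce the starting formula~\eqref{eq:lim_E_f_k_C_n}, not bypass this linear-algebraic core.

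There is also a structural gap in how you treat the two parities. In the paper the $\tanh$ and $\cotanh$ factors do \emph{not} arise from Mittag--Leffler expansions or pole cancellation; they appear only in the odd-codimension case and are produced by the Dehn--Sommerville relations for the simple polytope $\mathcal Z_d$ (Section~\ref{subsec:odd_codim}). The even-codimension values are established first, and then Dehn--Sommerville, rewritten as a generating-function identity in $e^{\pi x}$, forces the odd values and delivers the factor $(\eee^{\pi/x}-1)/(\eee^{\pi/x}+1)=\tanh(\pi/(2x))$ (respectively its reciprocal) automatically. Your pole-matching observation is correct as an a posteriori consistency check, but it is not a route to the coefficients. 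Finally, the proposed induction on $d$ via ``conditioning on a fixed hyperplane and reducing to a $(d-2)$-dimensional zero-cell'' is not a valid reduction: sectioning gives a $(d-1)$-dimensional zero cell, not $(d-2)$, and in any case there is no direct relation between the $f$-vectors of $\mathcal Z_d$ and of its hyperplane sections that would match the recursion $Q_d(x)=(1+(d-1)^2x^2)Q_{d-2}(x)$.
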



Let us list some properties of the numbers $A[n,k]$ that follow from~\eqref{eq:def_A_n_k} and~\eqref{eq:def_Q_n}. First of all,  $A[n,k]=0$ if $k>n$, so let $k\leq n$ in the following.  By definition, $A[n,k]$ is an integer number if $k$ is even. These values are ``nice'', whereas the values with odd $k$ are ``ugly''.  Specifically,  in the case when $k\geq 1$ is odd, $A[n,k]$ is a polynomial of $\pi^2$ with rational coefficients, if $n$ is even, and $1/\pi$ times such a polynomial, if $n$ is odd.
For $k=0$ we trivially have $A[n,0] = 1$ for all $n\in\N_0$. Even though~\eqref{eq:theo:main} involves $A[n,k]$'s with $k\geq 0$ only, the negative values of $k$ are also of interest since $A[n,-1]$ will appear in Theorems~\ref{theo:spherical_polytope_f_vector}, \ref{theo:expected_angle} and~\ref{theo:sylvester}, below.
If $k\leq -1$ is odd, then $A[n,k]$ is a polynomial in odd powers of $\pi$. If $k\leq -2$ is even, then $A[n,k]=0$.
As a consequence of these observations and~\eqref{eq:theo:main},  the expected face numbers $\E f_\ell(\mathcal Z_d)$ are rational multiples of powers of $\pi^2$ if the codimension $d-\ell$ is even (the ``nice'' case),  or polynomials in $\pi^2$ with rational coefficients if the codimension $d-\ell$ is odd (the ``ugly'' case).
Explicit values of the expected $f$-vectors of $\mathcal Z_d$ in dimensions up to $d=10$ and the values of $A[n,k]$ can be found in
Tables~\ref{tab:f_vect_Poisson}, \ref{tab:A_n_k_0} and~\ref{tab:A_n_k} at the end of the present paper. 




As it turns out, the ``ugly'' values of $\E f_\ell(\mathcal Z_d)$ are determined uniquely by the ``nice'' values  and the Dehn-Sommerville relations which we are now going to recall.
The random polytope $\mathcal Z_d$ is \textit{simple} with probability $1$ (that is, each vertex of this polytope is adjacent to exactly $d$ edges (and also exactly $d$ facets). Equivalently,  the dual polytope of $\mathcal Z_d$ (which will be described explicitly in Section~\ref{subsec:convex_half_sphere}) is \textit{simplicial} with probability $1$, that is all of its facets (and, consequently, all faces) are simplices; see~\cite[Section~4.5]{GruenbaumBook} for a discussion of these classes of polytopes.  For a simple $d$-dimensional polytope $P$, the \textit{Dehn-Sommerville relations} (see, e.g.,\ \cite[Section~9.2]{GruenbaumBook}) state that
\begin{equation}\label{eq:dehn_sommerville_deterministic}
f_\ell(P) =  \sum_{i=0}^{\ell} (-1)^i \binom{d-i}{d-\ell} f_i(P),
\end{equation}
for all  $\ell\in \{0,\ldots,d\}$.
Applying them to $P:=\mathcal Z_d$ and taking the expectation, we arrive at the equations
\begin{equation}\label{eq:dehn_sommerville_Ef}
\E f_\ell(\mathcal Z_d) = \sum_{i=0}^{\ell} (-1)^i \binom{d-i}{d-\ell} \E f_i(\mathcal Z_d),
\end{equation}
for all $\ell\in \{0,\ldots,d\}$. As we shall explain below, these equations allow to express the ``ugly'' values of $\E f_\ell(\mathcal Z_d)$ with odd $d-\ell$ as linear combinations of the ``nice'' values with even $d-\ell$. The coefficients are essentially the Taylor coefficients of $\tanh$ and $\cotanh$, which can be expressed through Bernoulli numbers~\cite[\S 6.5]{graham_knuth_patashnik_book}.  The main difficulty is therefore to prove Theorem~\ref{theo:main} in the case when $d-\ell$ is even.



\subsection{Known results}\label{sec:related_results}
Previously, the values of $\E f_\ell (\mathcal Z_d)$ have been determined only in some special cases by the methods of stochastic and integral geometry, whereas their combinatorial structure described in Theorem~\ref{theo:main} remained unknown.
Specifically, it has been known, see~\cite{schneider_anisotropic} or~\cite[Theorem~10.4.9]{SW08},  that
\begin{equation}\label{eq:E_f_0_Poisson_Poly}
\E f_0(\mathcal Z_d) = \frac{d!}{2^d} \kappa_d^2 = \frac{\kappa_d}{\kappa_{d-1}}\pi^{d-1},
\end{equation}
where $\kappa_d:= \pi^{d/2}/\Gamma(\frac d2+1)$ is the volume of the $d$-dimensional unit ball, and the equality of both expressions on the right-hand side follows from the Legendre duplication formula.
For $d=2$, this formula, which takes the form $\E f_0(\mathcal Z_2) = \pi^2/2$, can be found in the work of R\'enyi and Sulanke~\cite{renyi_sulanke_ringgebiet}. For $d>2$, the formula has been derived by Sulanke and Wintgen~\cite{sulanke_wintgen} with a contribution by Schmidt~\cite{schmidt_some_results}. Finally, a formula for $\E f_0(\mathcal Z_d)$ in the case of a not necessarily isotropic Poisson hyperplane tessellation has been obtained by Schneider~\cite{schneider_anisotropic} who also proved that the maximum of the expected number of vertices is attained if the direction distribution of the hyperplanes is isotropic. Actually, these papers considered random polytopes defined as intersections of  finitely many random half-spaces, and the Poisson zero cell appears as the limit of this model as the number of half-spaces goes to infinity.

Equation~\eqref{eq:E_f_0_Poisson_Poly} also yields a formula for $\E f_1(\mathcal Z_d)$ via the a.s.\ relation $2 f_1(\mathcal Z_d) = d f_0(\mathcal Z_d)$ that is valid for every simple polytope. Finally, it has been known that
\begin{equation}\label{eq:E_f_d-2_Poisson_Poly}
\E f_{d-2} (\mathcal Z_d) = \frac{1}{2} \binom {d+1}{3} \pi^2,
\end{equation}
see~\cite[Equation~(1.16)]{beta_polytopes}, where it is explained how this can be derived from a result of~\cite{barany_etal}. All these results are consistent with Theorem~\ref{theo:main}.
It seems that no formulae for $\E f_k(\mathcal Z_d)$ have been known for $k\notin \{0,1,d-2,d\}$.  Asymptotic properties of the expected $f$-vector of the Poisson zero polytope (and some more general random polytopes), as $d\to\infty$, have been studied in~\cite{HoermannHugReitznerThaele}.  Some refinements of these results were obtained in~\cite[Section~1.7]{beta_polytopes}, and it should be possible to obtain even more refined results using the exact formula stated in Theorem~\ref{theo:main}.

Let us also mention that the expected intrinsic volumes of the Poisson zero polytope can be expressed through its expected face numbers~\cite[p.~693]{SchneiderWeightedFaces}. Thus, Theorem~\ref{theo:main} also yields an explicit formula for the expected intrinsic volumes of $\mathcal Z_d$.
In fact,  for all $d\in \N$ and $\ell\in\{0,\ldots,d\}$, the expected $\ell$-th intrinsic volume of the zero cell of the stationary and isotropic Poisson hyperplane tessellation in $\R^d$ with intensity $\gamma>0$ is given by
\begin{align*}
\E V_\ell(\mathcal Z_d)
= \left(\frac {2\pi}{\gamma}\right)^\ell
\left(\frac {\Gamma(\frac{d+1}2)}{\Gamma(\frac d2)}\right)^\ell
\frac{\Gamma(\frac \ell 2+1)}{\ell!}
A[d,\ell].
\end{align*}
The details of the calculation leading to this formula will be presented in~\cite{kabluchko_thaele_spherical_tess}, where also some related  facts (for example, a similar statement for the north pole cell of a spherical hyperplane tessellation) will be derived.

\subsection{Recurrence relations}\label{subsec:recurrence_relations}
It is easy to check that the numbers $A[n,k]$ satisfy the recurrence relation
\begin{equation}\label{eq:rel_A_n_k_intro}
A[n+2,k] - A[n,k] = (n+1)^2 A[n,k-2],
\end{equation}
see Lemma~\ref{lem:rec_first_kind}, below. In the proof of Theorem~\ref{theo:main}, an important role will be played by the numbers
\begin{equation}\label{eq:def_B_n_k_intro}
B\{n,k\} := \frac {1}{(k-1)!(n-k)!} \int_0^\pi (\sin x)^{k-1} x^{n-k} \dint x, \quad
k\in\{1,\ldots,n\},
\end{equation}
which will be shown to  satisfy the ``dual'' relation
\begin{equation}\label{eq:rel_B_n_k_intro}
B\{n,k-2\} - B\{n,k\} = (k-1)^2 B\{n+2,k\},
\end{equation}
see Lemma~\ref{lem:rec_second_kind}, below.
Formally, these relations transform into each other under the substitution $(n,k) \mapsto (-k,-n)$.
These properties bear some similarity to the well-known properties~\cite[\S 6.1]{graham_knuth_patashnik_book} of the Stirling numbers $\stirling{n}{k}$ and $\stirlingb{n}{k}$:
\begin{equation}\label{eq:stirling_relations}
\stirling{n+1}{k} - \stirling{n}{k-1} = n\stirling{n}{k},
\qquad
\stirlingb{n+1}{k} - \stirlingb{n}{k-1} = k\stirlingb{n}{k}.
\end{equation}
For even $k$ there is also certain similarity between the formula for $A[n,k]$  stated in~\eqref{eq:def_A_n_k} and~\eqref{eq:def_Q_n} and the following definition of the Stirling numbers of the first kind:
$$
\stirling{n}{k} = [x^k]  \left(x(x+1)\ldots(x+n-1)\right), \qquad n\in \N,\;\; k\in \{0,\ldots,n\}.
$$
It is also known that after extending the definition of the  Stirling numbers to negative arguments, the relation
\begin{equation}\label{eq:stirling_reciprocity}
\stirlingb{n}{k} = \stirling{-k}{-n}
\end{equation}
holds. It turns out that after an appropriate analytic continuation, a certain analogue of~\eqref{eq:stirling_reciprocity} holds for $A[n,k]$ and $B\{n,k\}$.  Since these issues are not directly related to the proof of Theorem~\ref{theo:main}, we shall discuss them elsewhere and in a more general context.  The analogy between the Stirling numbers and the arrays $A[n,k]$ and $B\{n,k\}$ explains  our notation with square and curly brackets.



It is natural to ask whether the array $A[n,k]$ is uniquely determined by the recurrence relations~\eqref{eq:rel_A_n_k_intro} together with some natural boundary conditions. It turns out that in order to enforce uniqueness it is necessary to add a certain Euler-type relation. This is stated in the following proposition.
\begin{proposition}\label{prop_A_n_k_algorithm}
The triangular array $A[n,k]$, where $n\in\N_0$ and $k\in \{0,\ldots,n\}$,  is uniquely determined by the following properties:
\begin{itemize}
\item[(i)] $A[n,0]=1$ for all $n\in\N_0$;
\item[(ii)] $A[n,n]= 2^{-n} (n!)^2 / \Gamma(\frac n2 +1)^2$ and $A[n,n-1] = \frac \pi 2 A[n,n]$ for all $n\in \N$;
\item[(iii)] $A[n,k] = A[n-2,k] + (n-1)^2 A[n-2,k-2]$ for all $n\geq 4$ and $k\in \{2,\ldots,n-2\}$;
\item[(iv)] $A[n,1] = \frac 1\pi((-1)^{n-1} + 1 + \sum_{k=2}^n (-1)^k (\pi^k/ k!) A[n,k])$ for all $n\geq 3$.
\end{itemize}
\end{proposition}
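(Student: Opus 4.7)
The plan is to verify that the array $A[n,k]$ defined by~\eqref{eq:def_Q_n}--\eqref{eq:def_A_n_k} satisfies properties (i)--(iv), and then argue uniqueness by a short induction on $n$.

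Properties (i), (ii) and (iii) are straightforward unpackings of the definition. Property (i) holds since $Q_n(0)=1$. For (iii), the factorisation $Q_n(x)=(1+(n-1)^2x^2)\,Q_{n-2}(x)$ is evident from~\eqref{eq:def_Q_n}; as $n$ and $n-2$ share the same parity, the prefactor $\tanh(\pi/(2x))$ or $\cotanh(\pi/(2x))$ (if any) in~\eqref{eq:def_A_n_k} is common to both rows and commutes through the $[x^k]$-extraction. Property (ii) is obtained by reading off the leading coefficient in the relevant Laurent expansion: for even $n$, this is the leading coefficient $((n-1)!!)^2$ of $Q_n$; for odd $n$, it is $(2/\pi)$ times the leading coefficient of $Q_n$, coming from the top term $(2x/\pi)$ of $\cotanh(\pi/(2x))$. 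In both cases the Legendre duplication formula collapses the resulting double factorials into $2^{-n}(n!)^2/\Gamma(n/2+1)^2$. An analogous inspection of the next-to-leading coefficient, which for even $n$ picks up the top term $(\pi/2)x^{-1}$ of $\tanh(\pi/(2x))$ paired with $[x^n]Q_n$, yields $A[n,n-1]=\tfrac{\pi}{2}A[n,n]$.

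The substantial step is (iv). The conceptual route invokes Theorem~\ref{theo:main}: since $\mathcal Z_n$ is almost surely a bounded simple convex $n$-polytope, the Euler--Poincar\'e formula $\sum_{k=0}^{n-1}(-1)^k f_k(\mathcal Z_n)=1+(-1)^{n-1}$ holds pathwise; taking expectations, substituting $\E f_k(\mathcal Z_n)=(\pi^{n-k}/(n-k)!)\,A[n,n-k]$, re-indexing $j=n-k$, and isolating the term corresponding to $A[n,1]$ rearranges to exactly (iv). A self-contained alternative runs by contour integration. Writing $\tilde H_n(x):=Q_n(x)(1+\tanh(\pi/(2x)))$ for even $n$ (and similarly with $\cotanh$ for odd $n$), the identities $1+\tanh(y)=2/(1+e^{-2y})$ and $1+\cotanh(y)=2/(1-e^{-2y})$ recast $\sum_{k\geq 0}A[n,k](-\pi)^k/k!$ as the contour integral
\[
\frac{1}{2\pi i}\oint_{|x|=R}\frac{2\,Q_n(x)}{x\,(e^{\pi/x}\pm 1)}\,dx,
\]
with sign $+$ for even $n$ and $-$ for odd $n$, the original sum being finite because $A[n,k]=0$ for $k>n$. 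The substitution $u=\pi/x$ shrinks the contour to a small loop around $u=0$; setting $P_n(u):=u^{2\lfloor n/2\rfloor}Q_n(\pi/u)$, which is a monic polynomial in $u^2$, and using that $2/(e^u+1)=1-\tanh(u/2)$ consists of the constant $1$ plus odd powers of $u$ while $2/(e^u-1)+1=\cotanh(u/2)$ consists of $2/u$ plus odd powers, a parity argument forces only the leading monomial of $P_n$ to contribute to the relevant residue, which evaluates to $(-1)^n$. Comparing with the left-hand side gives (iv).

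Uniqueness is then a short induction on $n$. The base cases $n\in\{0,1\}$ are covered by (i) and (ii). Assuming row $A[n-2,\cdot]$ is known, property (i) fixes $A[n,0]$, property (ii) fixes $A[n,n]$ and $A[n,n-1]$, property (iii) determines $A[n,k]$ for each $k\in\{2,\ldots,n-2\}$ from $A[n-2,k]$ and $A[n-2,k-2]$, and finally (iv) expresses $A[n,1]$ as an explicit linear combination of the already-determined values $A[n,k]$ with $k\geq 2$. Hence the $n$-th row is forced, completing the induction. The principal obstacle is property (iv): the other three identities are essentially visible from~\eqref{eq:def_Q_n}--\eqref{eq:def_A_n_k}, whereas (iv) requires either appealing to Euler's formula on the random polytope $\mathcal Z_n$ (via Theorem~\ref{theo:main}) or executing the residue calculation sketched above.
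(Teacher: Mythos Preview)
Your argument is correct. The uniqueness induction matches the paper's, and your ``conceptual route'' for (iv) via the Euler relation on $\mathcal Z_n$ is exactly the paper's proof. Where you genuinely diverge is in offering self-contained verifications that bypass the forward reference to Theorem~\ref{theo:main}.

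For (ii), the paper deduces the formulae for $A[n,n]$ and $A[n,n-1]$ from the known value of $\E f_0(\mathcal Z_n)$ (and the simple-polytope relation $2f_1=nf_0$) together with Theorem~\ref{theo:main}. You instead read off the leading and next-to-leading coefficients of $Q_n(x)$, $\tanh(\pi/(2x))Q_n(x)$ and $\cotanh(\pi/(2x))Q_n(x)$ directly; this is more elementary and independent of the main theorem. For (iv), the paper again invokes Theorem~\ref{theo:main} combined with Euler's relation, whereas your residue computation---packaging $A[n,\cdot]$ as the coefficients of $\tilde H_n(x)=Q_n(x)(1+\tanh(\pi/(2x)))$ (respectively $\cotanh$), rewriting the alternating sum as $\frac{1}{2\pi i}\oint \frac{2Q_n(x)}{x(e^{\pi/x}\pm 1)}\,dx$, and substituting $u=\pi/x$---is a clean, purely analytic proof that stands on its own. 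The parity argument is correct: $P_n$ is even in $u$, while $1-\tanh(u/2)$ (resp.\ $\cotanh(u/2)-1$) is a constant plus odd powers, so only the leading monomial survives and the residue is $(-1)^n$, equivalent to (iv). The paper's route has the virtue of exhibiting (iv) as a geometric identity (Euler--Poincar\'e for the zero cell), while your alternative removes any logical dependence on Theorem~\ref{theo:main} and would allow the proposition to be proved \emph{before} the main theorem.
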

\begin{proof}
Part (ii) is equivalent to the formula  $\E f_0(\mathcal Z_n) = 2^{-n} n! \pi^{n}/\Gamma(\frac n2+1)^2$ together with the relation $2 f_1(\mathcal Z_n) = n f_0(\mathcal Z_n)$. Both results were already mentioned in Section~\ref{sec:related_results}, with $n$ replaced by $d$.  Part (iii) is just a restatement of~\eqref{eq:rel_A_n_k_intro}, whereas Part (iv) follows from the Euler relation $\sum_{k=0}^n (-1)^{k} \E f_k(\mathcal Z_n) = 1$ and~\eqref{eq:theo:main}. The fact that (i)-(iv) determine the $A[n,k]$'s \textit{uniquely} easily follows by induction over $n$. Indeed, (i) and (ii) determine $A[0,k]$, $A[1,k]$ and $A[2,k]$ for all admissible $k$'s, which is the base of induction. Assuming that the $A[m,k]$'s are determined uniquely for all $m\in \{0,\ldots,n-1\}$,  $k\in \{0,\ldots,m\}$ with some $n\geq 3$,  we can use (i), (ii) and (iii) to determine $A[n,k]$ for all $k \in \{0,\ldots,n\}\backslash\{1\}$. Finally, (iv) determines $A[n,1]$, thus completing the induction. Note that the Euler-type relation (iv) cannot be removed without loosing uniqueness since the value $A[3,1]$ is not determined uniquely by the remaining conditions.
\end{proof}

Let us finally mention that the triangular array $A[n,2k]$ is the row-reverse of Entry~A121408 and the unsigned version of Entry~A182971 in~\cite{sloane}.  The numbers $A[n,2k]$ can be expressed through the central factorial numbers defined as the coefficients in the expansion of $x^{[n]}:= x (x+\frac n2 -1) \ldots (x-\frac n2 +1)$.

\section{Convex hulls on the half-sphere}\label{sec:convex_half_sphere}
\subsection{Description of the model}\label{subsec:convex_half_sphere}
We are now going to state some applications of Theorem~\ref{theo:main} to a natural class of random spherical polytopes.
Let $U_1,\ldots,U_n$ be random points sampled uniformly and independently from the $d$-dimensional upper half-sphere
$$
\bS^d_+:=\{x = (x_0,\ldots,x_{d})\in \R^{d+1}: x_0\geq 0, \|x\|=1\}.
$$
The polyhedral convex cone  generated by these points (also known as their positive hull) is denoted by
$$
C_n = \pos(U_1,\ldots,U_n) = \left\{\sum_{i=1}^n \lambda_i U_i: \lambda_1,\ldots,\lambda_n\geq 0\right\}.
$$
Our aim is to compute expectations of various functionals of the random cone $C_n$ and the associated  random spherical polytope $C_n\cap \bS^d_+$. One example are their $f$-vectors which are related by $f_{k+1}(C_n) = f_k(C_n\cap \bS^d_+)$, for all $k\in \{0,\ldots,d\}$.  The random spherical polytope $C_n\cap \bS^d_+$ was first studied by B\'ar\'any, Hug, Reitzner and Schneider in~\cite{barany_etal}. Among other results, these authors computed the expected facet number $\E f_{d-1}(C_n\cap \bS^d_+)$, the expected surface area and spherical mean width of $C_n\cap \bS^d_+$, and showed that $\E f_0(C_n\cap \bS^d_+)$ converges to a finite limit expressed as a multiple integral.
These studies were continued in~\cite{convex_hull_sphere}, where it was shown that $C_n$ is closely related to convex hulls of certain Poisson processes.
Namely, let $\Pi_{d,1}$ be the Poisson point process on $\R^d\backslash\{0\}$ with intensity $\|x\|^{-d-1}$; see the right panel of Figure~\ref{fig:poisson} for a realization when $d=2$.  The convex hull of the atoms of this point process is denoted by $\conv \Pi_{d,1}$. Even though the number of atoms is a.s.\ infinite (because they cluster at $0$), this convex hull is a (random) polytope containing the origin in its interior, with probability $1$; see~\cite[Corollary 4.2]{convex_hull_sphere}. It is known that
\begin{equation}\label{eq:E_f_ell_Z_d_Poisson_Process}
\E f_{\ell}(\mathcal Z_d)= \E f_{d-\ell-1}(\conv \Pi_{d,1}) = \lim_{n\to\infty} \E f_{d-\ell-1}(C_n\cap \bS^d_+)
\end{equation}
for all $\ell\in \{0,1,\ldots,d-1\}$. The second equality was obtained in~\cite[Theorem 2.4]{convex_hull_sphere}.  In particular, the expected $f$-vector of $C_n\cap \bS^d_+$ converges, as $n\to\infty$, to a finite limit without any normalization, which is in sharp contrast to what is known in the setting of random convex hulls in flat convex bodies, where the $f$-vectors diverge to $\infty$.
The first equality in~\eqref{eq:E_f_ell_Z_d_Poisson_Process} follows from the observation made in~\cite[Theorem~1.23]{beta_polytopes} that $\Pi_{d,1}$ is the dual polytope of $\mathcal Z_d$, up to rescaling. In fact, the polar hyperplanes of the points of $\Pi_{d,1}$, with respect to the unit sphere,  form an isotropic and homogeneous Poisson hyperplane tessellation with intensity $\gamma= \pi^{d/2}/\Gamma(\frac d2)$; see Theorem~1.23 and Remark~1.24  in~\cite{beta_polytopes}.

\subsection{The expected \texorpdfstring{$f$-vector}{f-vector}}
First of all, we are able to identify the limit in~\eqref{eq:E_f_ell_Z_d_Poisson_Process} as follows.
\begin{theorem}\label{theo:cor}
For all $d\in\N$ and $k \in \{0,\ldots,d-1\}$,
$$
\E f_{k}(\conv \Pi_{d,1}) = \lim_{n\to\infty} \E f_{k}(C_n\cap \bS^d_+) = \frac{\pi^{k+1}}{(k+1)!} A[d,k+1].
$$
\end{theorem}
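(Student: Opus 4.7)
The plan is to deduce Theorem~\ref{theo:cor} as an immediate consequence of Theorem~\ref{theo:main} combined with the known identity~\eqref{eq:E_f_ell_Z_d_Poisson_Process}, via a single reindexing. The point is that all the hard work has already been carried out elsewhere: the identification of $\Pi_{d,1}$ as the (rescaled) polar dual of $\mathcal Z_d$ (from~\cite{beta_polytopes}, Theorem~1.23), the passage from convex hulls in $\bS^d_+$ to the Poisson model (from~\cite{convex_hull_sphere}, Theorem~2.4), and the combinatorial formula for $\E f_\ell(\mathcal Z_d)$ (Theorem~\ref{theo:main} above). What remains is to glue them together.

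Concretely, I would proceed as follows. For $k\in\{0,\ldots,d-1\}$, set $\ell := d-k-1\in\{0,\ldots,d-1\}$. Equation~\eqref{eq:E_f_ell_Z_d_Poisson_Process} then reads
\[
\E f_k(\conv\Pi_{d,1}) \;=\; \lim_{n\to\infty} \E f_k(C_n\cap \bS^d_+) \;=\; \E f_{d-k-1}(\mathcal Z_d).
\]
Next I would insert Theorem~\ref{theo:main} with $\ell := d-k-1$, which gives
\[
\E f_{d-k-1}(\mathcal Z_d) \;=\; \frac{\pi^{d-(d-k-1)}}{(d-(d-k-1))!}\,A[d,\,d-(d-k-1)] \;=\; \frac{\pi^{k+1}}{(k+1)!}\,A[d,k+1].
\]
Chaining the two displays yields exactly the claimed formula, and the statement is established for all admissible $k$ including the endpoints $k=0$ (vertex count) and $k=d-1$ (facet count).

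There is no genuine obstacle to surmount: the proof is a one-line substitution once Theorem~\ref{theo:main} and the preparatory identity~\eqref{eq:E_f_ell_Z_d_Poisson_Process} are in hand. The only thing worth double-checking is the range of indices — that for every $k\in\{0,\ldots,d-1\}$ the corresponding $\ell = d-k-1$ lies in $\{0,\ldots,d-1\}$ so that~\eqref{eq:E_f_ell_Z_d_Poisson_Process} applies, and that $k+1\in\{1,\ldots,d\}$ falls within the range of indices for which $A[d,k+1]$ is defined by~\eqref{eq:def_A_n_k}. Both verifications are trivial, so the proof is essentially a one-sentence bookkeeping argument.
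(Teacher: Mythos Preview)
Your proposal is correct and matches the paper's own proof essentially verbatim: the paper's argument is the single sentence ``Combine~\eqref{eq:E_f_ell_Z_d_Poisson_Process} with Theorem~\ref{theo:main} and put $k := d-\ell-1$.'' Your additional index-range checks are sound and simply make explicit what the paper leaves implicit.
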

\begin{proof}
Combine~\eqref{eq:E_f_ell_Z_d_Poisson_Process} with Theorem~\ref{theo:main} and put $k :=d-\ell-1$.
\end{proof}

Previously, only the following two special cases of Theorem~\ref{theo:cor} were known with explicit limits:
$$
\lim_{n\to\infty} \E f_{d-1}(C_n\cap \bS^d_+) =  \frac{d!}{2^d} \kappa_d^2,
\qquad
\lim_{n\to\infty} \E f_{1}(C_n\cap \bS^d_+) = \frac{1}{2} \binom {d+1}{3} \pi^2.
$$
The first identity was established in~\cite[Theorem~3.1]{barany_etal}, while the second one can be found in~\cite[Remark~2.5]{convex_hull_sphere}. Via the duality between the polytopes $\mathcal Z_d$ and $\conv \Pi_{d,1}$, these identities are equivalent to the corresponding properties of the Poisson zero polytope $\mathcal Z_d$ stated in~\eqref{eq:E_f_0_Poisson_Poly} and~\eqref{eq:E_f_d-2_Poisson_Poly}.

In fact, we can even compute the complete expected $f$-vector of the spherical polytope $C_n\cap \bS^d_+$ for every finite $n$.
\begin{theorem}\label{theo:spherical_polytope_f_vector}
For all $d\in\N$, $n\geq d+1$, and all $k\in \{0,\ldots,d-1\}$, we have
\begin{equation}\label{eq:theo:spherical_polytope_f_vector}
\E f_{k} (C_n\cap \bS^d_+) = \frac{n!\pi^{k+1-n}}{(k+1)!} \sum_{\substack{s=0,1,\ldots \\ d-2s\geq k+1}} B\{n, d-2s\}(d-2s-1)^2 A[d-2s-2,k-1],
\end{equation}
where $A[n,k]$ and $B\{n,k\}$ were defined in~\eqref{eq:def_A_n_k} and~\eqref{eq:def_B_n_k_intro}, respectively.
\end{theorem}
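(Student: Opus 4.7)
The strategy is to express $\E f_k(C_n\cap\bS^d_+)$ as a face probability, disintegrate this probability via a Blaschke--Petkantschin change of variables on the half-sphere, and identify the resulting inner and outer integrals as, respectively, expected face counts of lower-dimensional Poisson zero polytopes (via Theorem~\ref{theo:main}) and the angular integrals defining $B\{n,d-2s\}$.

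By exchangeability of $U_1,\dots,U_n$,
\begin{equation*}
\E f_k(C_n \cap \bS^d_+) = \binom{n}{k+1}\,\P\bigl[\pos(U_1,\dots,U_{k+1})\text{ is a }(k+1)\text{-face of }C_n\bigr].
\end{equation*}
Setting $L=\lin(U_1,\dots,U_{k+1})$, which is almost surely of dimension $k+1$, this face event is equivalent to the condition that the projections of $U_{k+2},\dots,U_n$ onto $L^\perp$ have positive hull strictly contained in some closed half-space of $L^\perp$. I would next apply a spherical Blaschke--Petkantschin-type disintegration to rewrite the face probability as an integral over the Grassmannian of $(k+1)$-planes, further decomposed along an angular parameter $x\in[0,\pi]$ that parameterizes a suitable ``witness hyperplane'' through $L$. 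By rotational symmetry around the pole $e_0$, only this single angular degree of freedom is relevant; the Jacobian of the spherical disintegration contributes the factor $(\sin x)^{d-2s-1}$, while the probability that the ``leftover'' $n-d+2s$ points land in an angular slice of width $x$ contributes $x^{n-d+2s}$, together producing the integral in $B\{n,d-2s\}$.

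For each fixed $x$ and each admissible $s$, the inner conditional probability reduces to a counting problem for $(k-1)$-codimensional faces in a random configuration living in the $(d-2s-2)$-dimensional ``link'' of the face; by the duality between Poisson hyperplane tessellations and the processes $\Pi_{d',1}$ (\cite[Theorem~1.23]{beta_polytopes}), this inner count coincides with the expected $f$-vector of the Poisson zero polytope $\mathcal Z_{d-2s-2}$, and Theorem~\ref{theo:main} identifies it as $\tfrac{\pi^{k-1}}{(k-1)!}\,A[d-2s-2,k-1]$. The multiplier $(d-2s-1)^2$ then arises as the Jacobian of a dimension-drop from $d-2s$ to $d-2s-2$ in the link, in exact analogy with the recurrence $A[n+2,k]-A[n,k]=(n+1)^2A[n,k-2]$ of Lemma~\ref{lem:rec_first_kind}. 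Summing over $s$ in the admissible range $d-2s\ge k+1$ (so that the link of dimension $d-2s$ can accommodate a $(k+1)$-dimensional face) and collecting all prefactors, including the factor $\pi^{k+1-n}$ from the normalization of the angular measure on $[0,\pi]$, produces the stated formula.

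The main obstacle is the rigorous decomposition of the inner face probability into the sum over $s$ with the correct $(d-2s-1)^2$ weightings and its identification with the lower-dimensional Poisson zero polytope face count. I would carry this out either by a direct integral-geometric computation using the spherical Blaschke--Petkantschin formula together with the dual recurrence~\eqref{eq:rel_B_n_k_intro} for $B\{n,k\}$, or by induction on $d$ calibrated against Theorem~\ref{theo:cor} in the $n\to\infty$ limit in order to fix the constants.
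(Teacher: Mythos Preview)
Your proposal has a genuine gap at the step you yourself flag, and it also misidentifies what the inner quantity is.

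The paper's proof is short but rests on heavy machinery developed earlier. It first identifies $C_n\cap\bS^d_+$ with the beta' polytope $\tilde P_{n,d}^{(d+1)/2}$ via gnomonic projection, then invokes the ready-made formula \cite[Theorem~1.14]{beta_polytopes},
\[
\E f_k(\tilde P_{n,d}^{(d+1)/2}) \;=\; 2 \sum_{\substack{s\ge 0\\ d-2s\ge k+1}} \tilde\II_{n,d-2s}(1)\,\tilde\JJ_{d-2s,k+1}\!\left(\tfrac d2-s\right),
\]
and finally substitutes $\tilde\II_{n,d-2s}(1)$ from Lemma~\ref{lem:I_n_k_B_n_k} (producing $B\{n,d-2s\}$) and $\tilde\JJ_{d-2s,k+1}(\tfrac d2-s)$ from Proposition~\ref{prop:internal_explicit_no_parity} (producing $(d-2s-1)^2 A[d-2s-2,k-1]$). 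The sum over $s$ is already present in the cited formula; it originates from the half-Gauss--Bonnet relation $\sum_{j\equiv d-1\,(2)}\upsilon_j(T)=\tfrac12$ for conic intrinsic volumes of tangent cones, not from any Blaschke--Petkantschin disintegration.

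Your outline conflates two different mechanisms. A Blaschke--Petkantschin decomposition of the face probability for $\pos(U_1,\dots,U_{k+1})$ produces a \emph{single} integral over $(k+1)$-planes, not a sum indexed by $s$; there is no geometric ``link'' of varying dimension $d-2s-2$ attached to a fixed $(k+1)$-tuple. The index $s$ is an artefact of the alternating-parity summation in Gauss--Bonnet, and the factor $(d-2s-1)^2$ is not a Jacobian of a dimension drop but the combinatorial content of Proposition~\ref{prop:internal_explicit_no_parity}. More seriously, the inner quantity is \emph{not} an expected face count of a Poisson zero polytope $\mathcal Z_{d-2s-2}$: for finite $n$ no Poisson process appears anywhere in the link. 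It is the expected internal-angle sum $\tilde\JJ_{d-2s,k+1}$ of a beta' simplex, and its evaluation in terms of $A[\cdot,\cdot]$ goes through the telescoping argument~\eqref{eq:tech1}--\eqref{eq:tech2}, which uses Theorem~\ref{theo:main} but is not a direct face-count identification. Your fallback of calibrating constants against the $n\to\infty$ limit of Theorem~\ref{theo:cor} cannot recover the finite-$n$ formula either, since the $n$-dependence sits entirely inside $B\{n,d-2s\}$ and is not determined by its leading asymptotics.
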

\begin{remark}\label{rem:k=0_interpretation}
In the case $k=0$, the right-hand side of~\eqref{eq:theo:spherical_polytope_f_vector} may involve the summand $0^2 A[-1,-1]$ which should be interpreted as $A[1,1] - A[-1,1] = A[1,1] = 2/\pi$ in view of~\eqref{eq:rel_A_n_k_intro}. This convention can alternatively be explained by defining the meromorphic continuation of the function $A[x,x]$ via $A[x,x] := 2^{-x} \Gamma(x+1)^2 / \Gamma(\frac x2 +1)^2$, for $x\in\mathbb C$. Then, $x=-1$ is a pole and we have $(x+1)^2 A[x,x] \to 2/\pi$ as $x\to -1$. The same convention applies to several other results in this paper.
\end{remark}

It is interesting to compare Theorems~\ref{theo:cor} and~\ref{theo:spherical_polytope_f_vector}
with the results of Cover and Efron~\cite{cover_efron} (see also~\cite{HS15} for a recent work in this direction) who computed the expected $f$-vector of the random polyhedral cone $D_n:=\pos(V_1,\ldots,V_n)$ generated by $n$ i.i.d.\ random vectors $V_1,\ldots,V_n$ with uniform distribution on the \textit{whole} sphere $\bS^d\subset \R^{d+1}$. They gave an explicit formula for the conditional expectation $\E [f_{k}(D_n\cap \bS^d) | \{D_n \neq \R^{d+1}\}]$ in terms of binomial coefficients and proved that
$$
\lim_{n\to\infty} \E [f_{k}(D_n\cap \bS^d) | \{ D_n \neq \R^{d+1}\} ] = 2^{k+1} \binom{d}{k+1}
$$
for all $k\in \{0,\ldots, d-1\}$; see~\cite[Theorem~3']{cover_efron}. The number on the right-hand side is the number of $k$-faces of the $d$-dimensional crosspolytope (as Cover and Efron~\cite[Section~4]{cover_efron} observed in the setting of the dual cones). The event  $\{D_n \neq \R^{d+1}\}$ occurs iff there is a (random) half-space containing the vectors $V_1,\ldots,V_n$, whereas in our Theorems~\ref{theo:cor} and~\ref{theo:spherical_polytope_f_vector} we condition on the event that these vectors are in some fixed (deterministic) half-space. These very similar looking types of conditioning lead to two completely different  limits of the $f$-vector.

Let us now consider some special cases of Theorem~\ref{theo:spherical_polytope_f_vector}.  In the case $k=d-1$, \citet[Theorem~3.1]{barany_etal} showed that
\begin{equation}\label{eq:barany_etal_formula}
\E f_{d-1} (C_n\cap \bS^d_+) = \binom n d \frac {2\omega_d}{\omega_{d+1}} \int_0^\pi (\sin x)^{d-1} \left(\frac {x}{\pi}\right)^{n-d}\dd x
\end{equation}
with $\omega_{d+1} = 2\pi^{(d+1)/2}/\Gamma(\frac {d+1}2)$ being the surface measure of the $d$-dimensional unit sphere $\bS^d\subset \R^{d+1}$.
To see that this result is a special case of Theorem~\ref{theo:spherical_polytope_f_vector}, let us first consider the case when $d$ is even. Taking $k=d-1$ in Theorem~\ref{theo:spherical_polytope_f_vector}  and observing that $A[d-2,d-2]=(d-3)!!^2$, we obtain
$$
\E f_{d-1} (C_n\cap \bS^d_+)
=
\frac{n! \pi^{d-n}}{d!} (d-1)!!^2 B\{n,d\}
=
\binom n d \frac{(d-1)!!^2}{(d-1)!} \int_0^\pi (\sin x)^{d-1} \left(\frac {x}{\pi}\right)^{n-d}\dd x.
$$
This is consistent with~\eqref{eq:barany_etal_formula},  as one can see using the formulae $\Gamma(m) = (m-1)!$ and $\Gamma(m+\frac 12) = 2^{-m} (2m-1)!! \sqrt \pi$, for $m\in\N$, or the Legendre duplication formula. Let now $d\geq 3$ be odd. Then, we may take $k=d-2$ in Theorem~\ref{theo:spherical_polytope_f_vector} and use the relation $f_{d-1}(C_n\cap \bS^d_+) = \frac 2 d f_{d-2}(C_n\cap \bS^d_+)$. Bearing in mind that $A[d-2,d-3] = (d-3)!!^2$, we arrive at
\begin{align*}
\E f_{d-1} (C_n\cap \bS^d_+)
&=
\frac 2d \, \E f_{d-2} (C_n\cap \bS^d_+)
=
\frac 2d \, \frac{n! \pi^{d-1-n}}{(d-1)!} (d-1)!!^2 B\{n,d\} \\
&=
\binom n d  \frac 2\pi \frac{(d-1)!!^2}{(d-1)!} \int_0^\pi (\sin x)^{d-1} \left(\frac {x}{\pi}\right)^{n-d}\dd x,
\end{align*}
which is consistent with~\eqref{eq:barany_etal_formula} (using the same formulae for the Gamma function as above) and completes its verification.

Another two special cases in which the expression in Theorem~\ref{theo:spherical_polytope_f_vector} can be considerably simplified are given in the following
\begin{proposition}\label{prop:f_vect_of_d+2_points}
For all $d\in\N$ and $k\in\{0,\ldots,d-1\}$, we have
\begin{align*}
&\binom{d+2}{k+1} - \E f_k (C_{d+2}\cap \bS^{d}_+) = \pi^{k-d-1} \frac{d+2}{(k+1)!} \cdot \frac{\sqrt \pi\, \Gamma\left(\frac {d+2}2\right)}{\Gamma\left(\frac{d+3}{2}\right)} \cdot (d+1)^2 A[d,k-1],
\\
&\binom{d+3}{k+1} - \E f_k (C_{d+3}\cap \bS^{d}_+) = \pi^{k-d-1} \frac{d+3}{(k+1)!} \cdot \frac{\sqrt \pi\, \Gamma\left(\frac {d+4}2\right)}{\Gamma\left(\frac{d+3}{2}\right)} \cdot (d+1)^2 A[d,k-1].
\end{align*}
\end{proposition}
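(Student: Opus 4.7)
The plan is to combine Theorem~\ref{theo:spherical_polytope_f_vector} with the recurrences on the arrays $A$ and $B$ from Section~\ref{subsec:recurrence_relations}. Set
\[
G_n := \sum_{\substack{s\geq 0\\ d-2s\geq k+1}} B\{n, d-2s\}(d-2s-1)^2 A[d-2s-2, k-1],
\]
so that $\E f_k(C_n\cap \bS^d_+) = \frac{n!\pi^{k+1-n}}{(k+1)!} G_n$. The task is to evaluate $G_{d+2}$ and $G_{d+3}$ in closed form.

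For $n=d+2$, I would first apply the $B$-recurrence $(m-1)^2 B\{d+2, m\} = B\{d, m-2\} - B\{d, m\}$ (Lemma~\ref{lem:rec_second_kind}) to rewrite $G_{d+2}$ as a sum of differences of $B\{d,\cdot\}$. Adding the boundary term $A[d, k-1]\, B\{d, d\}$, which by the same recurrence applied formally at $m=d+2$ (with the convention $B\{d, d+2\} = 0$) equals $A[d, k-1](d+1)^2 B\{d+2, d+2\}$, extends the sum to include $m = d+2$. An Abel summation combined with the $A$-recurrence $A[j, k-1] - A[j-2, k-1] = (j-1)^2 A[j-2, k-3]$ (Lemma~\ref{lem:rec_first_kind}) then restructures the extended sum into precisely the form that Theorem~\ref{theo:spherical_polytope_f_vector} would yield for $\E f_{k-2}(C_d\cap \bS^d_+)$, plus a single correction term that vanishes thanks to $A[m,k-1]=0$ for $0\leq m<k-1$. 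Since $d$ random points on $\bS^d_+$ generate almost surely a $d$-dimensional simplicial cone whose intersection with $\bS^d_+$ is a $(d-1)$-dimensional spherical simplex possessing exactly $\binom{d}{k-1}$ faces of dimension $k-2$, the corrected sum equals $\pi^{d+1-k}/(d+1-k)!$, yielding
\[
G_{d+2} = \frac{\pi^{d+1-k}}{(d+1-k)!} - A[d, k-1]\, B\{d, d\}.
\]

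Substituting this back gives $\binom{d+2}{k+1} - \E f_k(C_{d+2}\cap \bS^d_+) = \frac{(d+2)!\pi^{k-d-1}}{(k+1)!} A[d, k-1]\, B\{d, d\}$. The stated form then follows from the Beta-integral identity $(d+1)!\,B\{d+2,d+2\} = \int_0^\pi (\sin x)^{d+1}\,\dint x = \sqrt{\pi}\, \Gamma((d+2)/2)/\Gamma((d+3)/2)$ together with the identity $(d+1)^2 B\{d+2, d+2\} = B\{d,d\}$, both being consequences of the reduction $\int_0^\pi (\sin x)^{n}\,\dint x = \frac{n-1}{n}\int_0^\pi (\sin x)^{n-2}\,\dint x$. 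The case $n=d+3$ is entirely analogous: the $B$-recurrence now produces $B\{d+1,\cdot\}$ terms, and the Abel-summation argument identifies the corrected sum with $\E f_{k-2}(C_{d+1}\cap \bS^d_+) = \binom{d+1}{k-1}$, yielding $G_{d+3} = \pi^{d+2-k}/(d+2-k)! - A[d,k-1]\, B\{d+1, d\}$. Using the reflection symmetry $x\mapsto \pi-x$ to show $B\{d+1, d\} = (\pi/2)\,B\{d,d\}$, followed by the identity $\Gamma((d+4)/2) = \tfrac{d+2}{2}\Gamma((d+2)/2)$, then produces the stated formula.

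The main obstacle will be the careful bookkeeping in the Abel-summation argument: the sum over $m$ ranges over a single parity class, with lower bound $m_0\in\{k+1, k+2\}$ depending on the parity of $d-k$, so the indexing must be handled uniformly in these cases. Additionally, the degenerate boundary values where negative indices appear (most notably when $k=0$, where the expression $0^2 A[-1,-1]$ is encountered) must be interpreted according to Remark~\ref{rem:k=0_interpretation}, and the implicit use of Theorem~\ref{theo:spherical_polytope_f_vector} at the boundary value $n=d$ (respectively $n=d+1$ for the second identity) must be justified by recognising $C_n\cap \bS^d_+$ almost surely as a lower-dimensional spherical simplex whose $f$-vector is deterministic.
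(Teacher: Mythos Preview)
Your approach diverges from the paper's and contains a genuine gap at the final evaluation step. The paper's proof is a one-line comparison: Lemma~\ref{lemma:identity_A_B_2} (Equation~\eqref{eq:identity_2A}) with $n=d+2$ and $k$ replaced by $k+1$ gives a sum that differs from the Theorem~\ref{theo:spherical_polytope_f_vector} sum by exactly the single term $s=0$, namely $B\{d+2,d+2\}(d+1)^2 A[d,k-1]$; evaluating $B\{d+2,d+2\}$ via the beta integral finishes. The second identity is handled the same way with Equation~\eqref{eq:identity_2B}.

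Your route---applying the $B$-recurrence, Abel summation, and the $A$-recurrence to push the extended sum from level $n=d+2$ down to level $n=d$---is algebraically sound and in fact reproduces the inductive mechanism behind Lemma~\ref{lemma:identity_A_B_1}. The gap is in the last step. You claim that the resulting sum at level $n=d$ equals $\pi^{d+1-k}/(d+1-k)!$ because $C_d\cap\bS^d_+$ is almost surely a $(d-1)$-simplex with $\binom{d}{k-1}$ faces of dimension $k-2$. But this only tells you the value of $\E f_{k-2}(C_d\cap\bS^d_+)$; it does \emph{not} tell you that the right-hand side of Theorem~\ref{theo:spherical_polytope_f_vector} computes this value when $n=d$, since the theorem is stated (and proved, via~\eqref{eq:f_vect_beta_prime}) only for $n\geq d+1$. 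The geometric observation identifies the left-hand side of a formula you have not established. To close the gap you must either (i) extend the proof of Theorem~\ref{theo:spherical_polytope_f_vector} to $n=d$ by checking that~\eqref{eq:f_vect_beta_prime} remains valid for a degenerate lower-dimensional simplex, or (ii) recognise that the sum you obtain at level $d$ is precisely the left-hand side of~\eqref{eq:identity_2A} at $(n,k)=(d,k-1)$, which the paper proves independently---but then your Abel-summation detour is superfluous, since Lemma~\ref{lemma:identity_A_B_2} at $(d+2,k+1)$ already gives the result in one step. Finally, for $k\in\{0,1\}$ the quantity $\E f_{k-2}$ is meaningless, so those cases would require separate treatment regardless.
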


Let us also mention that taking $k=1$ in Theorem~\ref{theo:spherical_polytope_f_vector} and observing that $A[d,0]=1$ for all $d\in\N$, we arrive at the following expression for  the number of edges of $C_n\cap \bS^d_+$:
$$
\E f_1(C_n\cap \bS^d_+) = \frac 12 n!\pi^{2-n}\sum_{\substack{s=0,1,\ldots\\ m:=d-2s\geq 2}} \frac{m-1}{(m-2)! (n-m)!} \int_0^\pi (\sin x)^{m-1} x^{n-m} \dd x.
$$

Let us finally observe that using the definition of $B\{n,k\}$ it is not difficult\footnote{Make the change of variables $x= \pi(1 - y/n)$ in the integral $\int_0^\pi (\sin x)^{k-1} x^{n-k} \dd x$.} to show that for every fixed $k\in\N$, we have
$$
\lim_{n\to\infty} \frac{B\{n,k\}}{\pi^n/n!} = 1.
$$
Inserting this into the formula from Theorem~\ref{theo:spherical_polytope_f_vector}, using the recursive property of $A[n,k]$ (see~\eqref{eq:rel_A_n_k_intro}) and evaluating the telescope sum, one can easily re-derive Theorem~\ref{theo:cor}.

\subsection{The expected solid angle}
Next we shall compute the expected spherical volume of $C_n\cap \bS^d_+$, or, which is the same up to a constant factor, the expected solid angle of the cone $C_n$. Let $\alpha(C_n)$ be the angle of the cone $C_n$ (normalized such that the full-space solid angle is $1$). Also, let $\sigma_{d}$ denote the $d$-dimensional surface measure on the sphere $\bS^d$. Recall that $\omega_{d+1} := \sigma_{d}(\bS^d)= 2\pi^{(d+1)/2}/\Gamma(\frac {d+1}2)$.

\begin{theorem}\label{theo:expected_angle}
For all $d\in\N$ and $n\geq d+1$ we have
$$
\E \alpha(C_n) = \frac{ \E \sigma_{d} (C_n\cap \bS_+^d)}{\omega_{d+1}}
=
\frac{n!}{2\pi^{n}} \sum_{\substack{m\in \{d+2,\ldots,n+1\} \\ m \equiv d \Mod{2}}} B\{n+1, m\} (m-1)^2  A[m-2,-1].
$$
\end{theorem}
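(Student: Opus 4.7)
The plan is to combine a spherical Efron-type identity with Theorem~\ref{theo:spherical_polytope_f_vector} (applied with $k=0$), and then to reconcile the resulting constant $\tfrac12$ with the stated sum via a combinatorial identity of independent interest.

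Since $C_n\subset\{x_0\geq 0\}$, we have $C_n\cap\bS^d=C_n\cap\bS^d_+$. For an independent uniform point $U_{n+1}$ on $\bS^d_+$, conditionally on $U_1,\ldots,U_n$ one has $\P(U_{n+1}\in C_n\mid U_1,\ldots,U_n)=\sigma_d(C_n\cap\bS^d_+)/\sigma_d(\bS^d_+)=2\alpha(C_n)$. A point $U_i$ is a vertex of $C_{n+1}\cap\bS^d_+$ iff it is not contained in the positive hull of the remaining $U_j$'s, so by exchangeability
\[\E f_0(C_{n+1}\cap\bS^d_+)=(n+1)\,\P(U_{n+1}\notin C_n)=(n+1)\bigl(1-2\,\E\alpha(C_n)\bigr),\]
whence $\E\alpha(C_n)=\tfrac12-\E f_0(C_{n+1}\cap\bS^d_+)/\bigl(2(n+1)\bigr)$.

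Substituting Theorem~\ref{theo:spherical_polytope_f_vector} with $k=0$ and sample size $n+1$ (interpreting $m=1$ via the convention of Remark~\ref{rem:k=0_interpretation} when $d$ is odd), and setting $m:=d-2s$, the resulting sum runs over $m\equiv d\Mod{2}$ with $1\leq m\leq d$. Since the target formula has precisely the same summand but with $m$ in the complementary range $d+2\leq m\leq n+1$, the proof reduces to the combinatorial identity
\[\frac{\pi^n}{n!}=\sum_{\substack{m\equiv d\Mod{2}\\ 1\leq m\leq n+1}}B\{n+1,m\}(m-1)^2\,A[m-2,-1],\qquad(*)\]
which must hold for every $n\geq d$. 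A direct calculation supports $(*)$ in small cases: for $d=2$, $n=3$ both sides equal $\pi^3/6$, while for $d=1$, $n=2$ one recovers $\E\alpha(C_2)=1/6$, in agreement with the elementary computation $\E|\theta_1-\theta_2|/(2\pi)$ for two i.i.d.\ uniform angles on $[-\pi/2,\pi/2]$.

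To establish $(*)$ I would first rewrite $(m-1)^2A[m-2,-1]=A[m,1]-A[m-2,1]$ using the case $k=1$, $n=m-2$ of~\eqref{eq:rel_A_n_k_intro}, turning the right-hand side of $(*)$ into the telescoping-style sum $\sum_m B\{n+1,m\}\bigl(A[m,1]-A[m-2,1]\bigr)$, and then apply Abel summation together with the dual recurrence~\eqref{eq:rel_B_n_k_intro}. Equivalently, using the generating-function identity $\sum_{n\geq m-1}t^n B\{n+1,m\}=\frac{t^{m-1}}{(m-1)!}\int_0^\pi(\sin x)^{m-1}e^{tx}\,\dd x$, identity $(*)$ converts into an analytic statement that should follow from the Laurent expansion at infinity of $\tanh(\pi/(2x))$ (respectively $\cotanh(\pi/(2x))$) multiplied by $Q_m(x)$ from~\eqref{eq:def_Q_n}. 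The main technical obstacle is carrying out this verification cleanly in both parity cases of $d$: the switch between $\tanh$ and $\cotanh$ in the definition of $A[\cdot,-1]$ leads to slightly different generating-function manipulations involving Bernoulli-like coefficients.
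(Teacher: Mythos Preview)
Your reduction is exactly the paper's: the spherical Efron identity~\eqref{eq:efron} together with Theorem~\ref{theo:spherical_polytope_f_vector} at $k=0$ gives
\[
\E\alpha(C_n)=\frac12-\frac{n!}{2\pi^n}\sum_{\substack{1\le m\le d\\ m\equiv d\ (\mathrm{mod}\ 2)}}B\{n+1,m\}(m-1)^2A[m-2,-1],
\]
and the theorem follows once one replaces the constant $\tfrac12$ by the full sum over $1\le m\le n+1$ with $m\equiv d\ (\mathrm{mod}\ 2)$. Your identity $(*)$ is precisely this replacement, and it is the $k=1$ case of the paper's Lemma~\ref{lemma:identity_A_B_2}.

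The gap is that you have not proved $(*)$, and your sketched direct route (telescope via $A[m,1]-A[m-2,1]$, then Abel summation with the $B$-recurrence~\eqref{eq:rel_B_n_k_intro}) is exactly the mechanism of Lemma~\ref{lemma:identity_A_B_1}, which the paper explicitly notes breaks down at $k=1$: after the induction step one is left with a boundary term involving the ``ugly'' values $A[\cdot,-1]$ rather than the trivial $A[\cdot,0]=1$, so nothing closes up. Your alternative generating-function idea may well be viable, but as written it is a plan, not a proof, and you yourself flag the parity-dependent $\tanh/\cotanh$ manipulations as the ``main technical obstacle''.

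The paper avoids a direct proof of $(*)$ altogether. It obtains Lemma~\ref{lemma:identity_A_B_2} by plugging the explicit formulae for $\tilde\II_{n,k}(1)$ (Lemma~\ref{lem:I_n_k_B_n_k}) and the \emph{parity-free} internal-angle formula $\tilde\JJ_{n,k}(n/2)$ (Proposition~\ref{prop:internal_explicit_no_parity}) into the stochastic McMullen relations of Proposition~\ref{prop:relations}. The parity-free Proposition~\ref{prop:internal_explicit_no_parity}, in turn, is deduced \emph{a posteriori} from the full Theorem~\ref{theo:main}, whose odd-codimension case was obtained via Dehn--Sommerville. In other words, the paper's proof of $(*)$ is indirect and rests on the entire machinery developed earlier; a self-contained combinatorial proof of $(*)$ along the lines you suggest would be new and of independent interest, but is not supplied here.
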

The asymptotic rate of convergence of $\E \alpha(C_n)$ to $1/2$ (the solid angle of the half-space), as $n\to\infty$ (while $d\in\N$ stays constant) was determined in~\cite[Theorem~7.1]{barany_etal}, where it was shown that
$$
\frac 12 \omega_{d+1}  -  \E \sigma_{d} (C_n\cap \bS_+^d) = C_*(d)n^{-1} + O(n^{-2})
\quad
\text{ and }
\quad
\lim_{n\to\infty} \E f_0(C_n\cap \bS^d_+) = \frac{2C_*(d)}{\omega_{d+1}}
$$
for a certain constant $C_*(d)$ expressed in~\cite[Equation~(22)]{barany_etal} as a multiple integral.  It is not clear how to evaluate this integral.  Comparing the second formula with Theorem~\ref{theo:cor} (where we take $k=0$), we conclude that
$$
C_*(d) = \pi \omega_{d+1} A[d,1]/2.
$$

The proof of Theorem~\ref{theo:expected_angle} is based on an Efron-type identity (see~\eqref{eq:efron}, below) linking the expected angle of $C_n$ to $\E f_0(C_{n+1}\cap \bS^d_+)$. More generally,  Theorem~2.7 of~\cite{convex_hull_sphere} expresses the so-called expected Grassmann angles of the cones $C_n$, $n\in\N$, through their $f$-vectors. Combining this result with Theorem~\ref{theo:spherical_polytope_f_vector}, it is possible to obtain explicit expressions for the expected Grassmann angles of $C_n$. Moreover, Theorem~2.8 of~\cite{convex_hull_sphere} gives asymptotic expressions for the expected Grassmann angles, expected conic intrinsic volumes and expected conic mean projection volumes of the random cone $C_n$, as $n\to\infty$, in terms of certain constants $B_{k,d}$. By combining our Theorem~\ref{theo:cor} with Theorem~2.4 of~\cite{convex_hull_sphere}, we obtain the formula
$$
B_{k,d} = \frac {k!}2 \lim_{n\to\infty} \E f_{k-1} (C_n\cap \bS^d_+) =  \frac{\pi^{k}}{2} A[d,k],
$$
for all $k\in \{1,\ldots,d\}$. This turns all results of~\cite{convex_hull_sphere} that involve the constants $B_{k,d}$ into explicit formulae. We refrain from restating them here.

\subsection{Sylvester problem on the half-sphere}\label{subsec:sylvester}
The classical Sylvester four point problem asks for the probability that  four random points chosen uniformly and independently from some  convex plane region have a convex hull which is a triangle. In the case when the region is a disk (or, more generally, any ellipse), the answer is $35/(12 \pi^2)$. A $d$-dimensional version of this problem was solved by Kingman~\cite{kingman} who computed explicitly the probability that the convex hull of $d+2$ points chosen independently and uniformly from the $d$-dimensional ball is a simplex with $d+1$ vertices. Let us study a similar problem on the half-sphere. Let $U_1,\ldots, U_{d+2}$ be random points sampled uniformly and independently on the upper half-sphere $\bS_+^d$. We ask for the probability $P(d)$ that the spherical convex hull of these points, namely $C_{d+2}\cap \bS^{d}_+$, is a spherical simplex with $d+1$ vertices.

\begin{theorem}\label{theo:sylvester}
For all $d\in\N$ we have
$$
P(d) = \pi^{-(d+1)}\, (d+2) \frac{\sqrt \pi\, \Gamma\left(\frac {d+2}2\right)}{\Gamma\left(\frac{d+3}{2}\right)} \cdot (d+1)^2 A[d,-1].
$$
\end{theorem}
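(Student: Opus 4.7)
The plan is to reduce Theorem~\ref{theo:sylvester} directly to the $k=0$ case of Proposition~\ref{prop:f_vect_of_d+2_points} via a short indicator-averaging argument.

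First I would establish a general-position statement for the random sample. Since $U_1,\ldots,U_{d+2}$ have a continuous joint density on $(\bS_+^d)^{d+2}$, almost surely no $d+1$ of the vectors $U_1,\ldots,U_{d+2}$ lie in a common linear hyperplane of $\R^{d+1}$. Consequently, any $d+1$ of them positively span a full-dimensional simplicial cone, and the cone $C_{d+2}=\pos(U_1,\ldots,U_{d+2})\subset \R^{d+1}$ has either exactly $d+1$ or exactly $d+2$ extreme rays. In the first case, precisely one of the $U_i$ lies in the interior of the cone generated by the others, and $C_{d+2}\cap \bS_+^d$ is a spherical simplex with $d+1$ vertices; in the second case, all $d+2$ points are vertices. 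Thus
$$
f_0(C_{d+2}\cap \bS_+^d)\in \{d+1,d+2\}\qquad\text{almost surely}.
$$

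Next I would take expectations. Writing $P(d)=\P[f_0(C_{d+2}\cap \bS_+^d)=d+1]$, the two-valued nature of $f_0$ gives
$$
\E f_0(C_{d+2}\cap \bS_+^d)=(d+1)P(d)+(d+2)(1-P(d))=(d+2)-P(d),
$$
so that
$$
P(d)=\binom{d+2}{1}-\E f_0(C_{d+2}\cap \bS_+^d).
$$

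Finally, I would specialize the first identity of Proposition~\ref{prop:f_vect_of_d+2_points} to $k=0$, adopting the convention of Remark~\ref{rem:k=0_interpretation} to interpret the boundary value $A[d,-1]$. The right-hand side becomes exactly the expression claimed in Theorem~\ref{theo:sylvester}, which completes the proof. Since the real combinatorial work is already packaged inside Proposition~\ref{prop:f_vect_of_d+2_points}, no genuine obstacle remains; the only point that needs to be recorded is the general-position dichotomy $f_0\in\{d+1,d+2\}$ above, which is standard but essential to turn the expected face count into a simplex probability.
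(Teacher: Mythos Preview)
Your proposal is correct and follows essentially the same route as the paper: both arguments observe that $f_0(C_{d+2}\cap\bS^d_+)\in\{d+1,d+2\}$ almost surely, deduce $P(d)=(d+2)-\E f_0(C_{d+2}\cap\bS^d_+)$, and then invoke the $k=0$ case of Proposition~\ref{prop:f_vect_of_d+2_points}. Your treatment is slightly more explicit about the general-position dichotomy, which the paper simply asserts.
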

\begin{proof}
Since the number of vertices of the spherical polytope $C_{d+2}\cap \bS^{d}_+$ is either $d+1$ (with probability $P(d)$) or $d+2$ (with probability $1-P(d)$), we have
$$
\E f_0(C_{d+2}\cap \bS^{d}_+) = (d+1)P(d) + (d+2)(1-P(d)) = (d+2) - P(d).
$$
On the other hand, the first formula of Proposition~\ref{prop:f_vect_of_d+2_points} with $k=0$  yields
$$
(d+2) - \E f_0 (C_{d+2}\cap \bS^{d}_+) = \pi^{-(d+1)}(d+2) \frac{\sqrt \pi\, \Gamma\left(\frac {d+2}2\right)}{\Gamma\left(\frac{d+3}{2}\right)} \cdot (d+1)^2 A[d,-1].
$$
Resolving this w.r.t.\ $P(d)$ we arrive at the required formula.
\end{proof}

The first few values of $P(d)$ are given in Table~\ref{tab:sylvester}.
For example, for four points on the two-dimensional half-sphere $\bS^2_+$, the probability that the convex hull is a spherical triangle is $P(2) = \frac{24}{\pi^2}-2 \approx 0.4317$.

\section{Proof of Theorem~\ref{theo:main}}
\subsection{Introduction}
The main difficulty is to prove Theorem~\ref{theo:main} in the case when the codimension $d-\ell$ is even. For future reference, we restate this special case of Theorem~\ref{theo:main} as follows.
\begin{theorem}\label{theo:main_even}
Let $\mathcal Z_d$ be the $d$-dimensional Poisson zero polytope with $d\in\N$. Then, for all $\ell\in \{0,\ldots,d\}$ such that $d-\ell$ is \textbf{even} we have
\begin{equation}
\E f_\ell(\mathcal Z_d) = \frac{\pi^{d-\ell}}{(d-\ell)!} A[d,d-l],
\end{equation}
where $A[d,d-\ell] = [x^{d-\ell}]  Q_d(x)$ and $Q_d(x)$ is the polynomial defined in~\eqref{eq:def_Q_n}.
\end{theorem}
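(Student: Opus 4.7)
The plan is to apply Mecke's formula for the Poisson hyperplane process, reduce the resulting integral to a pure spherical integral, and verify that it equals $A[d,d-\ell]$ by induction on $d$ keyed to the recursion satisfied by $A[n,k]$.

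First I would apply the multivariate Slivnyak--Mecke formula. Since $\mathcal Z_d=\bigcap_{H\in X}H^+$ where $H^+$ denotes the closed half-space bounded by $H$ that contains the origin, every $\ell$-face of $\mathcal Z_d$ is supported by a unique unordered $(d-\ell)$-tuple of hyperplanes of $X$. Mecke's formula then gives
\[
\E f_\ell(\mathcal Z_d)=\frac{1}{(d-\ell)!}\int_{A(d,d-1)^{d-\ell}}\P\bigl(E(H_1,\ldots,H_{d-\ell})\bigr)\,d\Theta(H_1)\cdots d\Theta(H_{d-\ell}),
\]
where $E$ is the event that the affine flat $L:=H_1\cap\cdots\cap H_{d-\ell}$ supports an $\ell$-face of the zero cell of $X\cup\{H_1,\ldots,H_{d-\ell}\}$. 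This event factors as a combinatorial condition on the orientations of the $H_i$ with respect to the origin, multiplied by the Poisson void probability that $X$ deposits no hyperplane separating the origin from the foot of the perpendicular from the origin to $L$.

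Next I would parameterize $H_i=H(w_i,\tau_i)$ with $w_i\in\Sd$ and change variables from $(\tau_1,\ldots,\tau_{d-\ell})$ to the foot of the perpendicular $p\in\lin(w_1,\ldots,w_{d-\ell})$ from the origin to $L$; the Jacobian is the $(d-\ell)$-dimensional volume of the parallelepiped spanned by $w_1,\ldots,w_{d-\ell}$. The void probability contributes an exponential in $\|p\|$ whose rate depends only on the angular configuration, so the radial integral collapses via the Gamma identity $\int_0^\infty r^{d-\ell-1}e^{-cr}\,dr=(d-\ell-1)!/c^{d-\ell}$. After collecting constants one arrives at a decomposition
\[
\E f_\ell(\mathcal Z_d)=\frac{\pi^{d-\ell}}{(d-\ell)!}\cdot I(d,d-\ell),
\]
with $I(d,d-\ell)$ an explicit spherical integral on $(\Sd)^{d-\ell}$ involving only angular quantities.

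The core task is then to show $I(d,d-\ell)=A[d,d-\ell]$. I would proceed by induction on $d$, using the base cases $d-\ell\in\{0,2\}$ (the former trivial, the latter reducing to the classical identity~\eqref{eq:E_f_d-2_Poisson_Poly}) and verifying the recursion $A[d,k]=A[d-2,k]+(d-1)^2\,A[d-2,k-2]$ from~\eqref{eq:rel_A_n_k_intro}. Geometrically, this recursion should correspond to peeling off one direction $w_{d-\ell}$ via a Funk- or Blaschke--Petkantschin-type identity, splitting the outcome into a ``spanning'' contribution (which drops two units of codimension) and a ``lower-dimensional'' contribution (which keeps the codimension the same while dropping $d$ by two); the coefficient $(d-1)^2$ is expected to materialize from a one-dimensional Gegenbauer integral on $\SS^{d-2}$.

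The main obstacle is precisely this last step: matching the integral-geometric peeling to the arithmetic recursion with the exact coefficient $(d-1)^2$. The fact that $A[d,2k]$ is the elementary symmetric polynomial $e_k$ of the squared odd (or even) integers is the combinatorial miracle of the theorem; producing these and only these factors from iterated spherical peelings, with no residuals, seems to require a subtle trigonometric identity. A cleaner route might be to study the generating polynomial $\sum_k I(d,k)\,x^k$ directly and show by a recursive argument that it equals $Q_d(x)$, exhibiting the product structure of $Q_d$ as the outcome of successive peelings rather than as an afterthought.
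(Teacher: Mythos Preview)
Your Mecke-formula setup is sound, but the claimed factorization of the event $E$ fails as soon as $\ell\ge 1$, and this is where the approach breaks down---well before the ``peeling'' step you flag as the main obstacle. The flat $L=H_1\cap\cdots\cap H_{d-\ell}$ supports an $\ell$-face of the augmented zero cell if and only if $L$ meets the zero cell $\mathcal Z_d'$ of the remaining process $X$. For $\ell=0$ this is indeed the void event ``no hyperplane of $X$ separates the origin from the point $L$'', and the radial integral collapses to a Gamma integral exactly as you describe. For $\ell\ge1$, however, $\{L\cap\mathcal Z_d'\neq\emptyset\}$ is \emph{not} equivalent to the foot $p$ of the perpendicular lying in $\mathcal Z_d'$: the flat can meet the cell far from $p$. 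Writing $\pi$ for the orthogonal projection onto $L_0^\perp$, the correct condition is $p\in\pi(\mathcal Z_d')$, and the projected body $\pi(\mathcal Z_d')$ is not the zero cell of any lower-dimensional Poisson hyperplane process; in particular $\P(p\in\pi(\mathcal Z_d'))$ is not of the form $e^{-c\|p\|}$. The radial integral therefore does not separate from the angular one, and the clean spherical quantity $I(d,d-\ell)$ never materializes. (Pursuing the Mecke integral honestly leads to the expected intrinsic volumes of $\mathcal Z_d$, which are no easier to compute directly than the face numbers.)

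The paper avoids this obstruction by a completely different route. It passes to the dual polytope $\conv\Pi_{d,1}$ via~\eqref{eq:E_f_ell_Z_d_Poisson_Process}, invokes a formula from~\cite{beta_polytopes} expressing $\E f_k(\conv\Pi_{d,1})$ as a finite sum of expected internal-angle sums $\tilde\JJ_{m,k+1}(m/2)$ of beta$'$ simplices (see~\eqref{eq:lim_E_f_k_C_n}), and then evaluates these angle sums explicitly (Proposition~\ref{prop:internal_explicit}). The evaluation is not by peeling a spherical integral: instead, the Gauss--Bonnet relation applied to the tangent cones of the beta$'$ simplex yields a triangular linear system linking the $\tilde\JJ$'s to the explicitly known external-angle sums $\tilde\II$'s (Proposition~\ref{prop:relations}); uniqueness of the solution reduces the problem to checking that the conjectured values satisfy the system, which is the combinatorial identity of Lemma~\ref{lemma:identity_A_B_1}. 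It is in the inductive proof of that identity---a telescoping argument in $n$, not $d$---that the dual recursions~\eqref{eq:rel_A_n_k_intro} and~\eqref{eq:rel_B_n_k_intro} for $A[n,k]$ and the trigonometric integrals $B\{n,k\}$ actually do the work and produce the factor $(n-1)^2$.
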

We shall prove Theorem~\ref{theo:main_even} in Sections~\ref{subsec:beta'_poly}--\ref{subsec:comb_identity}. In Section~\ref{subsec:odd_codim} we shall deduce the case of the odd codimension from the Dehn-Sommerville relations.

\subsection{Beta' polytopes}\label{subsec:beta'_poly}
Our proof of Theorem~\ref{theo:main_even} strongly relies on the results of the paper~\cite{beta_polytopes} whose notation we follow.
A random point $X$ in $\R^d$ is said to have the \textit{beta distribution} with parameter $\beta>-1$ if its density is given by
\begin{equation}\label{eq:def_f_beta}
f_{d,\beta}(x)=c_{d,\beta} \left( 1-\left\| x \right\|^2 \right)^\beta\ind_{\{\|x\| <  1\}},\qquad x\in\R^d,\qquad
c_{d,\beta}= \frac{ \Gamma\left( \frac{d}{2} + \b + 1 \right) }{ \pi^{ \frac{d}{2} } \Gamma\left( \beta+1 \right) }.
\end{equation}
Similarly, $X$ has \textit{beta' distribution} with parameter $\beta>d/2$ if its density has the form
\begin{equation}\label{eq:def_f_beta_prime}
\tilde{f}_{d,\beta}(x)=\tilde{c}_{d,\beta} \left( 1+\left\| x \right\|^2 \right)^{-\beta},\qquad
x\in\R^d,\qquad
\tilde{c}_{d,\beta}= \frac{ \Gamma\left( \beta \right) }{\pi^{ \frac{d}{2} } \G\left( \beta - \frac{d}{2} \right) }.
\end{equation}
In order to conform with the notation of~\cite{beta_polytopes}, we usually supply objects and quantities related to the beta' case with the tilde, even though we shall deal almost exclusively with the beta' case here.

Let $\tilde X_1,\tilde X_2,\ldots$ be i.i.d.\ random points in $\R^d$ with density $\tilde f_{d,\beta}$. The convex hull of $\tilde X_1,\ldots,\tilde X_n$ is called the \textit{beta' polytope} and denoted by $\tilde P_{n,d}^{\beta}:=[\tilde X_1,\ldots,\tilde X_n]$. These random polytopes were introduced in the works of Miles~\cite{miles} and Ruben and Miles~\cite{ruben_miles},  and further studied in~\cite{beta_polytopes_temesvari,beta_simplices,bonnet_etal,convex_hull_sphere,beta_polytopes}. In~\cite{beta_polytopes}, expected values of various functionals of these polytopes (including the $f$-vector as well as the internal and external angles) were expressed through quantities of two sorts. The quantities of the first sort, denoted by $\tilde I_{n,k}(\alpha)$, are given by the explicit formula
\begin{equation}\label{eq:I_definition_prime}
\tilde I_{n,k}(\alpha)
=\int_{-\pi/2}^{+\pi/2} \tilde c_{1, \frac{\alpha k + 1}{2}} (\cos x)^{\alpha k -1} \left(\int_{-\pi/2}^x \tilde c_{1,\frac{\alpha+1}{2}} (\cos y)^{\alpha-1} \dint y\right)^{n-k} \dint x,
\end{equation}
see~\cite[Remark 1.17]{beta_polytopes},
and are closely related to the external angles of the beta' polytopes; see~\cite[Theorem~1.16]{beta_polytopes}. The quantities of the second sort, denoted by $\tilde J_{n,k}(\beta)$, are defined as follows. Let $\tilde Z_1,\ldots,\tilde Z_n$ be $n$ independent random points in $\R^{n-1}$ distributed according to the density $\tilde f_{n-1,\beta}$, where $\beta>\frac{n-1}{2}$. Then, $\tilde J_{n,k}(\beta)$ is the expected internal angle of the simplex $[\tilde Z_1,\ldots, \tilde Z_n]$ at its face $[\tilde Z_1,\ldots,\tilde Z_{k}]$, for $k\in \{1,\ldots, n\}$. By definition, $\tilde J_{n,n}(\beta) = 1$.

For the purposes of the present paper, it will be more convenient to work with the quantities
$$
\tilde \II_{n,k}(\beta) := \binom nk \tilde I_{n,k}(\beta)
\quad \text{ and }\quad
\tilde \JJ_{n,k}(\beta) := \binom nk \tilde J_{n,k}(\beta),
\qquad k\in \{1,\ldots,n\}.
$$
Note that, by definition,  $\tilde \JJ_{n,k}(\beta)$ is the expected sum of internal angles at all $k$-vertex faces $[\tilde Z_{i_1},\ldots, \tilde Z_{i_k}]$ of the beta' simplex $[\tilde Z_1,\ldots,\tilde Z_n]\subset \R^{n-1}$ with $n$ vertices. On the other hand, $\tilde \II_{n,k}(2\beta-n+1)$ is the expected sum of external angles at all $k$-vertex faces of the same random simplex; see~\cite[Theorem~1.16]{beta_polytopes}.

\subsection{Expected internal angle sums}
It follows from~\eqref{eq:E_f_ell_Z_d_Poisson_Process} that to prove Theorem~\ref{theo:main_even} (which is trivial for $\ell=d$) it suffices to show that for all $d\in\N$ and all \textbf{odd} $k = d-\ell-1 \in \{1,\ldots,d-1\}$, we have
\begin{equation}\label{eq:E_f_k_Pi_d_1}
\E f_{k}(\conv \Pi_{d,1})
=
\frac{\pi^{k+1}}{(k+1)!} A[d,k+1],
\end{equation}
where $\conv \Pi_{d,1}$ is the convex hull of the Poisson process $\Pi_{d,1}$ defined in Section~\ref{subsec:convex_half_sphere}.
The starting point of our proof is the following explicit formula derived in~\cite[Theorem 1.21]{beta_polytopes} (where we have to put $\alpha=1$, see Section~\cite[Section~1.5]{beta_polytopes} for details):
\begin{equation}\label{eq:lim_E_f_k_C_n}
\E f_{k}(\conv \Pi_{d,1})  = \sum_{\substack{s=0,1,\ldots\\n:=d-2s\geq k+1}} \frac {2}{n} \pi^n \tilde c_{1, \frac{n+1}{2}} \tilde \JJ_{n,k+1}\left(\frac n2\right),
\end{equation}
for all $k\in \{0,\ldots,d-1\}$. The main contribution of the present paper is the evaluation of the expected internal angle sums $\tilde \JJ_{n,k+1}(n/2)$.

\begin{proposition}\label{prop:internal_explicit}
For all $n\in\N$ and \textbf{even} $k\in \{1,\ldots, n\}$, the expected sum of internal angles at faces with $k$ vertices of the beta' simplex $\tilde P_{n,n-1}^{n/2}\subset \R^{n-1}$ with $n$ vertices and $\beta=n/2$ is given by
\begin{equation}\label{eq:internal_explicit}
\tilde \JJ_{n,k}\left(\frac n2\right)
=
\frac{\pi^{k-n}}{k!}\cdot \frac{n}{2 \tilde c_{1, \frac{n+1}{2}}} \cdot (n-1)^2 A[n-2,k-2].
\end{equation}
\end{proposition}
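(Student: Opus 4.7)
The plan is to begin with the explicit integral representation for $\tilde \JJ_{n,k}(\beta)$ developed in~\cite{beta_polytopes}, specialize it to the critical parameter $\beta = n/2$, and then identify the resulting quantity with the coefficient extraction $(n-1)^2 A[n-2,k-2] = [x^{k-2}]\bigl((n-1)^2 Q_{n-2}(x)\bigr)$.

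First, I would invoke the single-integral formula for the expected sum of internal angles of a beta' simplex at its $k$-vertex faces. In spirit it is parallel to~\eqref{eq:I_definition_prime}: an outer integral over one angular variable $x \in [-\pi/2,\pi/2]$, with an inner antiderivative of a power of cosine whose exponent depends on $\beta$. At the distinguished value $\beta = n/2$, the relevant exponents become integers, so the inner antiderivative collapses to a finite trigonometric polynomial, and the prefactor $\tilde c_{1,(n+1)/2}^{-1}$ appears naturally as a normalization constant. After the change of variable $x \mapsto \pi/2 - x$, this reduces $\tilde \JJ_{n,k}(n/2)$ to an integral of the form $c_{n,k}\int_0^{\pi}(\sin x)^{k-1}P_{n,k}(x)\,dx$, where $P_{n,k}$ is a polynomial of degree $n-k$.

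Second, I would expand $P_{n,k}$ using the finite Fourier expansion of a power of cosine and apply repeated integration by parts to reduce the resulting integrals to the quantities $B\{n,k\}$ of~\eqref{eq:def_B_n_k_intro}. The dual recurrence~\eqref{eq:rel_B_n_k_intro} would then telescope the sum into a single coefficient of $Q_{n-2}$. The main obstacle is precisely this combinatorial matching: the cleanest route is likely induction on $n$ in steps of two, using $Q_n(x) = Q_{n-2}(x)\bigl(1 + (n-1)^2 x^2\bigr)$, which mirrors a step relating $\tilde \JJ_{n,k}(n/2)$ to $\tilde \JJ_{n-2,k}((n-2)/2)$ obtained by one integration by parts in the angular variable. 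The base cases $n \in \{k, k+2\}$ reduce to $\tilde \JJ_{k,k}(k/2) = 1$ and to a classical beta-type integral, respectively; keeping track of the $\beta$-dependent normalization constants across the inductive step is the most delicate part of the argument.
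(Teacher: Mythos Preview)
Your proposal rests on a starting point that is not available: there is no single-integral formula for $\tilde \JJ_{n,k}(\beta)$ in~\cite{beta_polytopes} analogous to~\eqref{eq:I_definition_prime} for $\tilde \II_{n,k}(\alpha)$. In that reference, $\tilde J_{n,k}(\beta)$ is \emph{defined} as the expected internal angle of a beta' simplex at a face, and no closed integral expression is derived for it; indeed, the paper explicitly remarks (just after Proposition~\ref{prop:internal_explicit}) that such formulae are developed only in subsequent work. So the step ``invoke the single-integral formula \ldots and specialize to $\beta=n/2$'' cannot be carried out, and everything downstream---the reduction to $\int_0^\pi(\sin x)^{k-1}P_{n,k}(x)\,dx$, the expansion into $B\{n,k\}$'s, and the telescoping---is left without a foundation.

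The paper takes a genuinely different route that avoids any direct integral representation of $\tilde\JJ_{n,k}$. It first establishes, via the Gauss--Bonnet relation for the conic intrinsic volumes of the tangent cone of the beta' simplex, a triangular linear system (Proposition~\ref{prop:relations}) coupling the unknown $\tilde\JJ_{m,k}(m/2)$'s to the explicitly known $\tilde\II_{n,m}(1)$'s. This system has $1$'s on the diagonal, hence a unique solution (Proposition~\ref{prop:unique1}). Proving Proposition~\ref{prop:internal_explicit} is thereby reduced to checking that the conjectured values~\eqref{eq:internal_explicit} satisfy the system, which, after rewriting $\tilde\II_{n,m}(1)$ via $B\{n,m\}$ (Lemma~\ref{lem:I_n_k_B_n_k}), becomes the combinatorial identity of Lemma~\ref{lemma:identity_A_B_1}. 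That identity is then proved by the induction on $n$ in steps of two that you allude to, driven by the dual recurrences~\eqref{eq:rel_A_n_k_intro} and~\eqref{eq:rel_B_n_k_intro}. In short, the key idea you are missing is to characterize $\tilde\JJ_{n,k}(n/2)$ implicitly through the McMullen-type angle-sum relations rather than explicitly through an integral formula.
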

\begin{remark}
Proposition~\ref{prop:internal_explicit} is true for odd $k$, too, but we shall be able to prove this only in Section~\ref{subsec:removing_parity},  after Theorem~\ref{theo:main} has been established without parity restrictions.
\end{remark}

\begin{remark}
In the subsequent publications~\cite{kabluchko_angles,kabluchko_algorithm,kabluchko_formula} we shall develop the methods of the present paper to compute $\tilde \JJ_{n,k}(\beta)$ and other related quantities for all $\beta>\frac{n-1}{2}$. The special case $\beta=\frac n2$ studied here is distinguished by especially nice combinatorial properties.
\end{remark}

\begin{proof}[Proof of Theorem~\ref{theo:main_even} given Proposition~\ref{prop:internal_explicit}]
Using~\eqref{eq:def_A_n_k}, it is easy to check that $A[n,k+1]-A[n-2,k+1] = (n-1)^2A[n-2,k-1]$ for all $n\in \{2,3,\ldots\}$ and $k\in\Z$; see, e.g.,\  Lemma~\ref{lem:rec_first_kind}.
Replacing $k$ by $k+1$ and using this relation, we can write~\eqref{eq:internal_explicit} as
$$
\tilde \JJ_{n,k+1}\left(\frac n2\right) = \frac{\pi^{k+1-n}}{(k+1)!}\cdot \frac{n}{2 \tilde c_{1, \frac{n+1}{2}}} \cdot (A[n,k+1] - A[n-2,k+1]),
$$
for all odd $k\in \{1,\ldots,n-1\}$.

We need to prove~\eqref{eq:E_f_k_Pi_d_1} for odd $k \in \{1,\ldots,d-1\}$. Plugging the above formula for $\tilde \JJ_{n,k+1}(n/2)$ into~\eqref{eq:lim_E_f_k_C_n}, we obtain that for all such $k$,
$$
\E f_{k}(\conv \Pi_{d,1})
=
\sum_{\substack{s=0,1,\ldots\\n:=d-2s\geq k+1}} \frac{\pi^{k+1}}{(k+1)!}\cdot  (A[n,k+1] - A[n-2,k+1])
=
\frac{\pi^{k+1}}{(k+1)!} A[d,k+1]
$$
because the last term in the telescope sum, which is either $-A[k-1,k+1]$ or $-A[k,k+1]$, vanishes. This establishes~\eqref{eq:E_f_k_Pi_d_1}.
\end{proof}

\subsection{System of equations for expected internal angles}
Our main task is thus to prove Proposition~\ref{prop:internal_explicit}. As a first step, we shall provide a system of relations between the quantities  $\tilde \II_{n,k}(\beta)$ and $\tilde \JJ_{n,k}(\beta)$ which leads to a recursive algorithm for computing $\tilde \JJ_{n,k}(\beta)$.

\begin{proposition}\label{prop:relations}
For every $n\in \{2,3,\ldots\}$, $k\in \{1,\ldots,n-1\}$ and for every $\beta>(n-1)/2$ we have
\begin{align}
&\sum_{\substack{s=0,1,\ldots\\ n-2s\geq k}} \tilde \II_{n,n-2s}(2\beta-n+1) \tilde \JJ_{n-2s,k} (\beta-s) = \frac 12 \binom nk,\label{eq:relation_U_V}\\
&\sum_{\substack{s=0,1,\ldots\\ n-2s-1\geq k}} \tilde \II_{n,n-2s-1}(2\beta-n+1) \tilde \JJ_{n-2s-1,k} \left(\beta - s - \frac 12\right) = \frac 12 \binom nk.\label{eq:relation_U_V_one_more}
\end{align}
\end{proposition}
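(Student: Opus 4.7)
The plan is to derive Proposition~\ref{prop:relations} from two complementary angle-sum identities applied to the random beta' simplex $\tilde P = [\tilde Z_1, \ldots, \tilde Z_n] \subset \R^{n-1}$, whose vertices are i.i.d.\ with density $\tilde f_{n-1, \beta}$. Fix a $(k-1)$-dimensional face $S = [\tilde Z_1, \ldots, \tilde Z_k]$ and consider the chain of faces $F$ with $S \subseteq F \subseteq \tilde P$. The key inputs are (a) the Gram--Sommerville/conic Gauss--Bonnet identity
\[
\sum_{F:\, S \subseteq F \subseteq \tilde P} \beta(S, F)\, \alpha(F, \tilde P) \;=\; 1,
\]
expressing that the angles around $S$ fill the ambient sphere, together with (b) an Euler-type alternating companion
\[
\sum_{F:\, S \subseteq F \subseteq \tilde P} (-1)^{\dim F}\, \beta(S, F)\, \alpha(F, \tilde P) \;=\; 0,
\]
where $\beta(S, F)$ is the internal angle of $F$ at $S$ and $\alpha(F, \tilde P)$ is the external angle of $\tilde P$ at $F$ (normalised so that the full solid angle equals $1$). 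Adding and subtracting (a) and (b) splits the face sum into two parts indexed by the parity of $|F|$, each equal to $\frac{1}{2}$.

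Next, I would take expectations over the vertices and invoke the conditioning property of beta' polytopes proved in~\cite{beta_polytopes}: conditional on a subset of size $m$ being the vertex set of a face of $\tilde P$, those vertices are i.i.d.\ beta'-distributed in their $(m-1)$-dimensional affine hull with parameter $\beta - (n - m)/2$. The expected angle product then factors, for each generic $m$-face $F \supseteq S$, as
\[
\E[\beta(S, F)\, \alpha(F, \tilde P)] \;=\; \tilde I_{n, m}(2\beta - n + 1)\, \cdot\, \tilde J_{m, k}(\beta - (n - m)/2),
\]
with the $\tilde I$ factor identified via the canonical external-angle formula~\cite[Theorem~1.16]{beta_polytopes} and the $\tilde J$ factor being, by definition, the expected internal angle of a beta' $(m-1)$-simplex at its $(k-1)$-face. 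Summing over the $\binom{n-k}{m-k}$ faces of size $m$ containing $S$, then over the $\binom{n}{k}$ choices of $S$, and using $\binom{n}{k}\binom{n-k}{m-k} = \binom{n}{m}\binom{m}{k}$, converts each parity-class identity into exactly \eqref{eq:relation_U_V} or \eqref{eq:relation_U_V_one_more}, with $m = n - 2s$ and $m = n - 2s - 1$ respectively.

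The main obstacle is justifying the alternating identity (b): while (a) is a standard theorem in polytope theory, its Euler-type partner is more delicate and typically requires either a Sommerville relation for spherical polytopes or an ad hoc argument exploiting the integral representation of $\tilde I_{n, k}(\alpha)$ in~\eqref{eq:I_definition_prime}. I expect the cleanest route to use the substitution $y \mapsto -y$ in the inner integrand, which produces a sign depending on the parity of $n - m$ and so directly separates the even and odd contributions. An alternative path, should (b) prove awkward to set up in the required random setting, is to derive the two identities from two distinct applications of the canonical formula in~\cite{beta_polytopes} to two related random cones (for example, $\pos(\tilde Z_1, \ldots, \tilde Z_n)$ versus a symmetrised or augmented version), producing the parity split without ever invoking (b) explicitly.
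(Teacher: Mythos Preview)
Your approach is essentially the same as the paper's, just phrased in the language of Brianchon--Gram/Sommerville relations rather than conic intrinsic volumes. The paper applies the conic Gauss--Bonnet relation
\[
\sum_{\substack{j\geq 0\\ j\equiv d\!\!\pmod 2}} \upsilon_j(C) \;=\; \sum_{\substack{j\geq 0\\ j\not\equiv d\!\!\pmod 2}} \upsilon_j(C) \;=\; \tfrac12
\]
to the tangent cone $\tilde T_{n,k}^\beta$ of the beta' simplex at the face $[\tilde Z_1,\ldots,\tilde Z_k]$, and then cites \cite[Theorem~1.18]{beta_polytopes} for the identity
\[
\E\,\upsilon_j(\tilde T_{n,k}^\beta)\;=\;\tbinom{n}{k}^{-1}\,\tilde\II_{n,j+1}(2\beta-n+1)\,\tilde\JJ_{j+1,k}\!\left(\beta-\tfrac{n-1-j}{2}\right).
\]
Since $\upsilon_j$ of the tangent cone is exactly $\sum_{F\supseteq S,\ \dim F=j}\beta(S,F)\,\alpha(F,\tilde P)$, your identities (a) and (b) are precisely the sum and alternating sum of the conic intrinsic volumes, i.e.\ equivalent to the two halves of Gauss--Bonnet. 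Your worry about (b) being ``delicate'' is therefore misplaced: it is the standard Sommerville relation (or, in conic language, the vanishing of the conic Euler characteristic for a cone that is not a linear subspace), and holds deterministically for every simplex with $k<n$; no integral trick or symmetrised cone is needed. One small correction: in the simplex every $m$-subset is automatically a face, so no ``conditioning on being a face'' is required; the nontrivial content of the factorisation $\E[\beta(S,F)\,\alpha(F,\tilde P)]=\tilde I_{n,m}\cdot\tilde J_{m,k}$ is that the internal and external angles, though not independent, decouple in expectation for beta' vertices---and that is exactly what \cite[Theorem~1.18]{beta_polytopes} supplies.
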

\begin{proof}
Given a  $d$-dimensional polyhedral cone $C$, we denote by  $\upsilon_{0}(C),\ldots,\upsilon_d(C)$  its conic intrinsic volumes. For their definition and a review of their properties we refer to~\cite{ALMT14,AmelunxenLotzDCG17} (whose notation we follow) and to~\cite[Section~6.5]{SW08} (where slightly different notation is used).  Here we shall need only the
\textit{Gauss-Bonnet relation}~\cite[Equation~(5.3)]{ALMT14} which states that
$$
\sum_{\substack{s\geq 0\\ j := d-2s\geq 0}} \upsilon_j(C) = \sum_{\substack{s\geq 0\\ j := d-2s-1\geq 0}} \upsilon_j(C) = \frac 12
$$
for every $d$-dimensional polyhedral cone $C$ that is not a linear subspace.

Consider the $(n-1)$-dimensional beta' simplex $\tilde P_{n,n-1}^\beta$ defined as the convex hull $[\tilde X_1,\ldots,\tilde X_{n}]$ of $n$ independent random points $\tilde X_1,\ldots,\tilde X_{n}$ having the probability density $\tilde f_{n-1,\beta}$ on $\R^{n-1}$. The tangent cone at its $k$-vertex face $G=[\tilde X_1,\ldots,\tilde X_k]$ is defined as
$$
\tilde T_{n,k}^\beta:= \{v\in\R^{n-1}: \text{ there exists } \eps>0 \text{ such that } g_0 + \eps v\in \tilde P_{n,n-1}^\beta\},
$$
where $g_0$ is any point in the relative interior of $G$, for example $g_0= (\tilde X_1+\ldots+\tilde X_k)/k$.

The expected conic intrinsic volumes of the tangent cone were computed in~\cite[Theorem~1.18]{beta_polytopes}: For all $k\in \{1,\ldots,n-1\}$ and $j\in \{k-1,\ldots,n-1\}$ we have
\begin{equation}\label{eq:E_intrinsic_tangent_cone}
\E \upsilon_j(\tilde T_{n,k}^\beta) =\frac 1 {\binom {n}{k}}\tilde \II_{n,j+1}(2\beta-n+1) \tilde \JJ_{j+1,k}\left(\beta - \frac{n-1-j}{2}\right).
\end{equation}
For $j\notin \{k-1,\ldots,n-1\}$ we have $\E \upsilon_j(\tilde T_{n,k}^\beta)=0$. In particular, all intrinsic volumes with $j<k-1$ vanish, which is due to the fact that the lineality space of the tangent cone, defined as the intersection of $\tilde T_{n,k}^\beta$ with $-\tilde T_{n,k}^\beta$, coincides with the affine hull of $G$ shifted to the origin and has dimension $k-1$.

Applying the Gauss-Bonnet relation to the tangent cone $\tilde T_{n,k}^\beta$ and taking the expectation, we arrive at the required relation~\eqref{eq:relation_U_V}.
\end{proof}

In view of the interpretation of $\tilde \II_{n,k}(\beta)$ and $\tilde \JJ_{n,k}(\beta)$ as expected sums of internal/external angles, relations~\eqref{eq:relation_U_V} and~\eqref{eq:relation_U_V_one_more} can be seen as a stochastic version of McMullen's non-linear angle-sum relations~\cite{mcmullen} in the setting of beta' polytopes.

The above proposition leads to a recursive algorithm which can be used to compute the quantities $\tilde \JJ_{n,k}(\beta)$, both numerically and exactly. First, recall  that $\tilde \JJ_{n,n}(\beta)=1$ by definition. Separating in~\eqref{eq:relation_U_V} the term with $s=0$ and noting that $\tilde \II_{n,n}(2\beta-n+1)=1$ by~\eqref{eq:I_definition_prime}, we can write
$$
\tilde \JJ_{n,k} (\beta) =  \frac 12 \binom nk  - \sum_{\substack{s=1,2,\ldots\\ n-2s\geq k}} \tilde \II_{n,n-2s}(2\beta-n+1) \tilde \JJ_{n-2s,k} (\beta-s)
$$
for $k\in \{1,\ldots,n-1\}$.  This gives an expression for $\tilde \JJ_{n,k} (\beta)$ in terms of the quantities of the form $\tilde \JJ_{\ell,k} (\alpha)$ with $\ell<n$ and the quantities of the form $\tilde \II_{n,k}(\alpha)$ for which we have explicit expression~\eqref{eq:I_definition_prime}. Proceeding recursively, we can compute $\tilde \JJ_{n,k}(\beta)$.

Using this algorithm together with~\eqref{eq:lim_E_f_k_C_n} we computed exactly the expected $f$-vectors of the Poisson zero polytopes in low dimensions.   Then, we guessed the formula stated in Theorem~\ref{theo:main_even} by the method of trials and errors.

\subsection{Uniqueness of the solution and reduction to a combinatorial identity}
To prove Proposition~\ref{prop:internal_explicit}, we shall proceed as follows. First, we shall observe that the system of linear equations~\eqref{eq:relation_U_V}, which is triangular with $1$'s on the diagonal,  determines the unknown quantities $\tilde \JJ_{n,k} (\beta)$ uniquely. This will be stated more precisely in the next proposition. Thus, in order to prove Proposition~\ref{prop:internal_explicit}, it suffices to check that equation~\eqref{eq:relation_U_V} continues to hold if we replace $\tilde \JJ_{n,k} (n/2)$ by their conjectured values given by the right-hand side of~\eqref{eq:internal_explicit}. This reduces Proposition~\ref{prop:internal_explicit} to certain combinatorial identity which will be verified in Section~\ref{subsec:comb_identity}.

First of all, we are interested in the particular case $\beta= n/2$ of the above setting, in which case equation~\eqref{eq:relation_U_V} simplifies to
\begin{equation}\label{eq:relation_U_V_simple}
\sum_{\substack{s=0,1,\ldots\\ n-2s\geq k}} \tilde \II_{n,n-2s}(1) \tilde \JJ_{n-2s,k} \left(\frac n2-s\right) = \frac 12 \binom nk,
\end{equation}
for all $n\in \{2,3,\ldots\}$, $k\in \{1,\ldots,n-1\}$.
\begin{proposition}\label{prop:unique1}
Consider the following system of linear equations for the unknown quantities $v_{n,k}$, where $n\in \{2,3,\ldots\}$ and $k$ is an even number in $\{1,\ldots,n\}$:
\begin{align}
&\sum_{\substack{s=0,1,\ldots\\ n-2s\geq k}} \tilde \II_{n,n-2s}(1) v_{n-2s,k} = \frac 12 \binom nk,
\text{ for all } n\in \{2,3,\ldots\}, \; k\in 2\N,\; k<n, \label{eq:systA1}\\
&v_{n,n}=1, \text{ for all even } n\in \{2,4,6,\ldots\}.\label{eq:systA2}
\end{align}
Then, the unique solution of this system is given by $v_{n,k}=\tilde \JJ_{n,k} (n/2)$, for all $n\in \{2,3,\ldots\}$ and all even $k\in\{1,\ldots,n\}$.
\end{proposition}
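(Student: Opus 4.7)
The plan is to prove Proposition~\ref{prop:unique1} in two stages: first, verify that $v_{n,k} := \tilde{\JJ}_{n,k}(n/2)$ solves the system; second, argue uniqueness by recognizing the system as triangular with unit diagonal entries.

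For existence, equation~\eqref{eq:systA1} for $v_{n,k} = \tilde{\JJ}_{n,k}(n/2)$ is precisely the specialisation of~\eqref{eq:relation_U_V_simple} (i.e.\ \eqref{eq:relation_U_V} with $\beta = n/2$), which is already available from Proposition~\ref{prop:relations}. Condition~\eqref{eq:systA2} holds because $\tilde{J}_{n,n}(\beta)=1$ by definition, so $\tilde{\JJ}_{n,n}(n/2) = \binom{n}{n}\cdot 1 = 1$.

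For uniqueness, the crucial observation is that $\tilde{\II}_{n,n}(1) = 1$. This follows directly from~\eqref{eq:I_definition_prime} with $\alpha = 1$ and $k = n$: the inner integral disappears (empty product), leaving $\tilde{c}_{1,(n+1)/2}\int_{-\pi/2}^{\pi/2}(\cos x)^{n-1}\dint x$, which equals $1$ by the normalisation of the one-dimensional beta' density (equivalently, of the beta density~\eqref{eq:def_f_beta} after a $\cos$-substitution). Isolating the $s=0$ term in~\eqref{eq:systA1} and using $\tilde{\II}_{n,n}(1)=1$ gives
\begin{equation*}
v_{n,k} = \tfrac12\binom{n}{k} - \sum_{\substack{s\geq 1 \\ n-2s\geq k}} \tilde{\II}_{n,n-2s}(1)\, v_{n-2s,k},
\end{equation*}
which expresses $v_{n,k}$ as a linear function of the values $v_{m,k}$ with $m<n$ and $m\equiv n\pmod 2$.

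Uniqueness then follows by induction on $n$, separately for each fixed even $k\in 2\N$, within each parity class of $n\geq k$. If $n$ is even, the recursion terminates at $n=k$, where the base case $v_{k,k}=1$ is supplied by~\eqref{eq:systA2}. If $n$ is odd (so $n \geq k+1$), the recursion terminates at $n = k+1$: the sum contains only the $s=0$ term because $n-2s = k-1 < k$ already for $s=1$, so the equation immediately forces $v_{k+1,k} = \tfrac12\binom{k+1}{k} = (k+1)/2$. In both cases the remaining values are determined uniquely by ascending on $n$ in steps of $2$. I do not expect a real obstacle here: the argument is an elementary triangular-system induction, and the only non-combinatorial input is the normalisation identity $\tilde{\II}_{n,n}(1)=1$, which is a single line.
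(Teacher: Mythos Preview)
Your proof is correct and follows essentially the same approach as the paper: verify existence via~\eqref{eq:relation_U_V_simple} and the definition $\tilde\JJ_{n,n}(\beta)=1$, then prove uniqueness by isolating the $s=0$ term (using $\tilde\II_{n,n}(1)=1$) and inducting on $n$. The only cosmetic difference is that you run the induction separately for each fixed even $k$ and each parity class of $n$, whereas the paper inducts on $n$ for all admissible $k$ at once; the content is identical.
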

\begin{proof}
Since $v_{n,k}=\tilde \JJ_{n,k} (n/2)$ is indeed a solution according to~\eqref{eq:relation_U_V_simple}, it remains to show that the solution is unique. This will be done by induction. For the base case $n=2$, note that the quantity $v_{2,2}=1$
is determined uniquely by~\eqref{eq:systA2}.
Assume now that $n\in \{3,4,\ldots\}$ and that we have shown that the quantities $v_{m,\ell}$ are uniquely determined by~\eqref{eq:systA1}, \eqref{eq:systA2} for all $m\in \{2,3,\ldots,n-1\}$ and all even $\ell\in \{1,\ldots,m\}$. We are going to show that $v_{n,k}$ are determined uniquely for all  even $k\in \{1,\ldots,n\}$. If $n$ is even and $n=k$, then $v_{n,n}=1$ by~\eqref{eq:systA2}. So, let $k\in 2\N$ be even with $k<n$. Then, separating the term with $s=0$ in~\eqref{eq:systA1} and observing that $\tilde \II_{n,n}(1)=1$, we can express $v_{n,k}$ through $v_{m,\ell}$'s
with $m=n-2s$ strictly smaller than $n$, thus completing the induction.
\end{proof}


In view of the above, in order to prove Proposition~\ref{prop:internal_explicit}, it suffices to check that for all $n\in \{2,3,\ldots\}$, and all even $k\in \{1,\ldots,n-1\}$
\begin{equation}\label{eq:need_to_prove1}
\sum_{\substack{s=0,1,\ldots\\ n-2s\geq k}} \left(\tilde \II_{n,n-2s}(1)\cdot
\frac{\pi^{k-n+2s}}{k!}\cdot \frac{n-2s}{\tilde c_{1, \frac{n-2s+1}{2}}} \cdot (n-2s-1)^2 \cdot A[n-2s-2,k-2]\right) =  \binom nk.
\end{equation}
Sections~\ref{subsec:B_n_k_def}--\ref{subsec:comb_identity} are devoted to the proof of~\eqref{eq:need_to_prove1}.

\subsection{Definition of \texorpdfstring{$B\{n,k\}$}{B\{n,k\}}}\label{subsec:B_n_k_def}
For $n\in \N$ and $k\in\{1,\ldots,n\}$  consider the numbers
\begin{equation}\label{eq:def_B_n_k}
B\{n,k\} := \frac {1}{(k-1)!(n-k)!} \int_0^\pi (\sin x)^{k-1} x^{n-k} \dint x.
\end{equation}
Note that $B\{n,1\} = \pi^n/n!$.
The values of $B\{n,k\}$ for small $n$ and $k$ are given in Table~\ref{tab:B_n_k}.
Let us extend this definition by putting
\begin{equation}\label{eq:def_B_n_k_complement}
B\{n,k\}
:=
\begin{cases}
\pi^n/n!, &\text{ for all } n\in\N, \; k=0,\\
0, & \text{ for all } n\in\N, \; k\in \{n+1,n+2,\ldots\}.
\end{cases}
\end{equation}
The next lemma expresses $\tilde \II_{n,k}(1)$ through $B\{n,k\}$.
\begin{lemma}\label{lem:I_n_k_B_n_k}
For all $n\in\N$ and $k\in \{1,\ldots, n\}$ we have
$$
\tilde \II_{n,k}(1) = \binom{n}{k}\tilde I_{n,k}(1) = \frac {n!}k  \tilde c_{1, \frac{k+1}{2}} \pi^{k-n} B\{n, k\}.
$$
\end{lemma}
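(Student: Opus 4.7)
The plan is a direct calculation starting from the definition of $\tilde I_{n,k}(\alpha)$ in~\eqref{eq:I_definition_prime} with $\alpha=1$. First I would evaluate the constant $\tilde c_{1,1}$ appearing inside the inner integral. From~\eqref{eq:def_f_beta_prime} one reads off
\[
\tilde c_{1,1} = \frac{\Gamma(1)}{\sqrt \pi \, \Gamma(1/2)} = \frac{1}{\pi},
\]
so that for $\alpha = 1$ the integrand $(\cos y)^{\alpha - 1} = 1$ is trivial, and the inner integral evaluates to
\[
\int_{-\pi/2}^x \tilde c_{1,1} \, \dint y = \frac{x + \pi/2}{\pi}.
\]
Substituting this back into~\eqref{eq:I_definition_prime}, I obtain
\[
\tilde I_{n,k}(1) = \tilde c_{1,\frac{k+1}{2}} \int_{-\pi/2}^{+\pi/2} (\cos x)^{k-1} \left(\frac{x + \pi/2}{\pi}\right)^{n-k} \dint x.
\]

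The next step is the change of variable $u = x + \pi/2$, under which $\cos x = \sin u$ and the interval $[-\pi/2,\pi/2]$ maps to $[0,\pi]$. This gives
\[
\tilde I_{n,k}(1) = \tilde c_{1,\frac{k+1}{2}} \, \pi^{k-n} \int_0^\pi (\sin u)^{k-1} u^{n-k} \dint u.
\]
Comparing with the definition~\eqref{eq:def_B_n_k} of $B\{n,k\}$, the last integral equals $(k-1)!(n-k)! \, B\{n,k\}$, hence
\[
\tilde I_{n,k}(1) = \tilde c_{1,\frac{k+1}{2}} \, \pi^{k-n} \, (k-1)!(n-k)! \, B\{n,k\}.
\]

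Finally, multiplying by $\binom{n}{k} = \frac{n!}{k!(n-k)!}$ and using $\frac{(k-1)!}{k!} = \frac{1}{k}$ collapses the binomial coefficient and the factorials into $n!/k$, yielding exactly
\[
\tilde \II_{n,k}(1) = \frac{n!}{k} \, \tilde c_{1,\frac{k+1}{2}} \, \pi^{k-n} \, B\{n,k\},
\]
as claimed. There is no genuine obstacle here: the only substantive observation is that the choice $\alpha = 1$ makes the inner integrand constant, which is precisely what turns the nested integral of~\eqref{eq:I_definition_prime} into the single integral defining $B\{n,k\}$; the rest is bookkeeping with the substitution $u = x + \pi/2$ and the factorial identity $\binom{n}{k}(k-1)!(n-k)! = n!/k$.
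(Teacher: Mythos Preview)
Your proof is correct and follows essentially the same route as the paper: evaluate $\tilde c_{1,1}=1/\pi$ so the inner integral collapses to $(x+\pi/2)/\pi$, substitute $u=x+\pi/2$ to turn $\cos$ into $\sin$ and the interval into $[0,\pi]$, and then identify the result with $B\{n,k\}$. The only cosmetic difference is that you spell out the final factorial bookkeeping $\binom{n}{k}(k-1)!(n-k)!=n!/k$ explicitly, whereas the paper leaves this implicit.
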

\begin{proof}
Using~\eqref{eq:I_definition_prime} with $\alpha=1$, the fact that $\tilde c_{1,1} = 1/\pi$ (see~\eqref{eq:def_f_beta_prime}) and finally the variable change $x= \varphi-\frac \pi2$, we obtain
\begin{align*}
\tilde I_{n,k}(1)
=
\int_{-\pi/2}^{+\pi/2} \tilde c_{1,\frac{k+1}{2}} (\cos x)^{k-1} \left(\frac{x}{\pi} + \frac {1}{2}\right)^{n-k} \dint x
=
\tilde c_{1,\frac{k+1}{2}} \pi^{k-n} \int_{0}^{\pi}  (\sin \varphi)^{k-1} \varphi^{n-k} \dint \varphi,
\end{align*}
which proves the claim after recalling that $\tilde \II_{n,k}(1)= \binom nk \tilde I_{n,k}(1)$.
\end{proof}

In view of Lemma~\ref{lem:I_n_k_B_n_k}, we can rewrite~\eqref{eq:need_to_prove1} in the following form:
\begin{equation}\label{eq:need_to_prove2}
\sum_{\substack{s=0,1,\ldots \\ n-2s\geq k}} B\{n,n-2s\}  (n-2s-1)^2 A[n-2s-2,k-2] = \frac{\pi^{n-k}}{(n-k)!},
\end{equation}
for all $n\in \{2,3,\ldots\}$ and all even $k\in \{1,\ldots,n-1\}$. Our task is to prove~\eqref{eq:need_to_prove2}.

\subsection{Recurrence relations for \texorpdfstring{$A[n,k]$}{A[n,k]} and \texorpdfstring{$B\{n,k\}$}{B\{n,k\}}}
First we establish recurrence relations for $A[n,k]$ and $B\{n,k\}$. These are similar to the relations satisfied by the Stirling numbers $\stirling{n}{k}$ and $\stirlingb{n}{k}$; see~\eqref{eq:stirling_relations}.

\begin{lemma}\label{lem:rec_first_kind}
For all $n\in\N_0$ and all $k\in\Z$ we have
\begin{equation}\label{eq:A_rec}
A[n+2,k] - A[n,k] = (n+1)^2 A[n,k-2].
\end{equation}
\end{lemma}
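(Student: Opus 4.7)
The plan is to reduce the recurrence to a single polynomial identity for the generating polynomials $Q_n(x)$, and then read off the coefficient identity using linearity of $[x^k]$.

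\textbf{Key observation.} I would first prove that for every $n \in \N_0$,
$$
Q_{n+2}(x) = Q_n(x) \cdot \bigl(1 + (n+1)^2 x^2\bigr).
$$
This is immediate from the definition \eqref{eq:def_Q_n}: the product defining $Q_{n+2}(x)$ ranges over $j \in \{1,\ldots,n+1\}$ with $j \not\equiv n+2 \Mod 2$. Since $n+2 \equiv n \Mod 2$, this set equals
$$
\bigl\{ j \in \{1,\ldots,n-1\} : j \not\equiv n \Mod 2 \bigr\} \cup \{n+1\},
$$
the new element $n+1$ contributing the extra factor $1+(n+1)^2 x^2$. The two base cases $n=0$ and $n=1$ are checked by hand ($Q_2 = 1+x^2$, $Q_3 = 1 + 4x^2$).

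\textbf{Case split and coefficient extraction.} With the factorisation in hand, I would split according to the parity of $k$. If $k$ is even, then $k-2$ is even too, and both $A[n+2,k]$ and $A[n,k]$, $A[n,k-2]$ are read off directly from $Q_{n+2}$ and $Q_n$. Linearity of the coefficient functional gives
$$
A[n+2,k] = [x^k] Q_{n+2}(x) = [x^k]Q_n(x) + (n+1)^2 [x^{k-2}]Q_n(x) = A[n,k] + (n+1)^2 A[n,k-2].
$$
If $k$ is odd, then $k-2$ is also odd, and crucially $n$ and $n+2$ share the same parity, so the definition \eqref{eq:def_A_n_k} uses the \emph{same} prefactor $\Phi_n(x) \in \{\tanh(\pi/(2x)),\ \cotanh(\pi/(2x))\}$ for both $A[n+2,k]$ and $A[n,k]$. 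Multiplying through by $\Phi_n(x)$ and again applying $[x^k]$ and $[x^{k-2}]$ to the identity $\Phi_n(x)Q_{n+2}(x) = \Phi_n(x)Q_n(x)\cdot(1+(n+1)^2 x^2)$ yields the same recurrence.

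\textbf{Expected obstacle.} There is essentially no analytic obstacle here; the identity reduces entirely to combinatorics of the index set and linearity of coefficient extraction in formal Laurent series. The only minor bookkeeping to be careful about is that $[x^k]$ is applied to a formal series in both positive and negative powers of $x$ (since $\tanh(\pi/(2x))$ and $\cotanh(\pi/(2x))$ expand into such series), so one must verify that multiplication by the Laurent polynomial $1+(n+1)^2 x^2$ is well-defined term-by-term — which it plainly is. This justifies the lemma for all $n \in \N_0$ and all $k \in \Z$, including the negative values of $k$ needed later.
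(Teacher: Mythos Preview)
Your proposal is correct and follows essentially the same approach as the paper: both arguments rest on the factorisation $Q_{n+2}(x) = (1+(n+1)^2 x^2)\,Q_n(x)$ and then extract coefficients, with the odd-$k$ case handled identically because $n$ and $n+2$ share the same parity (so the same $\tanh$/$\cotanh$ prefactor appears on both sides). Your write-up is in fact slightly more explicit than the paper's, which dispatches the odd case in a single sentence.
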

\begin{proof}
Let $k\in\Z$ be even. By definition of $A[n,k]$, see~\eqref{eq:def_A_n_k} and~\eqref{eq:def_Q_n}, we have
\begin{align*}
A[n+2,k] &=  [x^k] \Big((1+ (n+1)^2x^2) (1+(n-1)^2 x^2) (1+(n-3)^2 x^2) \ldots\Big),\\
A[n,k] &=  [x^k] \Big((1+ (n-1)^2x^2) (1+(n-3)^2 x^2) (1+(n-5)^2 x^2) \ldots\Big).
\end{align*}
Subtracting these identities, we obtain
\begin{align*}
A[n+2,k] - A[n,k]
&=
[x^k] \Big((n+1)^2x^2(1+(n-1)^2 x^2) (1+(n-3)^2 x^2) \ldots\Big)\\
&=
(n+1)^2 A[n,k-2],
\end{align*}
thus proving the claim. The proof in the case when $k$ is odd is similar and does not use any special properties of the functions $\tanh$ and $\cotanh$.
\end{proof}

Recall that the numbers $B\{n,k\}$ were defined in~\eqref{eq:def_B_n_k} and~\eqref{eq:def_B_n_k_complement}.
\begin{lemma}\label{lem:rec_second_kind}
For all $n\in\N$ and $k\in \{2,3,\ldots\}$, we have
\begin{equation}\label{eq:B_rec}
B\{n,k-2\} - B\{n,k\} = (k-1)^2 B\{n+2,k\}.
\end{equation}
\end{lemma}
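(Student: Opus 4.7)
The plan is to derive the recurrence directly from the integral representation of $B\{n,k\}$ by applying integration by parts twice. The starting point is the linear ODE satisfied by $f(x) := (\sin x)^{k-1}$, which follows from direct differentiation together with $\cos^2 x = 1 - \sin^2 x$:
\begin{equation*}
f''(x) + (k-1)^2 f(x) = (k-1)(k-2)(\sin x)^{k-3}, \qquad k \geq 3.
\end{equation*}
Multiplying by $x^{n+2-k}$ and integrating over $[0, \pi]$ relates three integrals of the form appearing in the definition of $B\{m, j\}$: the $(k-1)^2 f$ term gives $\int_0^\pi (\sin x)^{k-1} x^{n+2-k} \dd x = (k-1)!\,(n-k+2)!\, B\{n+2, k\}$, while the right-hand side gives $(k-1)(k-2)\int_0^\pi (\sin x)^{k-3} x^{n+2-k} \dd x = (k-1)!\,(n-k+2)!\, B\{n, k-2\}$, using $(k-1)(k-2)(k-3)! = (k-1)!$.

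The heart of the argument is the second-derivative term $\int_0^\pi x^{n+2-k} f''(x)\, \dd x$, which I would integrate by parts twice to transfer both derivatives from $f$ onto the power $x^{n+2-k}$. For $k \geq 3$, both pairs of boundary terms vanish, since $(\sin x)^{k-2}$ and $(\sin x)^{k-1}$ are both zero at $x = 0$ and $x = \pi$. Hence the term collapses to $(n+2-k)(n+1-k) \int_0^\pi x^{n-k} (\sin x)^{k-1}\, \dd x = (k-1)!\,(n+2-k)!\, B\{n, k\}$. Collecting the three contributions and dividing by the common factor $(k-1)!\,(n+2-k)!$ produces exactly the claimed identity $(k-1)^2 B\{n+2, k\} = B\{n, k-2\} - B\{n, k\}$ for $k \geq 3$.

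The case $k = 2$ must be handled separately, because the inhomogeneous term in the ODE disappears and the boundary contribution in the first integration by parts no longer vanishes (since $(\sin x)^{0} = 1$ at the endpoints, making the $(k-3)! = (-1)!$ in $B\{n,k-2\}$ meaningless). I would treat this by integrating $\int_0^\pi x^n \sin x\, \dd x$ by parts twice directly: the first step picks up the boundary contribution $[-x^n \cos x]_0^\pi = \pi^n$, and the second step produces $-n(n-1) \int_0^\pi x^{n-2} \sin x\, \dd x$ with no further boundary terms. Dividing by $n!$ yields $B\{n+2, 2\} = \pi^n/n! - B\{n, 2\} = B\{n, 0\} - B\{n, 2\}$, which matches the convention $B\{n, 0\} = \pi^n/n!$. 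For $k > n$ the identity holds trivially from the convention $B\{n, k\} = 0$, except at $k \in \{n+1, n+2\}$, where it again follows from the same IBP formula with vanishing coefficients harmlessly killing any undefined terms. The only genuine obstacle is careful bookkeeping of the boundary terms, in particular aligning the $k = 2$ case with the $B\{n, 0\}$ convention; beyond this, the argument is routine integration by parts.
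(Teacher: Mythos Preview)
Your proof is correct and follows essentially the same route as the paper: both arguments reduce to integrating by parts twice, split off the case $k=2$ because of the nonvanishing boundary term, and treat $k\ge n+3$ trivially via the convention $B\{n,k\}=0$. Your packaging via the ODE $f''+(k-1)^2 f=(k-1)(k-2)(\sin x)^{k-3}$ for $f=(\sin x)^{k-1}$ is a tidy way to organise the same computation the paper carries out by hand, and your observation that the factor $(n+2-k)(n+1-k)$ vanishes for $k\in\{n+1,n+2\}$ lets you absorb those two boundary cases into the main argument, whereas the paper verifies them separately through Wallis-type recursions; this is a small economy but not a genuinely different method.
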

\begin{proof}
\textit{Case 1.}
To begin with, let us assume that $k\in \{3,\ldots,n\}$. Integrating by parts, we obtain
\begin{align*}
B\{n,k\}
&=
\frac {1}{(k-1)!(n-k)!} \int_0^\pi (\sin x)^{k-1} \dint \left(\frac{x^{n-k+1}}{n-k+1}\right)\\
&=
-\frac {1}{(k-1)!(n-k+1)!} \int_0^\pi x^{n-k+1} (k-1) (\sin x)^{k-2} (\cos x) \dint x.
\end{align*}
Applying partial integration for the second time, we arrive at
\begin{align*}
B\{n,k\}
&=
-\frac {1}{(k-2)!(n-k+1)!} \int_0^\pi (\sin x)^{k-2} (\cos x) \dint \left(\frac{x^{n-k+2}}{n-k+2}\right)\\
&=
\frac {1}{(k-2)!(n-k+2)!} \int_0^\pi x^{n-k+2} \frac {\dint }{\dint x}\Big((\sin x)^{k-2} (\cos x)\Big) \dint x\\
&=
\frac {1}{(k-2)!(n-k+2)!} \int_0^\pi x^{n-k+2} \Big((k-2)(\sin x)^{k-3}(\cos x)^2 - (\sin x)^{k-1}\Big) \dint x.
\end{align*}
Observe that we required $k\geq 3$ because, for $k=2$, the term $x^{n-k+2}(\sin x)^{k-2}(\cos x)$ appearing in the partial integration formula does not vanish at $x=\pi$.

Using the identity $\cos^2 x= 1-\sin^2 x$ and simplifying, we arrive at
\begin{multline*}
B\{n,k\}
=
\frac {(k-2)}{(k-2)!(n-k+2)!} \int_0^\pi x^{n-k+2} (\sin x)^{k-3} \dint x
\\-
\frac {(k-1)}{(k-2)!(n-k+2)!} \int_0^\pi x^{n-k+2} (\sin x)^{k-1} \dint x.
\end{multline*}
We now easily recognize that the first term on the right-hand side is $B\{n,k-2\}$, whereas the second term is $(k-1)^2 B\{n+2,k\}$. Thus, we proved that $B\{n,k\} = B\{n,k-2\} - (k-1)^2 B\{n+2,k\}$ for $k\in \{3,\ldots,n\}$. 

\vspace*{2mm}
\noindent
\textit{Case 2.}
If $k\geq  n+3$, then all terms in~\eqref{eq:B_rec} vanish by definition.

\vspace*{2mm}
\noindent
\textit{Case 3.} If $k=n+2$ or $k=n+1$, then $B\{n,k\}=0$ by definition and we need to verify that
$$
B\{n,n\} = (n+1)^2B\{n+2,n+2\}
\qquad \text{ and } \qquad
B\{n,n-1\} = n^2 B\{n+2,n+1\}
$$
for all $n\in\N$.
The second identity holds for $n=1$ since $B\{1,0\}=\pi = B\{3,2\}$, and we ignore this case in the following.  Recalling the definition of $B\{n,k\}$ stated in~\eqref{eq:def_B_n_k}, we can write these identities as
\begin{align}
&\int_0^\pi (\sin x)^{n+1} \dd x = \frac {n}{n+1} \int_0^\pi (\sin x)^{n-1} \dd x,
\qquad n\in\N,\label{eq:verify_sin_int_1}\\
\qquad
&\int_0^\pi x (\sin x)^{n} \dd x = \frac {n-1}{n} \int_0^\pi x (\sin x)^{n-2} \dd x,
\qquad n\geq 2.\label{eq:verify_sin_int_2}
\end{align}
To verify~\eqref{eq:verify_sin_int_1}, we  use partial integration as follows:
$$
\int_0^\pi (\cos^2 x) (\sin x)^{n-1} \dd x
=
\frac 1n  \int_0^\pi (\cos x)  \dd (\sin x)^n
=
\frac 1n  \int_0^\pi (\sin x) (\sin x)^{n}  \dd x.
$$
Replacing $\cos^2 x$ by $1-\sin^2 x$ on the left-hand side, we arrive at~\eqref{eq:verify_sin_int_1}. To verify~\eqref{eq:verify_sin_int_2}, write
\begin{multline*}
\int_0^\pi x (\cos^2 x) (\sin x)^{n-2} \dd x
=
\frac 1{n-1}  \int_0^\pi (x\cos x)  \dd (\sin x)^{n-1}\\
=
\frac 1{n-1}  \int_0^\pi (x\sin x-\cos x) (\sin x)^{n-1}  \dd x
=
\frac 1{n-1}  \int_0^\pi x(\sin x)^{n}  \dd x
\end{multline*}
because $\int_0^\pi (\cos x) (\sin x)^{n-1}  \dd x=0$.  Replacing $\cos^2 x$ by $1-\sin^2 x$ on the left-hand side, we arrive at~\eqref{eq:verify_sin_int_2}.

\vspace*{2mm}
\noindent
\textit{Case 4.}
Let $k=2$. If $n=1$, identity~\eqref{eq:B_rec} takes the form $B\{1,0\} = B\{3,2\}$, which is true because both terms are equal to $\pi$. In the case when $n\geq 2$,  we need to verify the identity
$
\pi^n/n! = B\{n,2\} + B\{n+2,2\}
$,
or, after recalling~\eqref{eq:def_B_n_k} and multiplying by $n!$,
$$
\pi^n = \int_0^\pi (\sin x) (n(n-1)x^{n-2} + x^n) \dd x,\qquad n\geq 2.
$$
This is an easy exercise in partial integration:
\begin{multline*}
\int_0^\pi (\sin x) n(n-1)x^{n-2} \dd x
=
\int_0^\pi n (\sin x)  \dd x^{n-1}
=
- \int_0^\pi n (\cos x) x^{n-1} \dd x
=
- \int_0^\pi (\cos x)  \dd x^{n}\\
=
- (\cos x) x^n \Big |_0^\pi -  \int_0^\pi (\sin x) x^{n} \dd x
=
\pi^n - \int_0^\pi (\sin x) x^{n} \dd x .
\end{multline*}
\end{proof}

\subsection{The basic combinatorial identity}\label{subsec:comb_identity}
In the next lemma we prove~\eqref{eq:need_to_prove2}, thereby completing the proof of Proposition~\ref{prop:internal_explicit} and Theorem~\ref{theo:main_even}.
\begin{lemma}\label{lemma:identity_A_B_1}
For all $n\in\N$ and all \textbf{even} $k\in \{1,2,\ldots,n-1\}$,
\begin{equation}\label{eq:identity}
\sum_{\substack{s=0,1,\ldots \\ n-2s\geq k}} B\{n,n-2s\}  (n-2s-1)^2 A[n-2s-2,k-2] = \frac{\pi^{n-k}}{(n-k)!}.
\end{equation}
\end{lemma}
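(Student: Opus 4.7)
Denote the left-hand side of~\eqref{eq:identity} by $L_{n,k}$ and the right-hand side by $R_{n,k}:=\pi^{n-k}/(n-k)!$. The idea is to prove the diagonal invariance
\begin{equation*}
L_{n,k} = L_{n+2,\, k+2} \qquad \text{for all admissible } (n,k),
\end{equation*}
which, since $R_{n,k} = R_{n+2,\,k+2}$ trivially, reduces the problem to the single base case $k=2$: iterating gives $L_{n,k} = L_{n-k+2,\,2}$ (legal because $k$ even and $k \leq n-1$ are both preserved by the shift), so it suffices to verify $L_{n',2} = R_{n',2}$ for all $n' \geq 3$.

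\textbf{Proof of the invariance.} Setting $m := n-2s$ and using the $A$-recurrence of Lemma~\ref{lem:rec_first_kind} in the form $(m-1)^2 A[m-2, k-2] = A[m,k] - A[m-2, k]$, split $L_{n,k}$ into two pieces. In the second piece $A[m-2,k]$ vanishes unless $m \geq k+2$; reindexing $m \mapsto m+2$ and recombining yields
\begin{equation*}
L_{n,k} = B\{n,n\}\, A[n,k] + \sum_{\substack{m \equiv n \Mod{2} \\ k \leq m \leq n-2}} \bigl(B\{n,m\} - B\{n,m+2\}\bigr)\, A[m,k].
\end{equation*}
Apply the $B$-recurrence of Lemma~\ref{lem:rec_second_kind} to rewrite the difference as $(m+1)^2 B\{n+2,m+2\}$, and then reindex $m \mapsto m-2$. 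The resulting sum is precisely the defining sum for $L_{n+2,\,k+2}$ \emph{minus} its top term at $m=n+2$, namely $(n+1)^2 B\{n+2,n+2\}\, A[n,k]$. Lemma~\ref{lem:rec_second_kind} applied with $k$ replaced by $n+2$ (and the convention $B\{n,n+2\}=0$) gives the boundary identity $B\{n,n\} = (n+1)^2 B\{n+2,n+2\}$, so this missing top term is exactly compensated by the standalone $B\{n,n\}\, A[n,k]$, and $L_{n,k} = L_{n+2,\,k+2}$ follows.

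\textbf{Base case $k=2$.} Since $A[m-2,0] = 1$ for every $m \geq 2$, the identity to prove is
\begin{equation*}
L_{n,2} = \sum_{\substack{m \equiv n \Mod{2} \\ 2 \leq m \leq n}} (m-1)^2\, B\{n,m\} = \frac{\pi^{n-2}}{(n-2)!}.
\end{equation*}
Lemma~\ref{lem:rec_second_kind} with $n$ replaced by $n-2$ reads $(m-1)^2 B\{n,m\} = B\{n-2,m-2\} - B\{n-2,m\}$, so the sum telescopes to $B\{n-2,\,m_0-2\} - B\{n-2,n\}$, where $m_0 \in \{2,3\}$ is the smallest admissible $m$. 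The second term vanishes by the convention $B\{n-2,n\}=0$ from~\eqref{eq:def_B_n_k_complement}, while the first equals $\pi^{n-2}/(n-2)!$: for even $n$ this is $B\{n-2,0\}$ by convention, and for odd $n$ it is $B\{n-2,1\} = \frac{1}{(n-3)!}\int_0^\pi x^{n-3}\dd x = \pi^{n-2}/(n-2)!$ directly from~\eqref{eq:def_B_n_k}.

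\textbf{Main obstacle.} The subtle point is the diagonal invariance: the $A$- and $B$-recurrences shift their arguments in different directions, and the Abel-type reindexing must interlock so that the single stray boundary term in the reindexed sum is killed precisely by the boundary identity $B\{n,n\}=(n+1)^2 B\{n+2,n+2\}$. Once this has been established, the reduction to $k=2$ and the ensuing telescope are essentially automatic.
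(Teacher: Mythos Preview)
Your proof is correct and uses essentially the same ingredients as the paper's: both arguments exploit the $A$-recurrence (Lemma~\ref{lem:rec_first_kind}) and the $B$-recurrence (Lemma~\ref{lem:rec_second_kind}) to move along the diagonal $n-k=\mathrm{const}$, together with the boundary identity $B\{n,n\}=(n+1)^2 B\{n+2,n+2\}$ and the values $B\{n-2,0\}=B\{n-2,1\}=\pi^{n-2}/(n-2)!$ to anchor the computation.

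The organizational difference is that the paper runs an induction on $n$ (assuming~\eqref{eq:identity} for $n$ and all even $k$, proving it for $n+2$; the special case $k=2$ is handled inside the induction step), whereas you extract the diagonal step as a standalone, induction-free identity $L_{n,k}=L_{n+2,k+2}$ and then reduce to the single base line $k=2$, which telescopes. Your packaging is arguably cleaner---the two recurrences are applied once each, and the boundary bookkeeping is isolated---but the mathematical content is the same.
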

\begin{remark}
Another formula of the same type will be established in Lemma~\ref{lemma:identity_A_B_2}, where we also remove the parity restriction.
\end{remark}
\begin{proof}[Proof of Lemma~\ref{lemma:identity_A_B_1}]
We argue by induction, assuming the identity for some $n$ and proving it for $n+2$. 

\vspace*{2mm}
\noindent
\textit{Base cases.} We start by verifying the cases $n=3$ and $n=4$ (because for $n=1$ and $n=2$ the set of admissible $k$'s is empty).


\vspace*{2mm}
\noindent
\textit{Case $n=3$.} Then, $k=2$ and~\eqref{eq:identity} turns into
$
B\{3,3\}  2^2  A[1,0] = \pi,
$
which is true because $A[1,0]=1$ and $B\{3,3\} = \pi/4$.

\vspace*{2mm}
\noindent
\textit{Case $n=4$.} Then, $k=2$ and~\eqref{eq:identity} turns into
$$
B\{4,4\} 3^2 A[2,0] +  B\{4,2\} 1^2 A[0,0] = \frac{\pi^2}{2},
$$
which is true because $A[2,0]=A[0,0]=1$, while $B\{4,4\}=2/9$ and $B\{4,2\} = \pi^2/2-2$.

\vspace*{2mm}
\noindent
\textit{Induction assumption.}
Assume that identity~\eqref{eq:identity} holds for some $n\in\{3,4,\ldots\}$ and all even $k\in \{1,\ldots, n-1\}$.
By Lemma~\ref{lem:rec_first_kind}, we have
$$
(n-2s-1)^2 A[n-2s-2,k-2] = A[n-2s,k] - A[n-2s-2,k],
$$
so that we can write the induction assumption in the form
$$
\sum_{\substack{s=0,1,\ldots \\ n-2s\geq k}} B\{n,n-2s\} \Big( A[n-2s,k] - A[n-2s-2,k] \Big) = \frac{\pi^{n-k}}{(n-k)!}
$$
or, more conveniently,
\begin{equation}\label{eq:ind_ass_convenient}
\sum_{\substack{s=0,1,\ldots \\ n-2s\geq k}} B\{n,n-2s\}  A[n-2s,k] =
\frac{\pi^{n-k}}{(n-k)!} + \sum_{\substack{s=0,1,\ldots \\ n-2s\geq k}} B\{n,n-2s\} A[n-2s-2,k],
\end{equation}
for all even $k\in \{1,\ldots,n-1\}$.

\vspace*{2mm}
\noindent
\textit{Induction step.}
We need to prove that  identity~\eqref{eq:identity} holds with $n$ replaced by $n+2$, that is
\begin{equation}\label{eq:induction_need_to_prove}
S:=\sum_{\substack{s=0,1,\ldots \\ n+2-2s\geq k}} B\{n+2,n+2-2s\}  (n-2s+1)^2 A[n-2s,k-2] = \frac{\pi^{n+2-k}}{(n+2-k)!},
\end{equation}
for all even $k\in \{1,\ldots,n+1\}$.
By Lemma~\ref{lem:rec_second_kind}, for all $s$ such that $n+2-2s\geq k\geq 2$ we have
$$
(n-2s+1)^2 B\{n+2,n+2-2s\} = B\{n,n-2s\} - B\{n,n-2s+2\}.
$$
Inserting this into the above definition of $S$, we obtain
\begin{align*}
S
&=
\sum_{\substack{s=0,1,\ldots \\ n+2-2s\geq k}}  \left(B\{n,n-2s\} - B\{n,n-2s+2\}\right) A[n-2s,k-2]\\
&=
\sum_{\substack{s=0,1,\ldots \\ n-2s\geq k-2}}  B\{n,n-2s\} A[n-2s,k-2]
-
\sum_{\substack{s=0,1,\ldots \\ n+2-2s\geq k}}  B\{n,n-2s+2\} A[n-2s,k-2].
\end{align*}
Let  $k\neq 2$. To the first sum we apply the induction assumption~\eqref{eq:ind_ass_convenient} with $k$ replaced by $k-2$ (which is an even number in the range $\{1,\ldots,n-1\}$):
\begin{align*}
S
=
\frac{\pi^{n-(k-2)}}{(n-(k-2))!}
&
+ \sum_{\substack{s=0,1,\ldots \\ n-2s\geq k-2}} B\{n,n-2s\} A[n-2s-2,k-2]\\
&-
\sum_{\substack{s=0,1,\ldots \\ n-2s\geq k-2}}  B\{n,n-2s+2\} A[n-2s,k-2].
\end{align*}
Introducing the new summation index $s':=s-1$ in the second sum and leaving the first sum unchanged, we obtain
\begin{align*}
S
=
\frac{\pi^{n-(k-2)}}{(n-(k-2))!}
&
+ \sum_{\substack{s=0,1,\ldots \\ n-2s\geq k-2}} B\{n,n-2s\} A[n-2s-2,k-2]\\
&-
\sum_{\substack{s'=-1,0,\ldots \\ n-2s'\geq k}}  B\{n,n-2s'\} A[n-2s'-2,k-2].
\end{align*}
The sums on the right-hand side differ by just two terms corresponding to $s'=-1$ (in the second sum)  and $s$ such that $n-2s\in \{k-1,k-2\}$ (in the first sum). The term with $s'=-1$ is $B\{n,n+2\} A[n,k-2]$, which vanishes by definition.
The term in the first sum for which $n-2s\in \{k-1,k-2\}$ also vanishes because then $n-2s-2\in \{k-3, k-4\}$ and consequently $A[n-2s-2,k-2] = 0$. So, the sums cancel each other and we are left with
$$
S
=
\frac{\pi^{n+2-k}}{(n+2-k)!},
$$
which verifies~\eqref{eq:induction_need_to_prove}.   To complete the induction, it remains to check the case $k=2$. Since $A[n-2s,0]=1$, we have
$$
S=
\sum_{\substack{s=0,1,\ldots \\ n-2s\geq 0}}  B\{n,n-2s\}
-
\sum_{\substack{s=0,1,\ldots \\ n-2s\geq 0}}  B\{n,n-2s+2\}.
$$
Again, the sums differ by just two terms. One of them  is $-B\{n,n+2\}=0$. The other term is $B\{n,0\}=\pi^n/n!$ (if $n$ is even) or $B\{n,1\} = \pi^n/n!$ (if $n$ is odd). In both cases, we have $S= \pi^n/n!$, which completes the induction.
\end{proof}

The proof of Theorem~\ref{theo:main_even} is thus complete.

\begin{remark}
For odd $k$, there is just one place where the above proof of Lemma~\ref{lemma:identity_A_B_1} breaks down. Namely, for odd $k$ we would need to verify the case $k=1$ (instead of $k=2$). The corresponding  expression for $S$ involves the non-trivial and ``ugly'' terms $A[n-2s,-1]$ instead of the trivial values $A[n-2s,0]=1$.
\end{remark}

\subsection{Proof of Theorem~\ref{theo:main} for odd codimension \texorpdfstring{$d-\ell$}{d-l}}\label{subsec:odd_codim}
Let us fix $d\in\N$ and introduce the quantities
$$
g_{\ell} := \frac{(d-\ell)!}{\pi^{d-\ell}} \E f_\ell (\mathcal Z_d),\qquad \ell\in \{0,\ldots,d\}.
$$
We already know from Theorem~\ref{theo:main_even} that
\begin{equation}\label{eq:g_ell_even_codim}
g_\ell = A[d,d-\ell] = [x^{d-\ell}]Q_d(x)
\end{equation}
provided $d-\ell$ is even. Our aim is to identify the numbers $g_\ell$ in the case when $d-\ell$ is odd. More precisely, we claim that
\begin{equation}\label{eq:g_l_need}
g_\ell
=
A[d,d-\ell]
=
\begin{cases}
[x^{d-\ell}] \left(\tanh \left(\frac{\pi}{2x}\right)  \cdot  Q_d(x)\right), &\text{ if $\ell$ is odd and $d$ is even},\\
[x^{d-\ell}] \left(\cotanh \left(\frac{\pi}{2x}\right) \cdot Q_d(x)\right), &\text{ if $\ell$ is even and $d$ is odd}.
\end{cases}
\end{equation}
To prove this, we shall use the Dehn-Sommerville relations~\eqref{eq:dehn_sommerville_Ef} which take the form
\begin{equation}\label{eq:dehn_sommer_g_l}
g_\ell = \sum_{i=0}^\ell (-1)^i g_i \frac{\pi^{\ell-i}}{(\ell-i)!},
\end{equation}
for all $l\in \{0,\ldots,d\}$.
Multiplying~\eqref{eq:dehn_sommer_g_l} by $x^\ell$ and taking the sum over $\ell\in \{0,\ldots,d\}$, we obtain the relation
$$
\sum_{\ell = 0}^d g_\ell x^\ell = \sum_{\ell = 0}^d  \sum_{i=0}^\ell \left((-1)^i g_i x^i\right) \left(\frac{(\pi x)^{\ell-i}}{(\ell-i)!}\right)
=
\left(\sum_{i = 0}^d (-1)^i g_i x^i \right)
\eee^{\pi x} + x^{d+1} R(x),
$$
where $R(x) = a_{0} +  a_{1}x + \ldots$ is some power series.
Introducing the generating polynomials
$$
G_{\text{even}}(x) := \sum_{\substack{r=0,2,4,\ldots\\r\leq d}} g_r x^r,
\qquad
G_{\text{odd}}(x) := \sum_{\substack{r=1,3,5,\ldots\\ r\leq d}} g_r x^r,
$$
we can write this as
$$
G_{\text{even}}(x) + G_{\text{odd}}(x) = (G_{\text{even}}(x) - G_{\text{odd}}(x))\eee^{\pi x} + x^{d+1} R(x).
$$
After some transformations, we arrive at
\begin{equation}\label{eq:G_even_odd}
G_{\text{even}}(x) (\eee^{\pi x} - 1)  + x^{d+1} R(x)  =  G_{\text{odd}}(x) (\eee^{\pi x} + 1).
\end{equation}

\vspace*{2mm}
\noindent
\textit{Case 1: $d$ is even.} From~\eqref{eq:g_ell_even_codim} we know that $g_\ell = A[d,d-\ell]$ provided $\ell\in \{0,\ldots,d\}$ is even. Therefore,
$$
x^d G_{\text{even}}\left(\frac 1x\right)
=
\sum_{\substack{\ell=0,2,4,\ldots\\ \ell\leq d}} g_\ell x^{d-\ell}
=
\sum_{\substack{k=0,2,4,\ldots\\ k\leq d}} A[d,k] x^k
=
Q_d(x),
$$
where we recall that $Q_d(x)$ is given by~\eqref{eq:def_Q_n}. From~\eqref{eq:G_even_odd} it follows that
$$
x^d G_{\text{odd}}\left(\frac 1x\right)
=
x^d G_{\text{even}}\left(\frac 1x\right) \cdot \frac{\eee^{\pi/x} - 1}{\eee^{\pi/x} + 1} + \frac 1x R\left(\frac 1x\right)
=
Q_d(x) \tanh \left(\frac{\pi}{2x}\right) +  \frac 1x R\left(\frac 1x\right).
$$
Note that $\frac 1x R(\frac 1x)$ does not contain any nonnegative powers of $x$. Comparing the coefficients, we arrive at
$$
g_\ell
=
[x^{d-\ell}] \left(x^d G_{\text{odd}}\left(\frac 1x\right)\right)
=
[x^{d-\ell}] \left(Q_d(x) \tanh \left(\frac{\pi}{2x}\right) \right)
$$
provided $\ell\in \{0,\ldots,d\}$ is odd. This proves the first case of~\eqref{eq:g_l_need}.

\vspace*{2mm}
\noindent
\textit{Case 2: $d$ is odd.} By~\eqref{eq:g_ell_even_codim}, we already know that $g_\ell = A[d,d-\ell]$ provided that $\ell\in \{0,\ldots,d\}$ is odd.  Hence,
$$
x^{d} G_{\text{odd}}\left(\frac 1x\right)
=
\sum_{\substack{\ell=1,3,5,\ldots\\ \ell\leq d}} g_\ell x^{d-\ell}
=
\sum_{\substack{k=0,2,4,\ldots\\ k\leq d}} A[d,k] x^k
=
Q_d(x).
$$
From~\eqref{eq:G_even_odd} it follows that
$$
x^d G_{\text{even}}\left(\frac 1x\right)
=
x^d G_{\text{odd}}\left(\frac 1x\right)\cdot  \frac{\eee^{\pi/x} + 1}{\eee^{\pi/x} - 1} - \frac 1x R\left(\frac 1x\right)
=
Q_d(x) \cotanh \left(\frac{\pi}{2x}\right) -  \frac 1x R\left(\frac 1x\right).
$$
Comparing the coefficients, we obtain
$$
g_\ell
=
[x^{d-\ell}] \left(x^d G_{\text{even}}\left(\frac 1x\right)\right)
=
[x^{d-\ell}] \left(Q_d(x) \cotanh \left(\frac{\pi}{2x}\right) \right)
$$
provided $\ell\in \{0,\ldots,d\}$ is even.  This proves the second case of~\eqref{eq:g_l_need} and completes the proof of Theorem~\ref{theo:main}.
\hfill $\Box$

\section{Further proofs}

\subsection{Expected internal angle sums: Removing parity restrictions}\label{subsec:removing_parity}
Now that we proved Theorem~\ref{theo:main} without parity restrictions, we are able to show that the formula for the quantities $\tilde \JJ_{n,k}(\frac n2)$ stated in Proposition~\ref{prop:internal_explicit} holds for all $k\in \{1,\ldots,n\}$ irrespective of the parity.
\begin{proposition}\label{prop:internal_explicit_no_parity}
For all $n\in \N$ and $k\in \{1,\ldots, n\}$ we have
\begin{align}
\tilde \JJ_{n,k}\left(\frac n2\right)
&=
\frac{\pi^{k-n}}{k!}\cdot \frac{n}{2 \tilde c_{1, \frac{n+1}{2}}} \cdot (A[n,k] - A[n-2,k]),
\label{eq:internal_explicit_no_parity1}\\
&=
\frac{\pi^{k-n}}{k!}\cdot \frac{n}{2 \tilde c_{1, \frac{n+1}{2}}} \cdot (n-1)^2 A[n-2,k-2],\label{eq:internal_explicit_no_parity2}
\end{align}
where the numbers $A[n,k]$, $n\in\N_0$, $k\in\Z$, are given by~\eqref{eq:def_A_n_k} and~\eqref{eq:def_Q_n}.
\end{proposition}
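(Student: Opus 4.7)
My plan is to exploit the fact that Theorem~\ref{theo:main} is now established without parity restrictions and to read off $\tilde \JJ_{n,k}(n/2)$ directly by inverting a triangular linear system. For a fixed integer $k'\in\N$, set
$$F(d) := \frac{\pi^{k'}}{k'!}A[d,k'], \qquad G(n) := \frac{2\pi^n}{n}\,\tilde c_{1,(n+1)/2}\,\tilde\JJ_{n,k'}(n/2).$$
Theorem~\ref{theo:cor}, now valid for every $k'$, combined with the representation~\eqref{eq:lim_E_f_k_C_n}, asserts that
$$F(d)=\sum_{\substack{s\geq 0\\ n:=d-2s\geq k'}} G(n)\qquad\text{for every integer } d\geq k'.$$
As $d$ runs through $\{k',k'+1,k'+2,\ldots\}$, the index $n$ takes all values in the same set (of both parities), and the system is triangular in the unknowns $G(n)$.

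Next I would invert the system by differencing. For $d\geq k'+2$, subtracting the equation for $d-2$ from the one for $d$ isolates a single term, giving
$$G(d) = F(d) - F(d-2) = \frac{\pi^{k'}}{k'!}\bigl(A[d,k']-A[d-2,k']\bigr).$$
For the boundary values $d\in\{k',k'+1\}$, the sum on the right-hand side consists of a single term so $G(d)=F(d)$; the same difference formula persists if we read $A[d-2,k']=0$ when $d-2<k'$, consistent with the convention $A[n,k]=0$ for $k>n$ recorded after Theorem~\ref{theo:main}. Unwrapping the definition of $G(d)$ and renaming $d\mapsto n$, $k'\mapsto k$, this is precisely~\eqref{eq:internal_explicit_no_parity1}, and~\eqref{eq:internal_explicit_no_parity2} is then an immediate application of Lemma~\ref{lem:rec_first_kind}.

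The only subtlety is consistency at the boundary $n=k$, where the formula must return the definitional value $\tilde\JJ_{k,k}(k/2)=1$. By Lemma~\ref{lem:rec_first_kind} together with $A[k-2,k]=0$, the right-hand side collapses to $A[k,k]/\bigl(2(k-1)!\,\tilde c_{1,(k+1)/2}\bigr)$; substituting the explicit value $A[k,k]=2^{-k}(k!)^2/\Gamma(k/2+1)^2$ from Proposition~\ref{prop_A_n_k_algorithm} and applying the Legendre duplication formula for $\Gamma(k+1)$, this simplifies to $1$ as required. No genuine combinatorial obstacle arises: all of the hard analysis lies in the proof of Theorem~\ref{theo:main}, and the present proposition is obtained by a short algebraic inversion of a triangular system; the only step to watch is the interpretation of $A[d-2,k']$ at the two boundary values of $d$, which is settled by the vanishing convention for $A[n,k]$ with $k>n$.
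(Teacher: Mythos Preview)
Your proposal is correct and follows essentially the same approach as the paper: both proofs combine Theorem~\ref{theo:main} (equivalently, Theorem~\ref{theo:cor}) with the representation~\eqref{eq:lim_E_f_k_C_n}, then take a difference in the first index (your $F(d)-F(d-2)$ is the paper's subtraction of~\eqref{eq:tech2} from~\eqref{eq:tech1}) to isolate a single $\tilde\JJ$-term. Your boundary treatment for $d\in\{k',k'+1\}$ via single-term sums and the convention $A[d-2,k']=0$ is equivalent to the paper's direct verification of the cases $k=n$ and $k=n-1$; your final consistency check at $n=k$ is a pleasant sanity check but is not logically required, since $G(k')=F(k')$ is already a consequence of the derivation.
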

\begin{remark}\label{rem:convention2}
In the case $n=k=1$ we recall the conventions $0^2 A[-1,-1] = 2/\pi$ and $A[-1,1] = 0$.
\end{remark}
\begin{proof}[Proof of Proposition~\ref{prop:internal_explicit_no_parity}]
First of all, note that~\eqref{eq:internal_explicit_no_parity1} and~\eqref{eq:internal_explicit_no_parity2} are equivalent  by Lemma~\ref{lem:rec_first_kind} and, in the case $n=k=1$, by Remark~\ref{rem:convention2}.
The proof is essentially a repetition of the proof of Theorem~\ref{theo:main_even} (given after Proposition~\ref{prop:internal_explicit}) in reversed order. Using Theorem~\ref{theo:main} (with even or odd codimension!), recalling~\eqref{eq:E_f_ell_Z_d_Poisson_Process} and finally applying~\eqref{eq:lim_E_f_k_C_n}, we obtain
\begin{equation}\label{eq:tech1}
\frac{\pi^k}{k!} A[n,k] = \E f_{n-k}(\mathcal Z_{n}) = \E f_{k-1}(\conv \Pi_{n,1})  = \sum_{\substack{s=0,1,\ldots\\m:=n-2s\geq k}} \frac {2}{m} \pi^m \tilde c_{1, \frac{m+1}{2}} \tilde \JJ_{m,k}\left(\frac m2\right),
\end{equation}
for all $n\in\N$ and $k\in \{1,\ldots, n\}$. Assuming that $n\geq 3$ and replacing $n$ by $n-2$, we can write
\begin{equation}\label{eq:tech2}
\frac{\pi^k}{k!} A[n-2,k]  = \sum_{\substack{s=0,1,\ldots\\m:=n-2-2s\geq k}} \frac {2}{m} \pi^m \tilde c_{1, \frac{m+1}{2}} \tilde \JJ_{m,k}\left(\frac m2\right),
\end{equation}
for all $k\in \{1,\ldots, n-2\}$. Subtracting~\eqref{eq:tech1} and~\eqref{eq:tech2}, we arrive at the required identity~\eqref{eq:internal_explicit_no_parity1} for all $k\in \{1,\ldots,n-2\}$. The remaining cases $k=n$ and $k=n-1$ (which also cover $n=1,2$) can be verified directly since $\tilde \JJ_{n,n}(n/2)=1$ and $\tilde \JJ_{n,n-1}(n/2)=n/2$. The formulae for $A[n,n]$ and $A[n,n-1]$ can be found in Proposition~\ref{prop_A_n_k_algorithm}~(ii).
\end{proof}

\subsection{Proof of Theorem~\ref{theo:spherical_polytope_f_vector}}
As it was observed in~\cite{convex_hull_sphere}, the $f$-vector of $C_n\cap \bS^d_+$ has the same distribution as the $f$-vector of the beta' polytope $\tilde P_{n,d}^{(d+1)/2}$. Indeed, the intersection of the random cone $C_n$ with the hyperplane $\{x_{0}=1\}$ (which is the tangent hyperplane to the half-sphere $\bS^{d}_+$ at its north pole) has the same distribution as the random polytope $\tilde P_{n,d}^{(d+1)/2}$; see~\cite[Proposition~2.2]{convex_hull_sphere}.

For the beta' polytope $\tilde P_{n,d}^{(d+1)/2}$ it was shown in~\cite[Theorem~1.14]{beta_polytopes} that
\begin{equation}\label{eq:f_vect_beta_prime}
\E f_k(\tilde P_{n,d}^{(d+1)/2}) = 2 \sum_{\substack{s=0,1,\ldots \\ d-2s\geq k+1}} \tilde \II_{n,d-2s}(1) \tilde \JJ_{d-2s,k+1}\left(\frac d2 -s\right)
\end{equation}
for all $k\in\{0,\ldots,d-1\}$.
By Lemma~\ref{lem:I_n_k_B_n_k} and Proposition~\ref{prop:internal_explicit_no_parity},
\begin{align*}
\tilde \II_{n,d-2s}(1)
&=
\frac {n!}{d-2s}  \tilde c_{1, \frac{d-2s+1}{2}} \pi^{d-2s-n} B\{n, d-2s\},\\
\tilde \JJ_{d-2s,k+1}\left(\frac d2-s\right)
&=
\frac{\pi^{k+1-d+2s}}{(k+1)!}\cdot \frac{d-2s}{2 \tilde c_{1, \frac{d-2s+1}{2}}} \cdot (d-2s-1)^2 A[d-2s-2,k-1].
\end{align*}
Plugging these values into~\eqref{eq:f_vect_beta_prime}, and performing numerous cancellations, we arrive at
\begin{equation}\label{eq:f_vect_spherical_poly_repeat}
\E f_k(\tilde P_{n,d}^{(d+1)/2}) = \frac{n!\pi^{k+1-n}}{(k+1)!} \sum_{\substack{s=0,1,\ldots \\ d-2s\geq k+1}} B\{n, d-2s\}(d-2s-1)^2 A[d-2s-2,k-1],
\end{equation}
for all $k\in\{0,\ldots,d-1\}$.
This completes the proof of Theorem~\ref{theo:spherical_polytope_f_vector}.
\hfill
$\Box$

\subsection{General combinatorial identities}
Now we are going to state combinatorial identities complementing Lemma~\ref{lemma:identity_A_B_1}.
\begin{lemma}\label{lemma:identity_A_B_2}
For all $n\in\N$ and all $k\in \{1,2,\ldots,n-1\}$ we have
\begin{align}
\sum_{\substack{s=0,1,\ldots \\ n-2s\geq k}} B\{n,n-2s\}  (n-2s-1)^2 A[n-2s-2,k-2] &= \frac{\pi^{n-k}}{(n-k)!},\label{eq:identity_2A}\\
\sum_{\substack{s=0,1,\ldots \\ n-2s\geq k+1}} B\{n,n-2s-1\}  (n-2s-2)^2 A[n-2s-3,k-2] &= \frac{\pi^{n-k}}{(n-k)!}. \label{eq:identity_2B}
\end{align}
\end{lemma}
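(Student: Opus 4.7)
The plan is to derive both identities directly from the Gauss-Bonnet-type relations of Proposition~\ref{prop:relations}, specialized to $\beta=n/2$, by substituting the explicit evaluations provided by Lemma~\ref{lem:I_n_k_B_n_k} and Proposition~\ref{prop:internal_explicit_no_parity}. The decisive observation is that Proposition~\ref{prop:internal_explicit_no_parity} is now available \emph{without} any parity restriction on $k$, so the obstruction that prevented the induction in the proof of Lemma~\ref{lemma:identity_A_B_1} from covering odd $k$ has been removed.

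For identity~\eqref{eq:identity_2A} I would invoke~\eqref{eq:relation_U_V_simple} (the specialization of~\eqref{eq:relation_U_V} to $\beta=n/2$) and substitute the closed form for $\tilde\II_{n,n-2s}(1)$ from Lemma~\ref{lem:I_n_k_B_n_k} together with the closed form for $\tilde\JJ_{n-2s,k}((n-2s)/2)$ from Proposition~\ref{prop:internal_explicit_no_parity}. The chain of cancellations is literally the one already performed in Section~\ref{subsec:B_n_k_def} to pass from~\eqref{eq:need_to_prove1} to~\eqref{eq:need_to_prove2}. Because the only failure point of the induction in Lemma~\ref{lemma:identity_A_B_1} was the base case $k=1$ (which required the ``ugly'' values $A[n-2s,-1]$ now supplied by Proposition~\ref{prop:internal_explicit_no_parity}), the identity~\eqref{eq:identity_2A} follows for all $k\in\{1,\ldots,n-1\}$.

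For identity~\eqref{eq:identity_2B} I would instead use the second Gauss-Bonnet relation~\eqref{eq:relation_U_V_one_more} with $\beta=n/2$. With this choice, $2\beta-n+1=1$ and $\beta-s-\tfrac12=(n-2s-1)/2$, so the relation reads
\begin{equation*}
\sum_{\substack{s=0,1,\ldots\\ n-2s-1\geq k}} \tilde\II_{n,n-2s-1}(1)\, \tilde\JJ_{n-2s-1,k}\!\left(\tfrac{n-2s-1}{2}\right) = \tfrac12 \binom{n}{k}.
\end{equation*}
Applying Lemma~\ref{lem:I_n_k_B_n_k} to the first factor (producing a constant $\tilde c_{1,(n-2s)/2}$ and a factor $1/(n-2s-1)$) and Proposition~\ref{prop:internal_explicit_no_parity} with $n$ replaced by $n-2s-1$ to the second factor (producing a constant $1/\tilde c_{1,(n-2s)/2}$ and a factor $n-2s-1$), the $\tilde c$'s cancel, the factors $n-2s-1$ cancel, and the powers of $\pi$ combine to $\pi^{k-n}$. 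Collecting the surviving factor $n!/(2k!)\cdot\pi^{k-n}$ against $\tfrac12\binom{n}{k}$ yields~\eqref{eq:identity_2B} after trivial rearrangement.

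I do not expect serious obstacles; the argument is essentially bookkeeping atop machinery already in place. The single subtle point is the boundary term in~\eqref{eq:identity_2B} at which $n-2s=2$, where $A[-1,-1]$ appears with a coefficient of $0^2$. This is handled by the convention $0^2 A[-1,-1] = 2/\pi$ fixed in Remark~\ref{rem:convention2}, which is exactly the convention that makes Proposition~\ref{prop:internal_explicit_no_parity} consistent with $\tilde\JJ_{1,1}(1/2)=1$; a direct check at this corner confirms that the identity holds there as well.
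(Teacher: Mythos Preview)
Your proposal is correct and follows essentially the same approach as the paper: specialize Proposition~\ref{prop:relations} to $\beta=n/2$, then insert the explicit expressions from Lemma~\ref{lem:I_n_k_B_n_k} and Proposition~\ref{prop:internal_explicit_no_parity} into~\eqref{eq:relation_U_V} and~\eqref{eq:relation_U_V_one_more} to obtain~\eqref{eq:identity_2A} and~\eqref{eq:identity_2B} after cancellation. Your handling of the boundary term via the convention in Remark~\ref{rem:convention2} is also in line with the paper.
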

\begin{remark}
The identities~\eqref{eq:identity_2A}, respectively, \eqref{eq:identity_2B}, are not true for $k=n$ because then the sum on the left-hand side is equal to $2$, respectively, $0$, whereas the right-hand side equals $1$ in both cases.
\end{remark}
\begin{remark}
Adding and subtracting~\eqref{eq:identity_2A} and~\eqref{eq:identity_2B} we obtain the identities
\begin{align}
\sum_{m=k}^n B\{n,m\}  (m-1)^2 A[m-2,k-2] &= 2\cdot  \frac{\pi^{n-k}}{(n-k)!},\label{eq:identity_2C}\\
\sum_{m=k}^n (-1)^{n-m} B\{n,m\}  (m-1)^2 A[m-2,k-2] &= 2 \cdot \delta_{nk}, \label{eq:identity_2D}
\end{align}
for all $n\in\N$ and $k\in \{1,\ldots,n\}$, where $\delta_{nk}$ is the Kronecker delta function.
\end{remark}
\begin{proof}[Proof of Lemma~\ref{lemma:identity_A_B_2}]
Proposition~\ref{prop:relations} with $\beta=\frac n2$ states that
\begin{align}
&\sum_{\substack{s=0,1,\ldots\\ n-2s\geq k}} \tilde \II_{n,n-2s}(1) \tilde \JJ_{n-2s,k} \left(\frac {n-2s} 2\right) = \frac 12 \binom nk,\label{eq:relation_U_V_rep}\\
&\sum_{\substack{s=0,1,\ldots\\ n-2s-1\geq k}} \tilde \II_{n,n-2s-1}(1) \tilde \JJ_{n-2s-1,k} \left(\frac {n-1-2s}2\right) = \frac 12 \binom nk,\label{eq:relation_U_V_one_more_rep}
\end{align}
for all $n\in \N$ and $k\in \{1,\ldots,n-1\}$. By Lemma~\ref{lem:I_n_k_B_n_k} and Proposition~\ref{prop:internal_explicit_no_parity},
\begin{align*}
\tilde \II_{n,n-2s}(1)
&=
\frac {n!}{n-2s}  \tilde c_{1, \frac{n-2s+1}{2}} \pi^{-2s} B\{n, n-2s\},\\
\tilde \JJ_{n-2s,k}\left(\frac {n-2s}2\right)
&=
\frac{\pi^{k-n+2s}}{k!}\cdot \frac{n-2s}{2 \tilde c_{1, \frac{n-2s+1}{2}}} \cdot (n-2s-1)^2 A[n-2s-2,k-2].
\end{align*}
Plugging these values into~\eqref{eq:relation_U_V_rep} and performing cancellations, we arrive at~\eqref{eq:identity_2A}. Starting with~\eqref{eq:relation_U_V_one_more_rep} and arguing in a similar way yields~\eqref{eq:identity_2B}.
\end{proof}


\subsection{Proof of Proposition~\ref{prop:f_vect_of_d+2_points}}
Let us prove the first identity. By Theorem~\ref{theo:spherical_polytope_f_vector} with $n=d+2$, for all $k\in \{0,\ldots,d-1\}$ we have
$$
\E f_{k} (C_{d+2}\cap \bS^d_+) = \frac{(d+2)!\pi^{k-d-1}}{(k+1)!} \sum_{\substack{s=0,1,\ldots \\ d-2s\geq k+1}} B\{d+2, d-2s\}(d-2s-1)^2 A[d-2s-2,k-1].
$$
Lemma~\ref{lemma:identity_A_B_2} with $n=d+2$ and $k$ replaced by $k+1$ states that
$$
\sum_{\substack{s=0,1,\ldots \\ d+2-2s\geq k+1}} B\{d+2,d+2-2s\}  (d+2-2s-1)^2 A[d-2s,k-1] = \frac{\pi^{d+1-k}}{(d+1-k)!}.
$$
Since the sums in the above two equations differ by just one term, we can write
\begin{align*}
\E f_{k} (C_{d+2}\cap \bS^d_+)
&=
\frac{(d+2)!\pi^{k-d-1}}{(k+1)!}
\left(\frac{\pi^{d+1-k}}{(d+1-k)!} - B\{d+2,d+2\}(d+1)^2 A[d,k-1]\right)\\
&=
\binom{d+2}{k+1} - \frac{(d+2)!\pi^{k-d-1}}{(k+1)!}B\{d+2,d+2\}(d+1)^2 A[d,k-1]\\
&=
\binom{d+2}{k+1} - \frac{(d+2)\pi^{k-d-1}}{(k+1)!} \frac{\sqrt \pi\, \Gamma\left(\frac {d+2}2\right)}{\Gamma\left(\frac{d+3}{2}\right)}  (d+1)^2 A[d,k-1]
\end{align*}
upon using the formula
$$
B\{d+2,d+2\} = \frac {1}{(d+1)!} \int_0^\pi (\sin x)^{d+1} \dint x = \frac {1}{(d+1)!} \frac{\sqrt \pi\, \Gamma\left(\frac {d+2}2\right)}{\Gamma\left(\frac{d+3}{2}\right)}.
$$
The proof of the first identity is complete.

The proof of the second identity is similar.  By Theorem~\ref{theo:spherical_polytope_f_vector} with $n=d+3$, for all $k\in \{0,\ldots,d-1\}$ we have
$$
\E f_{k} (C_{d+3}\cap \bS^d_+) = \frac{(d+3)!\pi^{k-d-2}}{(k+1)!} \sum_{\substack{s=0,1,\ldots \\ d-2s\geq k+1}} B\{d+3, d-2s\}(d-2s-1)^2 A[d-2s-2,k-1].
$$
Lemma~\ref{lemma:identity_A_B_2} with $n=d+3$ and $k$ replaced by  $k+1$ states that
$$
\sum_{\substack{s=0,1,\ldots \\ d+3-2s\geq k+2}} B\{d+3,d+2-2s\}  (d+1-2s)^2 A[d-2s,k-1] = \frac{\pi^{d+2-k}}{(d+2-k)!}.
$$
Again, the sums in the above two equations differ by just one term, so that we can write
\begin{align*}
\E f_{k} (C_{d+3}\cap \bS^d_+)
&=
\frac{(d+3)!\pi^{k-d-2}}{(k+1)!}
\left(\frac{\pi^{d+2-k}}{(d+2-k)!} - B\{d+3,d+2\}(d+1)^2 A[d,k-1]\right)\\
&=
\binom{d+3}{k+1} - \frac{(d+3)!\pi^{k-d-2}}{(k+1)!} B\{d+3,d+2\} (d+1)^2 A[d,k-1]\\
&=
\binom{d+3}{k+1} - \frac{(d+3) \pi^{k-d-1}}{(k+1)!} \frac{\sqrt \pi\, \Gamma\left(\frac {d+4}2\right)}{\Gamma\left(\frac{d+3}{2}\right)}  (d+1)^2 A[d,k-1]
\end{align*}
upon using the formula
$$
B\{d+3,d+2\}
=
\frac {1}{(d+1)!} \int_0^\pi (\sin x)^{d+1} x \dint x
=
\frac {1}{(d+1)!} \frac{\pi^{3/2}\, \Gamma\left(\frac {d+2}2\right)}{2\Gamma\left(\frac{d+3}{2}\right)}.
$$
This completes the proof of the second identity.
\hfill $\Box$

\subsection{Proof of Theorem~\ref{theo:expected_angle}}
Recall that $C_{n+1}$ is defined as the positive hull of the points $U_1,\ldots, U_{n+1}$ that are independent and uniformly distributed on the half-sphere $\bS_{+}^d$. For every $k\in \{1,\ldots,n+1\}$, the point $U_k$ is not a vertex of $C_{n+1}\cap \bS^d_+$ if and only if it is contained in the cone generated by the remaining points $U_i$, $i\in \{1,\ldots,n+1\}\backslash \{k\}$. Hence,
\begin{equation}\label{eq:efron}
(n+1) - \E f_0(C_{n+1}\cap \bS_+^d) =  (n+1) \P[U_{n+1} \in \pos (U_1,\ldots,U_n)] =  2 (n+1) \E \alpha (C_{n}).
\end{equation}
This spherical Efron-type identity was obtained in~\cite[Equation~(26)]{barany_etal} and is a special case of the more general identity proved in~\cite[Theorem 2.7]{convex_hull_sphere}.
It follows from~\eqref{eq:efron} that
\begin{align}
\E \alpha(C_n)
&=
\frac 12 \left(1 - \frac 1 {n+1} \E f_0(C_{n+1}\cap \bS_+^d) \right)\notag \\
&=
\frac 12 \Big(1 - \frac{n!}{\pi^{n}} \sum_{\substack{s=0,1,\ldots \\ d-2s\geq 1}} B\{n+1, d-2s\}
(d-2s-1)^2 A[d-2s-2,-1]\Big), \label{eq:alpha_C_n_proof}
\end{align}
where in the second equality we applied Theorem~\ref{theo:spherical_polytope_f_vector} with $k=0$.  Lemma~\ref{lemma:identity_A_B_2} with $n$ replaced by $n+1$ and $k=1$ implies that
\begin{multline*}
\frac{n!}{\pi^n} \sum_{\substack{m\in \{1,\ldots,n+1\} \\ m\not \equiv n \Mod{2}}} B\{n+1,m\}(m-1)^2 A[m-2,-1] \\
=
\frac{n!}{\pi^n}\sum_{\substack{m\in \{1,\ldots,n+1\} \\ m \equiv n \Mod{2}}} B\{n+1,m\} (m-1)^2 A[m-2,-1]
=
1.
\end{multline*}
Replacing the term $1$ in~\eqref{eq:alpha_C_n_proof} by one of the above sums depending on the parity of $d$,  we can write~\eqref{eq:alpha_C_n_proof} in the form
$$
\E \alpha(C_n) = \frac{n!}{2\pi^{n}} \sum_{\substack{m\in \{d+2,\ldots,n+1\} \\ m \equiv d \Mod{2}}} B\{n+1, m\}
(m-1)^2 A[m-2,-1],
$$
which completes the proof.
\hfill $\Box$

\section*{Acknowledgements}
Supported by the German Research Foundation under Germany's Excellence Strategy  EXC 2044 -- 390685587, Mathematics M\"unster: Dynamics - Geometry - Structure. The author is grateful to Christoph Th\"ale for useful comments, as well as to an unknown referee for an extremely thorough handling of the manuscript and numerous useful suggestions.


\bibliography{poisson_zero_bib}
\bibliographystyle{plainnat}
\addresseshere

\newpage

\appendix
\renewcommand{\sectionname}{}

\section{Tables}\label{sec:table}

\begin{table}[h!]
\begin{center}
\begin{tabular}{||c||l||}
\hhline{|==|}
\parbox[0pt][3em][c]{0cm}{} $d$ & $\E f_0(\mathcal Z_{d}),\; \E f_1(\mathcal Z_{d}),\; \ldots,\; \E f_{d-1}(\mathcal Z_{d})$\\
\hhline{||==||}
\parbox[0pt][3em][c]{0cm}{}
$1$ & $2$\\
\hhline{||--||}
\parbox[0pt][3em][c]{0cm}{} $2$ & $\frac{\pi ^2}{2},\frac{\pi ^2}{2}$\\
\hhline{||--||}
\parbox[0pt][3em][c]{0cm}{} $3$ & $\frac{4 \pi ^2}{3},2 \pi ^2,2 \left(1+\frac{\pi ^2}{3}\right)$\\
\hhline{||--||}
\parbox[0pt][3em][c]{0cm}{} $4$ & $\frac{3 \pi ^4}{8},\frac{3 \pi ^4}{4},5 \pi ^2,5 \pi ^2-\frac{3 \pi ^4}{8}$ \\
\hhline{||--||}
\parbox[0pt][3em][c]{0cm}{} $5$ & $\frac{16 \pi ^4}{15},\frac{8 \pi ^4}{3},\frac{4}{9} \pi ^2 \left(15+4 \pi ^2\right),10 \pi ^2,2+\frac{10 \pi ^2}{3}-\frac{8 \pi ^4}{45}$\\
\hhline{||--||}
\parbox[0pt][3em][c]{0cm}{} $6$ & $\frac{5 \pi ^6}{16},\frac{15 \pi ^6}{16},\frac{259 \pi ^4}{24},\frac{1}{48} \left(1036 \pi ^4-75 \pi ^6\right),\frac{35 \pi ^2}{2},\frac{1}{48} \pi ^2 \left(840-518 \pi ^2+45 \pi ^4\right)$ \\
\hhline{||--||}
\parbox[0pt][4em][c]{0cm}{} $7$ & $\frac{32 \pi ^6}{35},\frac{16 \pi ^6}{5},\frac{4}{15} \pi ^4 \left(49+12 \pi ^2\right),\frac{98 \pi ^4}{3},\frac{4}{45} \pi ^2 \left(210+245 \pi ^2-12 \pi ^4\right),28 \pi ^2,2+\frac{28 \pi ^2}{3}-\frac{98 \pi ^4}{45}+\frac{16 \pi ^6}{105}$\\
\hhline{||--||}
\parbox[0pt][4em][c]{0cm}{} $8$ & \makecell[l]{$\frac{35 \pi ^8}{128},\frac{35 \pi ^8}{32},\frac{3229 \pi ^6}{180},\frac{3229 \pi ^6}{60}-\frac{245 \pi ^8}{64},\frac{329 \pi ^4}{4},\frac{329 \pi ^4}{2}-\frac{3229 \pi ^6}{36}+\frac{245 \pi ^8}{32},42 \pi ^2,$
\\
$42 \pi ^2-\frac{329 \pi ^4}{4}+\frac{3229 \pi ^6}{60}-\frac{595 \pi ^8}{128}$} \\
\hhline{||--||}
\parbox[0pt][4em][c]{0cm}{} $9$ & \makecell[l]{$\frac{256 \pi ^8}{315},\frac{128 \pi ^8}{35},\frac{32}{315} \pi ^6 \left(205+48 \pi ^2\right),\frac{656 \pi ^6}{9},\frac{364 \pi ^4}{5}+\frac{656 \pi ^6}{9}-\frac{256 \pi ^8}{75},182 \pi ^4,$\\
$40 \pi ^2+\frac{364 \pi ^4}{3}-\frac{656 \pi ^6}{27}+\frac{512 \pi ^8}{315},60 \pi ^2,2+20 \pi ^2-\frac{182 \pi ^4}{15}+\frac{656 \pi ^6}{189}-\frac{128 \pi ^8}{525}$}\\
\hhline{||--||}
\parbox[0pt][4em][c]{0cm}{}
$10$ & \makecell[l]{$\frac{63 \pi ^{10}}{256},\frac{315 \pi ^{10}}{256},\frac{117469 \pi ^8}{4480},\frac{\pi ^8 \left(469876-33075 \pi ^2\right)}{4480},\frac{17281 \pi ^6}{72},\frac{\pi ^6 \left(1382480-704814 \pi ^2+59535 \pi ^4\right)}{1920},\frac{1463 \pi ^4}{4},$\\
$\frac{1463 \pi ^4}{2}-\frac{86405 \pi ^6}{72}+\frac{117469 \pi ^8}{160}-\frac{16065 \pi ^{10}}{256},\frac{165 \pi ^2}{2},\frac{165 \pi ^2}{2}-\frac{1463 \pi ^4}{4}+\frac{17281 \pi ^6}{24}-\frac{1996973 \pi ^8}{4480}+\frac{9765 \pi ^{10}}{256}$}\\
\hhline{|==|}
\end{tabular}
\caption{Expected $f$-vector of the Poisson zero polytope in dimensions $d\in \{1,\ldots,10$\}}
\label{tab:f_vect_Poisson}
\end{center}
\end{table}

\newpage
\begin{table}[h]
\begin{center}
\begin{tabular}{||c||c|c|c|c|c|c|c|c||}
\hhline{|=========|}
\parbox[0pt][3em][c]{0cm}{}
$A[n,k]$& $k=0$ & $k=2$ & $k=4$ & $k=6$ & $k=8$ & $k=10$ & $k=12$ & $k=14$\\
\hhline{||=|=|=|=|=|=|=|=|=||}
\parbox[0pt][3em][c]{0cm}{}
 $n=1$ & $1$ & $0$ & $0$ & $0$ & $0$ & $0$ & $0$ & $0$ \\
\hhline{||---------||}
\parbox[0pt][3em][c]{0cm}{}
 $n=2$ & $1$ & $1$ & $0$ & $0$ & $0$ & $0$ & $0$ & $0$ \\
\hhline{||---------||}
\parbox[0pt][3em][c]{0cm}{}
 $n=3$ & $1$ & $4$ & $0$ & $0$ & $0$ & $0$ & $0$ & $0$ \\
 \hhline{||---------||}
\parbox[0pt][3em][c]{0cm}{}
 $n=4$ & $1$ & $10$ & $9$ & $0$ & $0$ & $0$ & $0$ & $0$ \\
 \hhline{||---------||}
\parbox[0pt][3em][c]{0cm}{}
 $n=5$ & $1$ & $20$ & $64$ & $0$ & $0$ & $0$ & $0$ & $0$ \\
 \hhline{||---------||}
\parbox[0pt][3em][c]{0cm}{}
 $n=6$ & $1$ & $35$ & $259$ & $225$ & $0$ & $0$ & $0$ & $0$ \\
 \hhline{||---------||}
\parbox[0pt][3em][c]{0cm}{}
 $n=7$ & $1$ & $56$ & $784$ & $2304$ & $0$ & $0$ & $0$ & $0$ \\
 \hhline{||---------||}
\parbox[0pt][3em][c]{0cm}{}
 $n=8$ & $1$ & $84$ & $1974$ & $12916$ & $11025$ & $0$ & $0$ & $0$ \\
 \hhline{||---------||}
\parbox[0pt][3em][c]{0cm}{}
 $n=9$ & $1$ & $120$ & $4368$ & $52480$ & $147456$ & $0$ & $0$ & $0$ \\
 \hhline{||---------||}
\parbox[0pt][3em][c]{0cm}{}
 $n=10$& $1$ & $165$ & $8778$ & $172810$ & $1057221$ & $893025$ & $0$ & $0$ \\
 \hhline{||---------||}
\parbox[0pt][3em][c]{0cm}{}
 $n=11$& $1$ & $220$ & $16368$ & $489280$ & $5395456$ & $14745600$ & $0$ & $0$ \\
 \hhline{||---------||}
\parbox[0pt][3em][c]{0cm}{}
 $n=12$& $1$ & $286$ & $28743$ & $1234948$ & $21967231$ & $128816766$ & $108056025$ & $0$ \\
 \hhline{||---------||}
\parbox[0pt][3em][c]{0cm}{}
 $n=13$& $1$ & $364$ & $48048$ & $2846272$ & $75851776$ & $791691264$ & $2123366400$ & $0$ \\
 \hhline{||---------||}
\parbox[0pt][3em][c]{0cm}{}
 $n=14$& $1$ & $455$ & $77077$ & $6092515$ & $230673443$ & $3841278805$ & $21878089479$ & $18261468225$ \\
\hhline{|=========|}
\end{tabular}
\caption{The values of $A[n,k]$ for $n\in \{1,\ldots, 14\}$ and even $k\in \{0,2,4,\ldots, 14\}$.}
\label{tab:A_n_k_0}
\end{center}
\end{table}

\newpage

\begin{table}[h]
\begin{center}
\begin{tabular}{||c||c|c|c|c|c|c||}
\hhline{|=======|}
\parbox[0pt][3em][c]{0cm}{}
$A[n,k]$& $k=0$ & $k=1$ & $k=2$ & $k=3$ & $k=4$ & $k=5$ \\
\hhline{||==|=|=|=|=|=||}
\parbox[0pt][3em][c]{0cm}{}
$n=0$ &   $1$ & $0$ & $0$ & $0$ & $0$ & $0$ \\
\hhline{||--|-|-|-|-|-||}
\parbox[0pt][3em][c]{0cm}{}
$n=1$ &   $1$ & $\frac{2}{\pi }$ & $0$ & $0$ & $0$ & $0$ \\
\hhline{||--|-|-|-|-|-||}
\parbox[0pt][3em][c]{0cm}{}
$n=2$ & $1$ & $\frac{\pi }{2}$ & $1$ & $0$ & $0$ & $0$ \\
\hhline{||--|-|-|-|-|-||}
\parbox[0pt][3em][c]{0cm}{}
$n=3$ &  $1$ & $\frac{2}{\pi }+\frac{2 \pi }{3}$ & $4$ & $\frac{8}{\pi }$ & $0$ & $0$  \\
\hhline{||--|-|-|-|-|-||}
\parbox[0pt][3em][c]{0cm}{}
$n=4$ &  $1$ & $5 \pi -\frac{3 \pi ^3}{8}$ & $10$ & $\frac{9 \pi }{2}$ & $9$ & $0$  \\
\hhline{||--|-|-|-|-|-||}
\parbox[0pt][3em][c]{0cm}{}
$n=5$ &  $1$ & $\frac{2}{\pi }+\frac{10 \pi }{3}-\frac{8 \pi ^3}{45}$ & $20$ & $\frac{40}{\pi }+\frac{32 \pi }{3}$ & $64$ & $\frac{128}{\pi }$ \\
\hhline{||--|-|-|-|-|-||}
\parbox[0pt][3em][c]{0cm}{}
$n=6$ &   $1$ & $\frac{1}{48} \pi  \left(840-518 \pi ^2+45 \pi ^4\right)$ & $35$ & $\frac{259 \pi }{2}-\frac{75 \pi ^3}{8}$ & $259$ & $\frac{225 \pi }{2}$\\
\hhline{||--|-|-|-|-|-||}
\parbox[0pt][3em][c]{0cm}{}
$n=7$ &  $1$ & $\frac{2}{\pi }+\frac{28 \pi }{3}-\frac{98 \pi ^3}{45}+\frac{16 \pi ^5}{105}$ & $56$ & $\frac{112}{\pi }+\frac{392 \pi }{3}-\frac{32 \pi ^3}{5}$ & $784$ & $\frac{1568}{\pi }+384 \pi$\\
\hhline{||--|-|-|-|-|-||}
\parbox[0pt][3em][c]{0cm}{}
$n=8$ &  $1$ & $42 \pi -\frac{329 \pi ^3}{4}+\frac{3229 \pi ^5}{60}-\frac{595 \pi ^7}{128}$ & $84$ & $987 \pi -\frac{3229 \pi ^3}{6}+\frac{735 \pi ^5}{16}$ & $1974$ & $6458 \pi -\frac{3675 \pi ^3}{8}$ \\
\hhline{|=======|}
\end{tabular}
\caption{The values of $A[n,k]$ for $n\in \{0,\ldots, 8\}$ and $k\in \{0,\ldots,5\}$.}
\label{tab:A_n_k}
\end{center}
\end{table}

\newpage

\begin{table}[h]
\begin{center}
\begin{tabular}{||c||c|c|c|c||}
\hhline{|=====|}
\parbox[0pt][3em][c]{0cm}{}
$B\{n,k\}$&$k=1$ & $k=2$ & $k=3$ & $k=4$\\
\hhline{||=||=|=|=|=||}
\parbox[0pt][3em][c]{0cm}{}
$n=1$ & $\pi$  & $0$ & $0$ & $0$ \\
\hhline{||-----||}
\parbox[0pt][3em][c]{0cm}{}
$n=2$ & $\frac{\pi ^2}{2}$ & $2$ & $0$ & $0$ \\
\hhline{||-----||}
\parbox[0pt][3em][c]{0cm}{}
$n=3$ & $\frac{\pi ^3}{6}$ & $\pi$  & $\frac{\pi }{4}$ & $0$ \\
\hhline{||-----||}
\parbox[0pt][3em][c]{0cm}{}
$n=4$ & $\frac{\pi ^4}{24}$ & $\frac{1}{2} \left(-4+\pi ^2\right)$ & $\frac{\pi ^2}{8}$ & $\frac{2}{9}$ \\
\hhline{||-----||}
\parbox[0pt][3em][c]{0cm}{}
$n=5$ & $\frac{\pi ^5}{120}$ & $\frac{1}{6} \pi  \left(-6+\pi ^2\right)$ & $\frac{1}{48} \pi  \left(-3+2 \pi ^2\right)$ & $\frac{\pi }{9}$ \\
\hhline{||-----||}
\parbox[0pt][3em][c]{0cm}{}
$n=6$ & $\frac{\pi ^6}{720}$ & $\frac{1}{24} \left(48-12 \pi ^2+\pi ^4\right)$ & $\frac{1}{96} \pi ^2 \left(-3+\pi ^2\right)$ & $\frac{1}{162} \left(-40+9 \pi ^2\right)$ \\
\hhline{||-----||}
\parbox[0pt][3em][c]{0cm}{}
$n=7$ & $\frac{\pi ^7}{5040}$ & $\pi -\frac{\pi ^3}{6}+\frac{\pi ^5}{120}$ & $\frac{1}{960} \pi  \left(15-10 \pi ^2+2 \pi ^4\right)$ & $\frac{1}{162} \pi  \left(-20+3 \pi ^2\right)$ \\
\hhline{||-----||}
\parbox[0pt][3em][c]{0cm}{}
$n=8$ &$\frac{\pi ^8}{40320}$ & $-2+\frac{\pi ^2}{2}-\frac{\pi ^4}{24}+\frac{\pi ^6}{720}$ & $\frac{\pi ^2 \left(45-15 \pi ^2+2 \pi ^4\right)}{5760}$ & $\frac{1456-360 \pi ^2+27 \pi ^4}{5832}$ \\
\hhline{||-----||}
\parbox[0pt][3em][c]{0cm}{}
$n=9$ & $\frac{\pi ^9}{362880}$ & $\frac{\pi  \left(-5040+840 \pi ^2-42 \pi ^4+\pi ^6\right)}{5040}$ & $\frac{\pi  \left(-315+210 \pi ^2-42 \pi ^4+4 \pi ^6\right)}{80640}$ & $\frac{\pi  \left(3640-600 \pi ^2+27 \pi ^4\right)}{29160}$ \\
\hhline{||-----||}
\parbox[0pt][3em][c]{0cm}{}
$n=10$ & $\frac{\pi ^{10}}{3628800}$ & $2-\frac{\pi ^2}{2}+\frac{\pi ^4}{24}-\frac{\pi ^6}{720}+\frac{\pi ^8}{40320}$ & $\frac{\pi ^2 \left(-315+105 \pi ^2-14 \pi ^4+\pi ^6\right)}{161280}$ & $\frac{-131200+32760 \pi ^2-2700 \pi ^4+81 \pi ^6}{524880}$ \\
\hhline{|=====|}
\end{tabular}
\caption{The values of $B\{n,k\}$ for $n\in \{1,\ldots, 10\}$ and $k\in \{1,\ldots,4\}$.}
\label{tab:B_n_k}
\end{center}
\end{table}

\newpage

\begin{table}[h]
\begin{center}
\begin{tabular}{||c||c||}
\hhline{|==|}
\parbox[0pt][3em][c]{0cm}{} $d$ & $P(d)$\\
\hhline{||==||}
\parbox[0pt][3em][c]{0cm}{} $1$ & $1$\\
\hhline{||--||}
\parbox[0pt][3em][c]{0cm}{} $2$ & $-2 + \frac{24}{\pi^2}$\\
\hhline{||--||}
\parbox[0pt][3em][c]{0cm}{} $3$ & $\frac{5}{\pi^2}-\frac{1}{3}$\\
\hhline{||--||}
\parbox[0pt][3em][c]{0cm}{} $4$ & $\frac{80}{\pi^4}+6-\frac{200}{3 \pi^2}$\\
\hhline{||--||}
\parbox[0pt][3em][c]{0cm}{} $5$ & $\frac{1}{3}+\frac{105}{8 \pi ^4}-\frac{35}{8 \pi ^2}$\\
\hhline{||--||}
\parbox[0pt][3em][c]{0cm}{} $6$ & $-\frac{1568}{3 \pi ^4}+\frac{896}{5 \pi ^6}-34+\frac{29008}{75 \pi ^2}$ \\
\hhline{||--||}
\parbox[0pt][4em][c]{0cm}{} $7$ & $-\frac{3}{5}-\frac{49}{2 \pi ^4}+\frac{105}{4 \pi ^6}+\frac{49}{6 \pi ^2}$\\
\hhline{||--||}
\parbox[0pt][4em][c]{0cm}{} $8$ & $\frac{27072}{5 \pi ^4}-\frac{2304}{\pi ^6}+\frac{2304}{7 \pi ^8}+310-\frac{878288}{245 \pi ^2}$ \\
\hhline{||--||}
\parbox[0pt][4em][c]{0cm}{} $9$ &  $\frac{5 \left(3465-6930 \pi ^2+6006 \pi ^4-1804 \pi ^6+128 \pi ^8\right)}{384 \pi ^8}$\\
\hhline{||--||}
\parbox[0pt][4em][c]{0cm}{} $10$ &
$\frac{3548160-48787200 \pi ^2+259547904 \pi ^4-517047520 \pi ^6+320455432 \pi ^8-27425790 \pi ^{10}}{6615 \pi ^{10}}$
\\
\hhline{|==|}
\end{tabular}
\caption{The first few values of the probability $P(d)$ defined in Section~\ref{subsec:sylvester}.}
\label{tab:sylvester}
\end{center}
\end{table}

\end{document}